\documentclass{article}
\usepackage{graphicx} 
\usepackage{color}
\usepackage{amsthm}
\usepackage{graphicx}
\usepackage{amsmath}
\usepackage{amsfonts}
\usepackage{hyperref}
\usepackage{todonotes}
\usepackage{pgf,tikz,calc}
\usepackage{rotating}
   \usepackage{tabu}
\usepackage[affil-it]{authblk} 

\usetikzlibrary{math,angles,quotes,trees, positioning}
\usepackage{slashbox}
\usepackage{float}
\usepackage[margin=2cm]{geometry}
\usepackage{amssymb}
\newtheorem{definition}{Definition}
\newtheorem{question}{Question}
\newtheorem{conj}{Conjecture}
\newtheorem{theorem}{Theorem}
\newtheorem{prop}[theorem]{Proposition}
\newtheorem{corollary}{Corollary}
\newtheorem{observation}{Observation}

\usepackage{pgfplots}
\pgfplotsset{compat=1.15}
\usepackage{mathrsfs}
\usetikzlibrary{arrows}

\title{Packing coloring of graphs with long paths}

\author[1]{Hanna Furma\'nczyk \thanks{Corresponding author: hanna.furmanczyk@ug.edu.pl}}
\author[2]{Didem G\"{o}z\"{u}pek \thanks{didem.gozupek@gtu.edu.tr}}
\author[3]{Sibel \"{O}zkan \thanks{s.ozkan@gtu.edu.tr}}

\affil[1]{Institute of Informatics, Faculty of Mathematics, Physics and Informatics, University of Gda\'nsk, Gda\'nsk, Poland}
\affil[2]{Department of Computer Engineering, Gebze Technical University, Türkiye}
\affil[3]{Department of Mathematics, Gebze Technical University, Türkiye}

\date{}

\begin{document}

\maketitle

\begin{abstract}

The packing coloring problem has diverse applications, including frequency assignment in wireless networks, resource distribution and facility location in smart cities and post-disaster management, as well as in biological diversity. Formally, the packing coloring of a graph is a vertex coloring in which any two vertices assigned color $i$ are at a distance of at least $i+1$, and the smallest number of colors admitting such a coloring is called the packing chromatic number. Goddard et al.~\cite{goddard2008broadcast} showed that the packing chromatic numbers of paths and cycles are at most 3 and 4, respectively. In this paper, we introduce \emph{path-aligned graph products}, a natural extension of paths with unbounded diameter. We extend the result of~\cite{goddard2008broadcast} by proving that the packing chromatic number remains bounded by a constant for several families of path-aligned cycle and path-aligned complete products. We then investigate the packing chromatic number of caterpillars, another class of graphs characterized by long induced paths. Sloper~\cite{sloper} proved that the packing chromatic number of caterpillars is at most 7; here, we provide a complete structural characterization of caterpillars with packing chromatic number at most 3. Finally, several open research questions are posed.

\end{abstract}


{\small{\textbf{Keywords:} Optimization, frequency assignment, graph theory, packing coloring, graph products}}

\section{Introduction}

A proper coloring of the vertices of a graph $G=(V(G), E(G))$ refers to assigning colors (integers) to the vertices in a way that ensures adjacent vertices receive different colors. The \emph{distance} $d_{G}(u, v)$ between two vertices $u$ and $v$ of a graph $G$ is defined as the number of edges in the shortest path connecting them. The \emph{diameter} of a graph $G$ is the maximum distance between any two of its vertices and it is denoted by $diam(G)$. Given a positive integer $i$, an $i$-\emph{packing} in $G$ is a subset $X \subseteq V(G)$ such that the distance between any two distinct vertices is greater than $i$. The \emph{packing coloring} of a graph refers to a proper coloring of its vertices, where color class $V_i$ forms $i$-packing, for any $i\in[k]=\{1, \ldots, k\}$. In other words, 
vertices assigned color $i$ must have a distance of at least $i+1$ from each other. The smallest number of colors admitting a packing coloring of a graph $G$ is called the \emph{packing chromatic number} of $G$ and it is denoted by \emph{$\chi_p(G)$}. In other words, \emph{$\chi_p(G)$} is the smallest integer $k$ such that the vertex set of $G$ can be partitioned into sets $V_1, V_2, \ldots, V_k$, where $V_i$ is an $i$-packing for each $i \in [k]$. Such a partition corresponds to a mapping $c:V(G)\rightarrow [k]$ such that $V_i=\{u \in V(G): c(u)=i\}$. The concept of packing coloring was introduced by Goddard et al. under the name of broadcast coloring \cite{goddard2008broadcast}. The term of packing coloring was coined by Brešar et al. \cite{packing1}.

Packing coloring has widespread applications. For example, in wireless networks, two radio stations
that are assigned the same frequency have to be sufficiently apart from each other to avoid
interference. The minimum distance required is related to the power of their broadcast signals. In particular,
all stations located at vertices in the $i$-packing $V_i$ are allowed to broadcast at the same frequency with a
power that will not allow the signals to interfere at distance $i$. 

Beyond telecommunications, packing coloring has applications in several domains related to 
resource distribution and facility location. Examples include the optimal placement of 
electric vehicle charging stations and emergency services in smart cities, 
where facilities of higher priority or higher power demand must be spaced further apart 
to ensure efficiency and safety. In particular, electric vehicle charging stations can be of various types, each with different speeds of charging and infrastructure deployment costs \cite{leijon2025evcharging}, hence having different distancing requirements. Similar situation exists for emergency response units, which may have different capacities in relation to the risk level and population of the services area, thereby leading to different spacing requirements. Similar models appear in the deployment of air-quality sensors 
and emergency shelters, where the type or sensitivity of the facility determines its minimal separation 
from others of the same kind. In particular, high sensitivity air quality sensors need to be placed at proper distances to capture a comprehensive pollution map. In post-disaster management, different types of emergency shelters, such as medical, educational, and family shelters, have distinct distancing requirements. Another application of packing coloring is in biological diversity, where different
species in a certain area require different amounts of territory.


Graphs with long paths arise in numerous scenarios in urban areas. In transportation networks, long paths represent major thoroughfares, highways, or transit routes that span significant distances within a city. Long paths can also model the layout of water and gas pipelines, as well as the transmission lines that carry electricity from power plants to substations and distribution networks in electricity grids. For instance, caterpillar graphs model main roads and branching side streets.

Goddard et al.\cite{goddard2008broadcast} showed that the packing chromatic numbers of paths and cycles are at most 3 and 4, respectively. So, a natural research question is the following: Which natural extensions of paths and cycles preserve the property that the packing chromatic number remains bounded by a constant, independent of the graph diameter? In this paper, we address this research problem by introducing path-aligned graph products, a new class of graph products defined herein, and we specifically investigate path-aligned cycle products and path-aligned complete products. We prove that, in several cases, their packing chromatic numbers remain bounded by a constant. It is known that the packing coloring problem is NP-complete even for trees \cite{fiala}, which in turn implies its NP-completeness for block graphs. Since path-aligned complete product graphs constitute a subclass of block graphs, the results presented here identify a polynomial-time solvable special case within this otherwise NP-complete class. Moreover, Sloper \cite{sloper} proved that the packing chromatic number of any caterpillar is at most 7. In this work, we provide a complete structural characterization of caterpillars with packing chromatic number 3.

The remainder of this paper is organized as follows. Section \ref{ref:prelim} presents definitions and preliminary results. In Section \ref{sec:pathalignedproducts}, we first define path-aligned graph products and then study their packing chromatic number, focusing on path-aligned cycle products and path-aligned complete products. In particular, we determine the packing chromatic number of path-aligned cycle products for some specific number of vertices of the cycle lying on the path.
For complete graphs with three, four and five vertices, where at least two vertices lie on the path, we establish bounds on the packing chromatic number. Section \ref{sec:corona} turns to the corona of paths, where we provide a complete characterization of caterpillar graphs with packing chromatic number 3. 
Section \ref{sec:pathalignedproducts} and \ref{sec:corona} conclude with summarizing remarks and open research directions.

\section{Preliminaries} \label{ref:prelim}
Let $n$ be a positive integer. $P_n$, $C_n$, and $K_n$ denote a path, cycle, and complete graph on $n$ vertices, respectively. 
We will denote also the path \( P_n \) as $v_1v_2 \dots v_n$, i.e. the graph with
\( V(P_n) = \{v_1, \ldots, v_n\} \), where \( v_i v_{i+1} \in E(P_n) \) for every \( i \in \{1, \ldots, n-1\} \). 


\begin{prop}[\cite{goddard2008broadcast}]
    $\chi_{\rho}(P_n) =
\begin{cases} 
    2, & \text{for } 2 \leq n \leq 3, \\
    3, & \text{otherwise.}
\end{cases}$
\end{prop}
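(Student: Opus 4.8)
The plan is to prove matching upper and lower bounds, treating the two regimes $2 \le n \le 3$ and $n \ge 4$ separately, and in each case reducing to an explicit coloring on one side and a short impossibility argument on the other. Since any path on at least two vertices contains an edge, two adjacent vertices cannot both receive color $1$, so $\chi_{\rho}(P_n) \ge 2$ for all $n \ge 2$; this already handles the lower bound for the small cases. For $n \in \{2,3\}$ I would exhibit an explicit $2$-coloring: $c(v_1)=1, c(v_2)=2$ for $P_2$, and $c(v_1)=1, c(v_2)=2, c(v_3)=1$ for $P_3$. In the latter, the two vertices colored $1$ are at distance $2 \ge 2$, which is exactly what a $1$-packing requires, and there is only one vertex of color $2$, so the coloring is valid and $\chi_{\rho}(P_3)=2$.

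For the upper bound in the remaining case, I would give a single periodic coloring that works for every $n$, namely the pattern $1,2,1,3,1,2,1,3,\dots$ repeating with period $4$; formally $c(v_i)=1$ when $i$ is odd, $c(v_i)=2$ when $i \equiv 2 \pmod 4$, and $c(v_i)=3$ when $i \equiv 0 \pmod 4$. Verifying this is a routine check of three conditions: the color-$1$ vertices occupy the odd positions and are pairwise at distance at least $2$; the color-$2$ vertices occupy positions congruent to $2 \bmod 4$ and so are pairwise at distance at least $4 \ge 3$; and the color-$3$ vertices occupy positions congruent to $0 \bmod 4$, again at pairwise distance at least $4$. Hence $\chi_{\rho}(P_n) \le 3$ for all $n$, completing the upper bound.

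It remains to show $\chi_{\rho}(P_n) \ge 3$ for $n \ge 4$. I would first establish this for $P_4$ by ruling out every packing $2$-coloring. The key point is that a color-$1$ class is an independent set, and the maximum independent set of $P_4=v_1v_2v_3v_4$ has size $2$, so in a $2$-coloring at least two vertices must receive color $2$; but any two vertices left over after removing a size-$\le 2$ independent set are at distance at most $2 < 3$, violating the $2$-packing condition. A short case check over the possible independent sets $\{v_1,v_3\}$, $\{v_2,v_4\}$, $\{v_1,v_4\}$ confirms that none extends to a valid $2$-coloring, so $\chi_{\rho}(P_4) \ge 3$. To transfer this to all $n \ge 4$, I would invoke the fact that $P_4$ embeds as an isometric subgraph of $P_n$ (consecutive vertices of a path preserve distances), together with the monotonicity of the packing chromatic number under isometric subgraphs, giving $\chi_{\rho}(P_n) \ge \chi_{\rho}(P_4) \ge 3$. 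Combined with the upper bound this yields equality.

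The argument is short and the only genuinely delicate point is the lower bound; the main obstacle is making the $P_4$ impossibility argument fully rigorous rather than hand-waving, and explicitly justifying the isometric-subgraph monotonicity so that the single base case $P_4$ propagates cleanly to all larger $n$. The periodic coloring and the small-case colorings are purely computational once written down.
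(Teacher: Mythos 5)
Your proof is correct, but note that the paper does not prove this statement at all: it is imported verbatim from Goddard et al.\ \cite{goddard2008broadcast} as a known preliminary, so there is no in-paper argument to compare against. Your self-contained argument (periodic pattern $1\,2\,1\,3$ for the upper bound, exhaustion of maximal independent sets of $P_4$ plus subgraph monotonicity for the lower bound) is sound; the only cosmetic point is that the monotonicity $\chi_{\rho}(H)\le\chi_{\rho}(G)$ already holds for arbitrary subgraphs, since passing to a subgraph can only increase distances, so the isometric hypothesis, while certainly sufficient, is not needed.
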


\begin{prop}[\cite{goddard2008broadcast}]
    $\chi_{\rho}(C_n) =
\begin{cases}
    3, & \text{for } n=3 \text{ or } $n$ \text{ is a multiple of } 4, \\
    4, & \text{otherwise.}
\end{cases}$
\label{prop:cycles}
\end{prop}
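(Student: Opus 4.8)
The plan is to prove both bounds by exploiting that the three color classes are increasingly sparse: color $1$ must be an independent set, color $2$ a set of pairwise distance $\ge 3$, and color $3$ a set of pairwise distance $\ge 4$. First I would record the easy facts. For $n=3$ the graph is $K_3$, which needs and admits $3$ colors. For the lower bound $\chi_\rho(C_n)\ge 3$ when $n\ge 4$, note that a $2$-coloring would in particular be proper, hence (on a cycle) would force the even-cycle alternation $1,2,1,2,\dots$; but then two color-$2$ vertices sit at distance $2<3$, a contradiction. For the upper bound in the ``$3$'' case, when $4\mid n$ I would use the periodic word $(1,2,1,3)$ repeated $n/4$ times and check directly, including the wrap-around, that color $1$ occupies alternate vertices (distance $2$), while colors $2$ and $3$ each recur with period $4$ (distances $4\ge 3$ and $4\ge 4$).

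For the remaining values of $n$ (so $n\ge 5$ and $n\not\equiv 0 \pmod 4$), the target value is $4$, and the upper bound $\chi_\rho(C_n)\le 4$ I would obtain constructively: concatenate as many blocks $(1,2,1,3)$ as fit, then fill the $r\in\{1,2,3\}$ leftover vertices using a single vertex of the fresh color $4$ (which, being unique, never conflicts) together with colors $1$ and $2$; a short check of the $O(1)$ vertices near the seam shows no distance constraint is broken.

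The crux is the matching lower bound $\chi_\rho(C_n)\ge 4$ for $n\not\equiv0\pmod4$, $n\ne 3$, i.e.\ showing that a packing $3$-coloring forces $4\mid n$. I would first dispose of the small case $n=5$ by the diameter/counting bound: since $\mathrm{diam}(C_5)=2<3$, colors $2$ and $3$ can each be used at most once, so at most $2+1+1=4<5$ vertices get colored. For $n\ge 6$ I would argue structurally. Color $1$ is nonempty (otherwise adjacent vertices would use only $\{2,3\}$, forcing a monochromatic pair at distance $\le 2$), and it is independent, so it cuts the cycle into ``gaps'' of consecutive non-color-$1$ vertices, each of size $\ge 1$. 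The key lemma is that no gap has size $\ge 2$: if two adjacent vertices $u,w$ were both non-color-$1$, then their outer neighbors would have to be color $1$ (three consecutive non-$1$'s would contain a monochromatic pair at distance $2$), producing a window $1,x,y,1$ with $\{x,y\}=\{2,3\}$; but then the vertex immediately beyond the window on the appropriate side has no legal color --- if $(x,y)=(2,3)$ the left outer vertex would clash with the color-$2$ at distance $2$ and with the color-$3$ at distance $3$, and symmetrically for $(x,y)=(3,2)$ on the right (here $n\ge 6$ guarantees these are the true cyclic distances). Hence every gap has size exactly $1$, so $n=2m$ where $m=|V_1|$, and the $m$ gap-vertices, each in $\{2,3\}$ and at cyclic distance $2$ from its neighbors, must alternate $2,3,2,3,\dots$; such a cyclic alternation of length $m$ exists only for $m$ even, giving $4\mid n$.

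I expect the size-$\ge 2$ gap lemma to be the main obstacle: it is what upgrades a mere density count into an exact divisibility condition. A pure counting bound is hopeless here, since $\lfloor n/2\rfloor+\lfloor n/3\rfloor+\lfloor n/4\rfloor \approx \tfrac{13}{12}n > n$ leaves slack for all large $n$; only the rigidity forced by forbidding two consecutive non-color-$1$ vertices pins the coloring down to the period-$4$ pattern and thereby to $4\mid n$.
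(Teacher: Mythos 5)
The paper states this proposition without proof, importing it directly from Goddard et al.~\cite{goddard2008broadcast}, so there is no in-paper argument to compare against; your proof is correct and self-contained. The constructions for the upper bounds work (appending $4$, $1\,4$, or $1\,2\,4$ after the $(1,2,1,3)$-blocks), and the crucial lower bound is sound: the gap lemma correctly forces the strict color-$1$ alternation, the deduction $4\mid n$ follows, and you rightly treat $n=5$ separately, since the distance-$3$ claim in the window argument genuinely fails there.
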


\begin{prop}[\cite{goddard2008broadcast}] \label{proo:goddard}
    Let $G$ be a graph of diameter $2$. Then $\chi_p(G)=n-\alpha(G)+1=\beta(G)+1$, where $\alpha(G)$ denotes the independence number of the graph $G$ while $\beta(G)$ denotes the size of its smallest vertex cover. 
\end{prop}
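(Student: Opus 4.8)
The plan is to reduce the statement to the single nontrivial equality $\chi_p(G) = n - \alpha(G) + 1$. The remaining equality $n - \alpha(G) + 1 = \beta(G) + 1$ is immediate from Gallai's identity $\alpha(G) + \beta(G) = n$, which holds for every graph, so no extra work is needed there. The crux of the whole argument is a single structural observation about diameter-$2$ graphs: any two distinct vertices lie at distance at most $2$. Consequently, for every color $i \geq 2$ the class $V_i$ is required to be an $i$-packing, meaning its vertices are pairwise at distance at least $i + 1 \geq 3$; since no such pair exists, each $V_i$ with $i \geq 2$ contains \emph{at most one} vertex. By contrast, $V_1$ is a $1$-packing, which is exactly an independent set, so $|V_1| \leq \alpha(G)$.

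For the lower bound I would simply count vertices in an arbitrary packing coloring using $k$ colors. At most $\alpha(G)$ vertices receive color $1$, and by the observation above at most one vertex receives each of the colors $2, 3, \ldots, k$. Hence
\[
n \;\leq\; \alpha(G) + (k - 1),
\]
which rearranges to $k \geq n - \alpha(G) + 1$, giving $\chi_p(G) \geq n - \alpha(G) + 1$.

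For the matching upper bound I would exhibit an explicit coloring. Fix a maximum independent set $I$ with $|I| = \alpha(G)$, assign color $1$ to every vertex of $I$, and assign the $n - \alpha(G)$ distinct colors $2, 3, \ldots, n - \alpha(G) + 1$ bijectively to the remaining vertices. This is a valid packing coloring: $V_1 = I$ is independent and therefore a $1$-packing, while every other color class is a singleton and hence trivially an $i$-packing. The coloring uses exactly $n - \alpha(G) + 1$ colors, matching the lower bound, so $\chi_p(G) = n - \alpha(G) + 1$, and substituting $n - \alpha(G) = \beta(G)$ yields the $\beta(G) + 1$ form.

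I expect no genuine obstacle in this proof; it is essentially a counting argument once the diameter-$2$ observation is in place. The only point that must be stated carefully is precisely that observation, namely that the bound on the diameter forces every color class $V_i$ with $i \geq 2$ to be a singleton, since this is what collapses the packing-coloring problem on such graphs into a maximum-independent-set computation.
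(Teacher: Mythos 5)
The paper imports this proposition from Goddard et al.\ without reproducing a proof, so there is no internal argument to compare against; your proof is correct and is exactly the standard one for this result. The key observation that diameter $2$ forces every class $V_i$ with $i\geq 2$ to be a singleton, combined with $|V_1|\leq\alpha(G)$ for the lower bound and the explicit coloring (a maximum independent set gets color $1$, all other vertices get distinct colors) for the upper bound, together with Gallai's identity $\alpha(G)+\beta(G)=n$, is precisely how the cited result is established.
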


By Proposition \ref{proo:goddard}, it is clear that $\chi_p(K_{1,n})=2$, where $K_{1,n}$ denotes a star on $n+1$ vertices.

\begin{observation}
 Let $G$ be any arbitrary graph with $diam(G)=d$. Then any color greater than $d-1$ is used at most once in any packing coloring of $G$. \hfill $\Box$   \label{obs:diam}
\end{observation}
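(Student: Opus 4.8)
The plan is to argue by contradiction directly from the defining property of a packing coloring together with the definition of diameter. Suppose, for contradiction, that in some packing coloring $c$ of $G$ there is a color $i$ with $i > d-1$ that is assigned to two distinct vertices $u$ and $v$. First I would invoke the defining constraint of the color class $V_i$: since $u,v \in V_i$ form an $i$-packing, we must have $d_G(u,v) \geq i+1$.

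Next I would combine this with the upper bound coming from the diameter. By definition, $diam(G) = d$ is the maximum distance between any two vertices, so in particular $d_G(u,v) \leq d$. Chaining the two inequalities gives $d+1 \leq i+1 \leq d_G(u,v) \leq d$, where the first inequality uses $i \geq d$ (equivalently $i > d-1$, as both are integers). This yields $d+1 \leq d$, a contradiction.

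Since no color exceeding $d-1$ can appear on two distinct vertices, every such color is used at most once, establishing the claim. I do not anticipate any genuine obstacle here: the statement is an immediate consequence of the definitions, and the only point requiring mild care is translating the strict inequality $i > d-1$ into the integer bound $i \geq d$ so that the distance constraint $d_G(u,v) \geq i+1 \geq d+1$ directly contradicts $d_G(u,v) \leq diam(G) = d$.
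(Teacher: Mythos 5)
Your argument is correct and is exactly the immediate consequence of the definitions that the paper has in mind (the observation is stated with a $\Box$ and no written proof, as it follows directly from $d_G(u,v)\geq i+1\geq d+1$ contradicting $d_G(u,v)\leq diam(G)=d$). Nothing further is needed.
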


\begin{prop}
  Let $G$ be a graph with $\chi_p(G)\leq x$, where $x\geq 1$. Then no vertex of degree at least $x$ can be colored with $1$.
    \label{obs:no1}  
\end{prop}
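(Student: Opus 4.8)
The plan is to argue by contradiction and exploit the single observation that the neighbors of a vertex colored $1$ all lie within distance $2$ of one another, so that no color exceeding $1$ can be reused among them. Concretely, I would fix a packing coloring $c\colon V(G)\to[x]$ of $G$ (such a coloring exists and uses only colors in $[x]$ since $\chi_p(G)\le x$), and suppose for contradiction that some vertex $v$ with $\deg(v)\ge x$ has $c(v)=1$. Write $N(v)$ for the set of neighbors of $v$, so that $|N(v)|=\deg(v)\ge x$.

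The heart of the argument is two elementary distance facts about $N(v)$. First, because the color class $V_1$ is a $1$-packing (an independent set), every neighbor $u$ of $v$ satisfies $c(u)\ne 1$, hence $c(u)\in\{2,\dots,x\}$. Second, for any two distinct neighbors $u,u'$ of $v$ the triangle-type bound $d_G(u,u')\le d_G(u,v)+d_G(v,u')=2$ holds. I would then combine these: if two distinct neighbors $u,u'$ received the same color $i\ge 2$, they would both belong to the $i$-packing $V_i$ and hence would need $d_G(u,u')\ge i+1\ge 3$, contradicting $d_G(u,u')\le 2$. Therefore the restriction of $c$ to $N(v)$ is injective with image contained in the $(x-1)$-element set $\{2,\dots,x\}$, forcing $\deg(v)=|N(v)|\le x-1$ and contradicting $\deg(v)\ge x$.

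I do not expect a genuine obstacle here, since the proof is a short pigeonhole argument; the only points requiring care are boundary checks. One should confirm that the strict inequality $i+1\ge 3>2$ used in the key step is valid for every admissible color $i\in\{2,\dots,x\}$ (it is, since the smallest such $i$ is $2$), and that the statement is correct in the degenerate case $x=1$, where it holds vacuously because $\chi_p(G)\le 1$ forces $G$ to be edgeless and hence admits no vertex of degree at least $1$. With these remarks in place the contradiction is complete.
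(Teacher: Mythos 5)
Your proof is correct and follows essentially the same pigeonhole argument as the paper: the neighbors of a vertex colored $1$ cannot themselves be colored $1$, are pairwise at distance at most $2$, hence receive distinct colors from $\{2,\dots,x\}$, contradicting $\deg(v)\ge x$. The only difference is cosmetic — you make the injectivity and the degenerate case $x=1$ explicit, which the paper leaves implicit.
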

\begin{proof}
    Suppose, to the contrary, that a vertex of degree at least $x$ is assigned color 1. All of its neighbors must receive distinct colors, since no two adjacent vertices can share the same color. Moreover, the neighbors of this vertex are at distance at most 2 from one another, so the only color that could potentially be reused among them is color 1. However, this is impossible because they are all adjacent to the vertex already colored with 1.
Therefore, at least 
$x+1$ distinct colors are required, contradiction.
\end{proof}

\section{Path-aligned graph products} \label{sec:pathalignedproducts}

An \emph{automorphism} of a graph is a permutation of its vertices such that edges are mapped to edges and non-edges are mapped to non-edges.

\begin{definition}[\cite{GoRo13}]
\emph{A graph $G$ is said to be} vertex-transitive \emph{if its automorphism group acts transitively on the vertex set of $G$, meaning that for any two vertices $u, v \in V(G)$, there exists an automorphism $\phi$ such that $\phi(u)=v$.}
\end{definition}

Informally speaking, a graph is vertex-transitive if every vertex has the same local environment, so that no vertex can be distinguished from any other based on the vertices and edges surrounding it. Examples of vertex-transitive graphs are: complete graphs, cycles, Petersen graph.
We introduce a graph product of a path and vertex-transitive graph.

\begin{definition}
\emph{Given positive integers $n$ and $\ell$, let us assume that $n$ is divisible by $\ell$. Let \( G \) be any connected vertex-transitive graph having $P_{\ell}$ as a subgraph.
Then, the} path-aligned graph product of $G$\emph{, denoted by
\( P_n \Diamond_{\ell} G \), is the graph formed by the path \( P_n \) and \(  n/\ell  \) copies of \( G \), where the $i^{\text{th}}$ copy of $P_{\ell}$ in $P_n$, i.e. the vertices \( v_j, \ldots, v_{j+\ell-1} \) such that \( j = (i-1)\ell + 1 \),  corresponds to a $P_\ell$ in the $i^{\text{th}}$ copy of \( G \), where $1 \leq i\leq n/\ell$ (cf. Fig.\ref{fig:P10_C4}).} \label{aligned_cycles}
\end{definition}

\subsection{Path-aligned cycle products}\label{subsec:cycles}
    We start with paths of cycles having 2 or 3 common vertices with the path, i.e. $\ell \in \{2,3\}$. Inspired by \cite{laiche2017packing}, for $\ell=2$ we will use the following notation for the suggested coloring. For consecutive cycles connected by a path, we will define a pattern for the path and, at the same time, specify the pattern for the cycles. For example, the coloring of $P_{10} \Diamond_2 C_4$ shown in Figure~\ref{fig:P10_C4} can be described as: $({\bf 3\ }1\ 2\ {\bf 1})({\bf 4\ }1\ 2\ {\bf 1})({\bf 3\ }1\ 2\ {\bf 1})({\bf 5\ }1\ 2\ {\bf 1})({\bf 3\ }1\ 2\ {\bf 1})$, where bold numbers correspond to the colors of the vertices on the path $P_{10}$, while non-bold numbers represent colors of the vertices belonging only to the cycles. The colors for consecutive cycles are grouped using parentheses $()$. 
    One can observe that the order of non-bold vertices follows the clockwise order of the vertices on the cycle, starting from the previous bold vertex.
For larger $\ell$ the notation needs to be slightly modified. For example, for the coloring of $P_{15} \Diamond_3 C_{4s}$, based on Figure \ref{fig:P15_C8}, it will be described as $({\bf 2}\ 1\ [3\ 1\ 2\ 1]^*\ \overleftarrow{{\bf 3\ 1}})$ $({\bf 1}\ 2\ [1\ 3\ 1\ 2]^* \overleftarrow{{\bf 1\ 4}})$ $({\bf 3}\ 1\ [2\ 1\ 3\ 1]^*\overleftarrow{{\bf 2\ 1}})$ $({\bf 1}\ 3\ [1\ 2\ 1\ 3]^*\overleftarrow{{\bf 1\ 4}})$ $({\bf 2}\ 1\ [3\ 1\ 2\ 1]^*\overleftarrow{{\bf 3\ 1}})$.
The left arrow above the colors denotes the colors of the corresponding vertices on the path in reverse order, ensuring that all colors of a single cycle appear consecutively within the parentheses. Thus, considering only the vertices of the path $P_{15}$, their colors are as follows: $2\ 1\ 3\ 1\ 4\ 1\ 3\ 1\ 2\ 1\ 4\ 1\ 2\ 1\ 3$. In addition, the notation $[\ ]^*$ indicates that the color pattern $[\ ]$ is repeated an arbitrary number of times, possibly zero. Similarly, $[\ ]^k$ denotes that the pattern $[\ ]$ is repeated exactly $k$ times.

\begin{prop}\label{prop:Cn}
    Let $t \ge 2$ and $n \ge 1$ be integers. 
    Then $P_{\ell t}\Diamond_{\ell} C_n$ is isomorphic to $P_{(n-\ell +2)t}\Diamond_{n-\ell+2} C_n$ for any integer $2 \leq \ell \leq n$.
\end{prop}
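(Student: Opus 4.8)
The plan is to unwind the definition into a transparent "chain of cycles" picture and then build an explicit isomorphism out of cyclic symmetries. First I would record the crucial structural fact that since the distinguished $P_\ell$ sits inside $C_n$ as a subpath, all of its $\ell-1$ edges are already edges of the cycle. Counting edges of the defining path $P_{\ell t}$ then shows that $t(\ell-1)$ of its edges coincide with cycle edges, leaving exactly $t-1$ genuinely new edges. Hence $P_{\ell t}\Diamond_{\ell} C_n$ is precisely $t$ disjoint copies $H_1,\dots,H_t$ of $C_n$ together with $t-1$ \emph{bridges}, where the bridge from $H_i$ to $H_{i+1}$ joins the last vertex of the distinguished $P_\ell$ in $H_i$ to the first vertex of the distinguished $P_\ell$ in $H_{i+1}$. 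The upshot is that the whole product is determined by the common cycle $C_n$ together with the two endpoints of the distinguished subpath in each copy, since these endpoints are the only bridge attachment points.

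The key observation concerns a single cycle. Labelling $V(C_n)=\{0,1,\dots,n-1\}$ cyclically, I may take the distinguished $P_\ell$ to be the arc $0,1,\dots,\ell-1$, with endpoints $0$ and $\ell-1$. Its complementary arc $\ell-1,\ell,\dots,n-1,0$ has exactly $n-\ell+2=\ell'$ vertices, so it is a $P_{\ell'}$, and it has the very same endpoints $0$ and $\ell-1$. Equivalently, the endpoints of a $P_\ell$ arc and of its complementary $P_{\ell'}$ arc form the same unordered pair, sitting at the same cyclic distance $\min(\ell-1,\,n-\ell+1)$ in both descriptions. Thus choosing the distinguished subpath of each $C_n$ to be $P_\ell$ or $P_{\ell'}$ places the bridge attachment points at identical vertices.

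Building on this, I would assemble the isomorphism copy by copy. Write $(a_i,b_i)$ for the ordered endpoints of the distinguished $P_\ell$ in the $i$-th copy of $P_{\ell t}\Diamond_{\ell}C_n$, and $(a'_i,b'_i)$ for those of the $P_{\ell'}$ in $P_{\ell' t}\Diamond_{\ell'}C_n$. Since these two ordered pairs lie at the same cyclic distance, the dihedral automorphism group of $C_n$ contains an element $\phi_i$ with $\phi_i(a_i)=a'_i$ and $\phi_i(b_i)=b'_i$, using the fact that $\mathrm{Aut}(C_n)=D_n$ acts transitively on ordered pairs of vertices at a prescribed distance (a reflection being needed precisely when the orientation must be reversed). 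Setting $\phi=\bigsqcup_{i=1}^{t}\phi_i$, each $\phi_i$ carries the cycle edges of $H_i$ to those of $H'_i$ and the bridge $\{b_i,a_{i+1}\}$ to $\{b'_i,a'_{i+1}\}$, which is a bridge of the target graph. As both graphs have the same vertex count and the same edge count ($tn$ cycle edges plus $t-1$ bridges) and $\phi$ is an edge-preserving bijection, $\phi$ is the desired isomorphism.

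The main obstacle I anticipate is purely the bookkeeping of orientation: one must verify that the "first vertex / last vertex" convention imposed by the defining paths $P_{\ell t}$ and $P_{\ell' t}$ can be reconciled, which is exactly what forces the use of reflections in $D_n$ rather than rotations alone. A secondary point worth stating explicitly is that the path-aligned product is well defined up to isomorphism regardless of which $P_\ell$ (and which orientation) is selected inside each copy of $C_n$; this follows from the transitivity of $D_n$ on directed subpaths of fixed length, and it is precisely what legitimizes the convenient choices of distinguished arcs made above.
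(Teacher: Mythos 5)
Your argument is correct and rests on the same key observation as the paper's proof: the complementary arc of the distinguished $P_\ell$ in each copy of $C_n$ is a $P_{n-\ell+2}$ sharing the same two endpoints, so the bridge attachment points are unchanged. The paper simply re-reads the identical graph, taking the union of the complementary arcs together with the $t-1$ bridge edges as the new defining path $P_{(n-\ell+2)t}$, whereas you package the same idea as an explicit copy-by-copy isomorphism built from dihedral symmetries of $C_n$; both are valid.
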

\begin{proof}
Let 
\( P_{\ell t} \Diamond_{\ell} C_n \) denote the graph formed by the path \( P_{\ell t} \) and \(  \ell  \) copies of \( C_n \), where the $i^{\text{th}}$ copy of $P_{\ell}$ in $P_{\ell t}$, i.e. the vertices \( v_j, \ldots, v_{j+\ell -1} \) such that \( j = (i-1)\ell + 1 \),  corresponds to a $P_{\ell}$ in the $i^{\text{th}}$ copy of \( G \), as given in Definition \ref{aligned_cycles}. Consider $C
_n$ as the union of paths $P_{\ell}$ and $P'_{n-\ell +2}$ with the identified start and end vertices. Then, let the vertices in the path $P'_{n-\ell +2}$ in the $i^{\text{th}}$ copy of $C_n$ in \( P_{\ell t} \Diamond_{\ell} C_n \) be \( u_s, \ldots, u_{s+(n-\ell+1)}\) such that $s=(i-1)(n-\ell+2)+1$, where $u_s$ and $u_{s+(n-\ell +1)}$ are identified with $v_j$ and $v_{j+\ell -1}$, respectively. Then, the path-aligned cycle product of the path consisting of the union of the paths $P'_{n-\ell +2}$ in each copy of $C_n$ is equivalent to the path-aligned cycle product $P_{(n-\ell +2)t}\Diamond_{n-\ell +2} C_n$. Hence, $P_{(n-\ell +2)t}\Diamond_{n-\ell +2} C_n$ is isomorphic to $P_{\ell t}\Diamond_{\ell} C_n$.
\end{proof}

In the following, we consider the packing coloring of path-aligned cycle products in accordance with the lengths of the cycles.


\subsubsection{Cycles $C_{4s}$ for $s \geq 1 $} 

\begin{theorem}
    Let $s,t\geq 1$ be integers. Then,
$$\chi_p(P_{2t}\Diamond_2 C_{4s})\leq \left\{
\begin{array}{ll}
3, & t=1,\\
4, & 2 \leq t \leq 3,\\
5, & t\geq 4.
\end{array}\right.
$$
In addition, equality holds for $t\in \{1,2,3\}$. 
\label{thm:l=2C4s}   
\end{theorem}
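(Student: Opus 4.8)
The plan is to obtain the three upper bounds by a single explicit periodic coloring, and to prove tightness for $t\le 3$ by a short structural argument based on Propositions \ref{prop:cycles} and \ref{obs:no1} together with subgraph monotonicity of the packing chromatic number. The guiding idea is that the colors $1,2,3$ should suffice for the ``bulk'' of the graph, while the larger colors are needed only on the left spine vertices $v_1,v_3,\dots,v_{2t-1}$, and how many large colors are required is governed entirely by how far apart these spine vertices must be kept.

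For the upper bound I would index the vertices of the $i$-th copy of $C_{4s}$ cyclically as $0,1,\dots,4s-1$, with position $0$ the left spine vertex $v_{2i-1}$ and position $4s-1$ the right spine vertex $v_{2i}$. Color position $0$ with a high color $h_i$, every odd position with $1$, every position $\equiv 2 \pmod 4$ with $2$, and every position $\equiv 0 \pmod 4$ (other than $0$) with $3$; set $h_i=3$ for $i$ odd, $h_i=4$ for $i\equiv 2\pmod 4$, and $h_i=5$ for $i\equiv 0\pmod 4$. Since consecutive left spine vertices are at graph distance $2$ along the path, the color-$3$ spines sit at mutual distance $4$, and the color-$4$ and color-$5$ spines at distance $8$, all admissible. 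One then verifies that $1,2,3$ form valid packings both inside each cycle and across each junction edge $v_{2i}v_{2i+1}$: the closest same-colored pairs straddling a junction lie at distances $2$, $4$, and $\ge 4$ for colors $1,2,3$ respectively. Counting the colors that actually occur yields $\{1,2,3\}$ for $t=1$, $\{1,2,3,4\}$ for $2\le t\le 3$, and $\{1,2,3,4,5\}$ for $t\ge 4$; for $s=1$ this coloring coincides with the pattern in Figure \ref{fig:P10_C4}.

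For equality, the case $t=1$ is immediate because $P_2\Diamond_2 C_{4s}\cong C_{4s}$, whence Proposition \ref{prop:cycles} gives $\chi_p=3$. The key case is $t=2$. Assume $\chi_p(P_4\Diamond_2 C_{4s})\le 3$. The two junction vertices $v_2,v_3$ each have degree $3$, so by Proposition \ref{obs:no1} neither can be colored $1$; being adjacent they satisfy $\{c(v_2),c(v_3)\}=\{2,3\}$, say $c(v_2)=2$ and $c(v_3)=3$. Writing the first cycle as $a_0=v_1,a_1,\dots,a_{4s-1}=v_2$, the vertex $v_1$ is adjacent to $v_2$ (so $c(v_1)\ne 2$) and has $d(v_1,v_3)=2<4$ (so $c(v_1)\ne 3$), forcing $c(v_1)=1$; likewise $a_1$ has $d(a_1,v_2)=2<3$ and $d(a_1,v_3)=3<4$, forcing $c(a_1)=1$. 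But $v_1=a_0$ and $a_1$ are adjacent, so color $1$ fails to be a packing, a contradiction. The subcase $c(v_2)=3,c(v_3)=2$ is symmetric, so $\chi_p\ge 4$.

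The case $t=3$ then needs no further computation: $P_6\Diamond_2 C_{4s}$ contains $P_4\Diamond_2 C_{4s}$ as a subgraph (retain the first two cycles and the junction edge between them), and since $\chi_p$ is monotone under subgraphs one gets $\chi_p(P_6\Diamond_2 C_{4s})\ge \chi_p(P_4\Diamond_2 C_{4s})\ge 4$. The main obstacle in the whole argument is bookkeeping in the upper bound: one must confirm, uniformly in $s$, that the explicit coloring respects every packing constraint simultaneously for the colors $1,2,3$ across all junctions and for the high-color spines, and this has to be checked in the tight cases (two color-$3$ spines exactly $4$ apart, and a cycle-internal $3$ lying exactly $4$ from a neighboring spine $3$). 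By contrast the lower bound is short, the whole force of it coming from the single observation that the degree-$3$ junction vertices cannot be colored $1$.
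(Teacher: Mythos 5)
Your proposal is correct and takes essentially the same approach as the paper: the explicit coloring (high colors $3,4,3,5,\dots$ on the left spine vertices, the $1\,2\,1\,3$-type pattern around each cycle) coincides with the paper's patterns $(a)(b)(a)(c)$, and the tightness argument for $t=2$ is the same forcing argument via Proposition \ref{obs:no1} applied to the degree-$3$ junction vertices. The only minor deviation is that for $t=3$ you invoke subgraph monotonicity of $\chi_p$ to inherit the bound from $P_4\Diamond_2 C_{4s}$, whereas the paper argues directly that any packing $3$-coloring of $P_6$ must place color $1$ on a degree-$3$ vertex; both are valid, and the paper itself uses the same monotonicity argument elsewhere (e.g.\ in Theorem \ref{thm_K4}).
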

\begin{proof}
For $t=1$, we have $P_{2t} \Diamond_2 C_{4s} \simeq C_{4s}$, and hence its packing chromatic number is equal to 3 (cf. Proposition \ref{prop:cycles}). 

\noindent For $t=2$, the coloring pattern is as follows: $({\bf 3}\ 1\ [2\ 1\ 3\ 1]^*2\ {\bf 1})({\bf 4}\ 1\ [2\ 1\ 3\ 1]^* 2\ {\bf 1}).$ 
\noindent For $t=3$, we have the following pattern: $$({\bf 3}\ 1\ [2\ 1\ 3\ 1]^*2\ {\bf 1})({\bf 4}\ 1\ [2\ 1\ 3\ 1]^* 2\ {\bf 1})({\bf 3}\ 1\ [2\ 1\ 3\ 1]^*2\ {\bf 1}).$$

\noindent In both cases, one can easily check that the distances between vertices colored with $i$ are at least $i+1$, $i\in \{1,2,3,4\}$. In addition, it is not possible to color $P_4 \Diamond_2 C_{4s}$ or $P_6 \Diamond_2 C_{4s}$ using fewer colors. In fact, it suffices to show that three colors are not sufficient. Indeed, it is impossible to obtain a packing coloring of $P_6$ without assigning color $1$ to the vertices of degree 3 in $P_6 \Diamond_2 C_{4s}$. This, in turn, already necessitates the use of at least four colors, by Proposition~\ref{obs:no1}. The path $P_4$ can be colored  using 3 colors while avoiding the assignment of color 1 to the vertices of degree 3 in $P_4 \Diamond_2 C_{4s}$, using the pattern $1\ 2\ 3\ 1$ or $1\ 3\ 2\ 1$. However, such a coloring cannot be extended to $C_{4s}$ without introducing a fourth color for any $s\geq 1$. Indeed, w.l.o.g. let us consider only pattern $1\ 2\ 3\ 1$ for $P_4$. Note that the vertex on the first cycle adjacent to $v_1$ (following the notation from Definition \ref{aligned_cycles}), distinct from $v_2$, cannot be assigned any of the colors 1, 2 or 3. Hence, color 4 must be used.

\noindent Let $(a)$, $(b)$, and $(c)$ denote the patterns $({\bf 3}\ 1\ [2\ 1\ 3\ 1]^*2\ {\bf 1})$, $({\bf 4}\ 1\ [2\ 1\ 3\ 1]^* 2\ {\bf 1})$, $({\bf 5}\ 1\ [2\ 1\ 3\ 1]^*2\ {\bf 1})$, respectively. For $t \geq 4$, if $t \equiv 0 \pmod{4}$, then we repeat the $(a)(b)(a)(c)$ pattern $t/4$ times. Otherwise, we repeat the $(a)(b)(a)(c)$ pattern $\lfloor t/4\rfloor$ times, and we add $(a)$, $(a)(b)$, or $(a)(b)(a)$ for $t\equiv 1, 2, 3 \pmod{4}$, respectively (cf. Figure \ref{fig:P10_C4} and \ref{fig:P10_C8}).
\\One can verify that the pattern ensures feasible packing 5-colorings.
\end{proof}

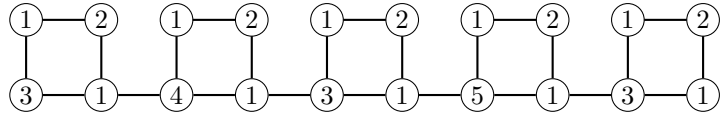
\begin{figure}[H]
\centering
\begin{tikzpicture}[scale=1.0, every node/.style={circle, draw=black, fill=white, inner sep=1.5pt}]

\foreach \i/\x/\col in {
  0/0/3, 1/1/1, 2/2/4, 3/3/1, 4/4/3,
  5/5/1, 6/6/5, 7/7/1, 8/8/3, 9/9/1
} {
  \node (v\i) at (\x,0) {\col};
}

\foreach \i in {0,...,8} {
  \pgfmathtruncatemacro{\j}{\i + 1}
  \draw[thick] (v\i) -- (v\j);
}

\foreach \i/\x in {0/0, 2/2, 4/4, 6/6, 8/8} {
  \pgfmathtruncatemacro{\j}{\i + 1}
  \coordinate (a\i) at (\x,1);             
  \coordinate (b\i) at ({\x + 1},1);       
  
  \draw[thick] (v\i) -- (a\i) -- (b\i) -- (v\j);
  
  \node at (a\i) {1};
  \node at (b\i) {2};
}

\end{tikzpicture}
\caption{5-packing coloring of $P_{10} \mathbin{\Diamond_2} C_4$.}\label{fig:P10_C4}
\end{figure}

\begin{figure}[htb]
\centering
\begin{tikzpicture}[scale=1.0, every node/.style={circle, draw=black, fill=white, inner sep=1.5pt}]

\foreach \i/\x/\col in {
  0/0/3, 1/1/1, 2/2/4, 3/3/1, 4/4/3,
  5/5/1, 6/6/5, 7/7/1, 8/8/3, 9/9/1
} {
  \node (v\i) at (\x,0) {\col};
}

\foreach \i in {0,...,8} {
  \pgfmathtruncatemacro{\j}{\i + 1}
  \draw[thick] (v\i) -- (v\j);
}

\foreach \i/\x in {0/0, 2/2, 4/4, 6/6, 8/8} {
  \pgfmathtruncatemacro{\j}{\i + 1}
  \coordinate (a\i) at ({\x-0.2},0.7);             
  \coordinate (b\i) at ({\x-0.2},1.4);       
  \coordinate (c\i) at (\x,2.1);
  \coordinate (d\i) at ({\x+1.2},0.7);             
  \coordinate (e\i) at ({\x+1.2},1.4);       
  \coordinate (f\i) at ({\x+1},2.1);
  \draw[thick] (v\i) -- (a\i) -- (b\i) -- (c\i) -- (f\i) -- (e\i) -- (d\i) -- (v\j);
  
  \node at (a\i) {1};
  \node at (b\i) {2};
  \node at (c\i) {1};
  \node at (f\i) {3};
  \node at (e\i) {1};
  \node at (d\i) {2};
}

\end{tikzpicture}
\caption{Packing 5-coloring of $P_{10} \mathbin{\Diamond_2} C_8$.}\label{fig:P10_C8}
\end{figure}
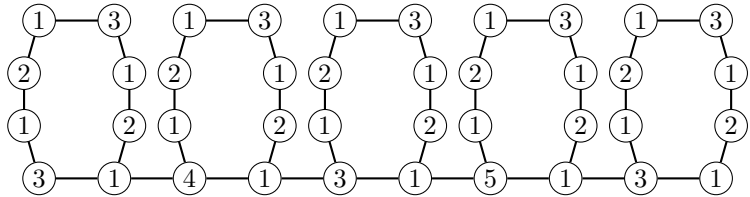

\begin{theorem}
    Let $s,t \geq 1$ be integers. Then,
$$\chi_p(P_{3t}\Diamond_3 C_{4s}) = \left\{
\begin{array}{ll}
3, & t=1,\\
4, & t\geq 2.
\end{array}\right.
$$
\end{theorem}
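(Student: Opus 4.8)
The first step is to unpack the structure of $G=P_{3t}\Diamond_3 C_{4s}$ dictated by Definition \ref{aligned_cycles}: $G$ consists of $t$ copies of $C_{4s}$, where the $i$-th copy shares the three consecutive path vertices $v_{3i-2},v_{3i-1},v_{3i}$ (a $P_3$) with $P_{3t}$ and contributes $4s-3$ new internal vertices forming the complementary arc of that cycle. Two facts I would record at once. First, the \emph{junction} vertices $v_{3i}$ and $v_{3i+1}$ (for $1\le i\le t-1$) have degree $3$; they occur in adjacent pairs joined by a path edge, and the last vertex $v_{3i+1}$ of one pair lies at distance $2$ from the first vertex $v_{3i+3}$ of the next. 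Second, since every complementary arc has length $4s-2\ge 2$, no cycle creates a shortcut between path vertices, so distances among path vertices agree with those in the plain path $P_{3t}$, and each cycle is a block meeting the rest of $G$ only at its two junction cut-vertices. For $t=1$ we have $G\simeq C_{4s}$, so $\chi_p(G)=3$ by Proposition \ref{prop:cycles}.

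For the lower bound when $t\ge 2$, I would argue by contradiction, assuming $\chi_p(G)\le 3$. As every junction vertex has degree $3$, Proposition \ref{obs:no1} forbids color $1$ on all of them, so each junction gets color $2$ or $3$. Next I would note that any two consecutive junctions must receive \emph{different} colors: within a pair they are adjacent (distance $1$), and across a pair boundary they are at distance $2$, yet two vertices sharing color $2$ (resp. $3$) must be at distance $\ge 3$ (resp. $\ge 4$). Reading the junctions along the path therefore forces an alternating $2,3$ pattern, which places color $3$ on either all the vertices $v_{3i}$ or all the vertices $v_{3i+1}$. For $t\ge 3$ this produces two color-$3$ vertices at distance exactly $3<4$, a contradiction. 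The case $t=2$ has a single junction pair $(v_3,v_4)$ and must be treated by hand: with $\{c(v_3),c(v_4)\}=\{2,3\}$, propagating the distance constraints forces $c(v_2)=c(v_5)=1$ and then leaves no admissible color for the endpoint $v_1$ (symmetrically $v_6$, using the reflection automorphism of $G$). Hence $\chi_p(G)\ge 4$ for all $t\ge 2$.

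For the matching upper bound I would exhibit an explicit periodic packing $4$-coloring in the style of the one displayed for $P_{15}\Diamond_3 C_8$ in Section \ref{ref:prelim}. The plan is to assign color $1$ to every second vertex of the path; to distribute colors $2,3,4$ on the remaining path vertices by a short repeating block (e.g. the period-four pattern $2,3,2,4$ on the odd positions) so that color-$2$ vertices stay at path-distance $\ge 3$, color-$3$ vertices at $\ge 4$, and color $4$ (used only on the path) at $\ge 5$; and to complete each cycle by continuing a $1,2,1,3$-type pattern around its complementary arc with colors $1,2,3$ only, the arbitrary length $4s-3$ being absorbed by the $[\,\cdot\,]^{*}$ repetition. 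Because path distances coincide with those in $P_{3t}$ and each cycle is a block attached only at its junctions, the verification collapses to checking the spacing of colors $2$ and $3$ inside a single cycle and across each junction. I expect this cross-junction check to be the main obstacle: one must choose the phase of the $1,2,1,3$-pattern in successive cycles so that color-$3$ vertices of two adjacent cycles never come within distance $4$ through the shared junction — exactly the role of the alternating phases and the occasional path color $4$ in the sample pattern — and one must close the pattern correctly at both ends and on the leftover groups when $t$ is small or not a multiple of the period. These boundary adjustments are routine but slightly delicate, whereas the lower bound is purely combinatorial and driven by Proposition \ref{obs:no1}.
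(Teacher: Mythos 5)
Your lower bound is correct and is in fact more detailed than the paper's, which simply asserts that color $1$ cannot be avoided on the degree-$3$ path vertices and invokes Proposition~\ref{obs:no1}. Your chain — the junction vertices come in adjacent pairs with consecutive pairs at distance $2$, so under a $3$-coloring their colors must alternate between $2$ and $3$, which for $t\ge 3$ puts two color-$3$ junctions at distance $3<4$; and for $t=2$ the forced $c(v_2)=c(v_5)=1$ leaves no color for $v_1$ since $d(v_1,v_3)=2$ and $d(v_1,v_4)=3$ — is exactly the justification the paper omits, and it checks out (including the observation that the arcs of length $4s-2\ge 2$ create no shortcuts).

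The upper bound, however, is only a plan, and the deferred step is not routine: the one concrete candidate you float is unextendable. With $1$ on even path positions and $2,3,2,4$ repeated on odd positions, the path reads $2,1,3,1,2,1,4,\dots$, so the second cycle carries $1,2,1$ on $v_4,v_5,v_6$. Its arc vertex $w$ adjacent to $v_4$ then cannot be $1$ (adjacent to $v_4$), cannot be $2$ (distance $2$ from $v_5$), cannot be $3$ (distance $2$ from $v_3$, colored $3$), and cannot be $4$ (distance $4$ from $v_7$, colored $4$). So the failure is not merely a matter of choosing arc phases; the path pattern itself must be designed jointly with the arcs. This is precisely what the paper's four staggered blocks $(a),(b),(c),(d)$ do — the path carries $2,1,3,1,4,1,3,1,2,1,4,1,\dots$, with color $4$ placed on the middle vertex of every other cycle so that the arc vertices forced away from colors $1$ and $2$ near a junction never sit within distance $3$ of a path vertex colored $3$. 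Until you exhibit one explicit periodic assignment (or adopt the paper's) and verify the junction distances for colors $2$, $3$, $4$, the inequality $\chi_p(P_{3t}\Diamond_3 C_{4s})\le 4$ is not established; all of the content of the construction lives in the step you set aside.
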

\begin{proof}
For $t=1$, we have $P_{3t} \Diamond_2 C_{4s} \simeq C_{4s}$ and hence its packing chromatic number is equal to 3 (cf. Proposition \ref{prop:cycles}).     

\noindent Let $(a)$, $(b)$, $(c)$, and $(d)$ denote the patterns $({\bf 2}\ 1\ [3\ 1\ 2\ 1]^* \overleftarrow{{\bf 3\ 1}})$, $({\bf 1}\ 2\ [1\ 3\ 1\ 2]^* \overleftarrow{{\bf 1\ 4}})$, $({\bf 3}\ 1\ [2\ 1\ 3\ 1]^* \overleftarrow{{\bf 2\ 1}})$, $({\bf 1}\ 3\ [1\ 2\ 1\ 3]^* \overleftarrow{{\bf 1\ 4}})$, respectively.
For $t \equiv 0 \pmod{4}$, we repeat the $(a)(b)(c)(d)$ pattern $t/4$ times. Otherwise, we repeat the $(a)(b)(c)(d)$ pattern $\lfloor t/4\rfloor $ times and add $(a)$ (cf. Fig.~\ref{fig:P15_C8}), $(a)(b)$, or $(a)(b)(c)$ for $t \equiv 1, 2, 3 \pmod{4}$, respectively.
One can easily verify that the pattern indicates a feasible packing 4-coloring.

\noindent Since it is not possible to avoid assigning color~$1$ to the vertices of degree~$3$ on the path in $P_{3t} \Diamond_3 C_{4s}$ for $t \geq 2$, Proposition~\ref{obs:no1} implies that fewer than four colors cannot be used.
\end{proof}

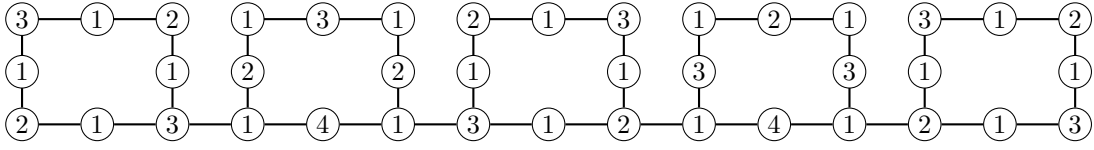
\begin{figure}[htb]
\centering
\begin{tikzpicture}[scale=1.0, every node/.style={circle, draw=black, fill=white, inner sep=1.5pt}]

\foreach \i/\x/\col in {
  0/0/2, 1/1/1, 2/2/3, 3/3/1, 4/4/4,
  5/5/1, 6/6/3, 7/7/1, 8/8/2, 9/9/1, 10/10/4, 11/11/1, 12/12/2, 13/13/1, 14/14/3
} {
  \node (v\i) at (\x,0) {\col};
}

\foreach \i in {0,...,13} {
  \pgfmathtruncatemacro{\j}{\i + 1}
  \draw[thick] (v\i) -- (v\j);
}

\foreach \i/\x/\labA/\labB/\labC/\labE/\labF in {
  0/0/1/3/1/2/1,
  3/3/2/1/3/1/2,
  6/6/1/2/1/3/1,
  9/9/3/1/2/1/3,
  12/12/1/3/1/2/1
} {
  \pgfmathtruncatemacro{\j}{\i + 2}
   \pgfmathtruncatemacro{\s}{\i + 1}
  \coordinate (a\i) at ({\x},0.7);             
  \coordinate (b\i) at (\x,1.4);       
  \coordinate (c\i) at ({\s},1.4);
  \coordinate (e\i) at ({\j},1.4);       
  \coordinate (f\i) at ({\j},0.7);
  \draw[thick] (v\i) -- (a\i) -- (b\i) -- (c\i) --  (e\i) -- (f\i) -- (v\j);
\node at (a\i) {\labA};
  \node at (b\i) {\labB};
  \node at (c\i) {\labC};
  \node at (e\i) {\labE};
  \node at (f\i) {\labF};
}

\end{tikzpicture}
\caption{Packing 4-coloring of $P_{15} \mathbin{\Diamond_3} C_8$.}
    \label{fig:P15_C8}
\end{figure}

\subsubsection{Cycles $C_{4s+1}$ for $s\geq 1$} 

\begin{theorem} Let $t \geq 1$ be an integer. Then,
      $$\chi_p(P_{2t} \Diamond_2 C_5) \leq \left \{
    \begin{array}{ll}
4, & t=1,\\
5, & 2 \leq t\leq 4,\\
6, & t \geq 5.
\end{array}\right.
    $$
In addition, equality holds for $t\in \{1,2,3,4\}$.
\end{theorem}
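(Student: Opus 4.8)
The plan is to handle the three regimes of $t$ separately, proving the upper bounds by explicit packing colourings (in the block notation of Section~\ref{subsec:cycles}) and the lower bounds by a rigidity argument. The case $t=1$ is immediate, since $P_{2t}\Diamond_2 C_5\simeq C_5$ and Proposition~\ref{prop:cycles} gives $\chi_p(C_5)=4$, settling both bounds at once.

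For the upper bounds I would first fix the local picture: label the $i$-th copy of $C_5$ as $v_{2i-1}\,v_{2i}\,b_1^{(i)}\,b_2^{(i)}\,b_3^{(i)}$, with $v_{2i-1}v_{2i}$ the path edge and consecutive copies joined by the bridge $v_{2i}v_{2i+1}$, so that each block is written $({\bf a}\,x\,y\,z\,{\bf b})$ as in Theorem~\ref{thm:l=2C4s}. For $2\le t\le 4$ it then suffices to exhibit finitely many blocks over the palette $\{1,\dots,5\}$ whose concatenation meets all same-colour distance requirements; for instance $({\bf 1}\,4\,3\,1\,{\bf 2})({\bf 5}\,1\,3\,2\,{\bf 1})$ is already a valid $5$-colouring for $t=2$, and the cases $t=3,4$ are handled by similar explicit colourings. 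For $t\ge 5$ I would give a \emph{periodic} $6$-colouring: dedicate colour $4$ to the central cycle-only vertex $b_2^{(i)}$ of every copy (consecutive such vertices lie at distance $2+1+2=5$, exactly meeting the requirement for colour $4$), and colour the remaining comb-like subgraph on $b_3^{(i)}\,v_{2i-1}\,v_{2i}\,b_1^{(i)}$ by a periodic pattern over the five colours $\{1,2,3,5,6\}$, using $5$ and $6$ sparingly to separate repeated occurrences of colours $2$ and $3$. In every case the verification is a routine blockwise distance check.

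For the lower bound I would show that no packing $4$-colouring exists once $t\ge 2$, which covers $t\in\{2,3,4\}$. The engine is a rigidity observation: since distinct copies are joined only through bridge edges, there is no shortcut between two vertices of the same $C_5$, so distances inside each copy coincide with those in $C_5$, which has diameter $2$. Hence each of the colours $2,3,4$ occurs at most once per copy and colour $1$ at most $\alpha(C_5)=2$ times; as every copy has five vertices, each copy is \emph{forced} into the profile $(|V_1|,|V_2|,|V_3|,|V_4|)=(2,1,1,1)$, with the two $1$'s forming a maximum independent set and therefore lying at cycle-distance $2$. I would then restrict attention to the first two copies, joined by the single bridge $v_2v_3$ (cycle $1$ has no other exit, so the formula holds in the full graph), and use $d(x,y)=d_{C_1}(x,v_2)+1+d_{C_2}(y,v_3)$. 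Writing $p_c,q_c$ for the gateway-distances of the colour-$c$ vertices in the two copies, the packing conditions read $p_4+q_4\ge 4$, $p_3+q_3\ge 3$, $p_2+q_2\ge 2$, and since every gateway-distance lies in $\{0,1,2\}$ the first forces $p_4=q_4=2$.

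The heart of the proof—and the step I expect to be the main obstacle—is the finite case analysis of where colours $2,3$ can sit once colour $4$ occupies a gateway-distance-$2$ vertex and the two $1$'s are pinned to a distance-$2$ pair. A short enumeration shows that in each copy one must have $p_2\le 1$, and that $p_2=1$ forces $p_3=1$ (symmetrically for $q$). Then $p_2+q_2\ge 2$ with $p_2,q_2\le 1$ forces $p_2=q_2=1$, whence $p_3=q_3=1$ and $p_3+q_3=2<3$, contradicting the colour-$3$ condition. Thus four colours never suffice for $t\ge 2$, giving $\chi_p\ge 5$ and matching the upper bound for $t\in\{1,2,3,4\}$. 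I note that this argument actually yields $\chi_p\ge 5$ for all $t\ge 5$ as well, but paired only with the weaker upper bound $6$ it leaves those cases undetermined, which is why equality is claimed solely up to $t=4$.
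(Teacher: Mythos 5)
Your lower-bound strategy is genuinely different from the paper's (which argues that a degree-3 vertex coloured $1$ forces its neighbours to take $2,3,4$, and that colours $3$ and $4$ cannot be reused in $P_4\Diamond_2 C_5$), and your per-copy profile argument is in principle cleaner and more systematic. However, the key enumeration you lean on is wrong as stated. With the first copy written as the cycle $v_1\,v_2\,b_1\,b_2\,b_3$ and gateway $v_2$, put colour $4$ on $b_2$, the two $1$'s on the independent pair $\{v_1,b_1\}$, colour $3$ on $v_2$ and colour $2$ on $b_3$: this is a legitimate local configuration with $(p_2,p_3)=(2,0)$, refuting ``$p_2\le 1$''. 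Likewise, $1$'s on $\{b_1,b_3\}$ with $2$ on $v_1$ and $3$ on $v_2$ gives $(p_2,p_3)=(1,0)$, refuting ``$p_2=1$ forces $p_3=1$''. So the chain $p_2=q_2=1\Rightarrow p_3=q_3=1$ collapses. The argument is repairable: a correct enumeration shows the possible profiles are exactly $(p_2,p_3)\in\{(0,1),(1,0),(0,2),(2,0),(1,1)\}$, i.e.\ the true invariant is $p_2+p_3\le 2$ (reaching $3$ would force the two $1$'s onto $v_2$ and a neighbour of $v_2$, which are adjacent). Summing $p_2+q_2\ge 2$ and $p_3+q_3\ge 3$ gives $(p_2+p_3)+(q_2+q_3)\ge 5>4$, the desired contradiction. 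You should rewrite the case analysis around this inequality rather than the two false claims.

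On the upper bounds your plan matches the paper's in spirit: the paper also works with explicit block patterns, and its $t\ge 5$ pattern $({\bf 1}\ 3\ 4\ 1\ {\bf 2})({\bf 1}\ 3\ 4\ 2\ {\bf 5})({\bf 1}\ 3\ 4\ 1\ {\bf 2})({\bf 1}\ 3\ 4\ 2\ {\bf 6})$ does exactly what you propose, namely placing colour $4$ on the central cycle-only vertex of every copy (consecutive such vertices are at distance exactly $5$). But your version stops short of an actual colouring: only the $t=2$ block pair is exhibited (it checks out), while $t=3,4$ and the periodic $t\ge 5$ pattern are left as ``similar'' or ``sparing'' use of $5$ and $6$. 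Note also that the comb obtained by deleting the $b_2^{(i)}$'s is not an isometric subgraph ($b_1^{(i)}$ and $b_3^{(i)}$ are at distance $2$ in the full graph but $3$ in the comb), so a packing colouring of the comb does not automatically lift; the distance check must be done in $P_{2t}\Diamond_2 C_5$ itself. These constructions need to be written out explicitly for the proof to be complete.
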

\begin{proof}
For $t=1$, we have $P_{2t}\Diamond_2 C_5 \simeq C_5$; therefore, by Proposition \ref{prop:cycles}, its packing chromatic number is equal to 4.

\noindent Let $(a)$, $(b)$, $(c)$, and $(d)$ denote the patterns $({\bf 1}\ 3\ 5\ 1\ {\bf 2})$, $({\bf 1}\ 3\ 1\ 2\ {\bf 4})$, $({\bf 1}\ 2\ 3\ 1\ {\bf 5})$, $({\bf 1}\ 3\ 1\ 4\ {\bf 2})$, respectively. 

\noindent For $2 \leq t \leq 4$, we use the patterns $(a)(b)$, $(a)(b)(c)$, and $(a)(b)(c)(d)$, respectively. One can observe that a packing coloring of $P_{4} \Diamond_2 C_{5}$ requires at least five colors. Indeed, if any vertex of degree~$3$ in $P_4 \Diamond_2 C_5$ is assigned color~$1$, its three neighbors must be assigned colors~$2$, $3$, and~$4$. However, color 4 cannot be reused, as there is no other vertex at distance at least 5); consequently, one of the two cycles $C_5$ cannot obtain color 4.  Since $\chi_p(C_5)=4$, color 5 must be used in such a case. On the other hand, let us omit assigning color 1 to vertices of degree 3 in $P_4 \Diamond_2 C_5$. W.l.o.g. let us color the vertices with 2 and 3. However, then color 3 cannot be reused, because there is no other vertex at distance at least 4 in $P_4 \Diamond_2 C_5$. Again, colors 4 and 5 must be used. Hence, we have $\chi_p(P_{2t} \Diamond_2 C_5)=5$ for $2\leq t \leq 4$.    

\noindent Now, let $(a')$, $(b')$, and $(c')$, denote the patterns $({\bf 1}\ 3\ 4\ 1\ {\bf 2})$, $({\bf 1}\ 3\ 4\ 2\ {\bf 5})$, $({\bf 1}\ 3\ 4\ 2\ {\bf 6})$, respectively. For $t \geq 5$, we use the pattern $(a')(b')(a')(c')$ for $\lfloor t/4\rfloor $ times and add $(a')$, $(a')(b')$, or $(a')(b')(a')$ for $i = 1, 2, 3$, respectively, for $t = 4k + i$ for some integer $k$. One can see that this results in a feasible packing $6$-coloring of $(P_{2t} \Diamond_2 C_5)$ for $t \geq 5$.

\end{proof}



\begin{theorem}
    Let $t\geq 1$ and $s\geq 2$ be integers. Then,
    $$\chi_p(P_{2t}\Diamond_2 C_{4s+1})\leq\left\{
\begin{array}{ll}
4, & 1\leq t \leq 3,\\
5, & t \geq 4.
\end{array}\right.
$$
In addition, equality holds for $t\in\{1,2,3\}$. 
\end{theorem}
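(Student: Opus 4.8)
The plan is to mirror Theorem~\ref{thm:l=2C4s}: prove the upper bounds by explicit periodic colorings in the bracket notation of Section~\ref{subsec:cycles}, and match them for $t\le 3$ with a lower bound. The one new structural ingredient, absent for $C_{4s}$, is that every copy of $C_{4s+1}$ has packing chromatic number $4$, so each block is forced to contain a vertex of color at least $4$. For the lower bound I would use that the packing chromatic number is monotone under taking subgraphs: restricting a packing coloring to a subgraph keeps every color class a packing, since deleting vertices and edges cannot decrease pairwise distances. Each copy of $C_{4s+1}$ is a subgraph of $P_{2t}\Diamond_2 C_{4s+1}$, and $4s+1\ge 9$ is neither $3$ nor a multiple of $4$, so by Proposition~\ref{prop:cycles} we get $\chi_p\ge\chi_p(C_{4s+1})=4$ for every $t\ge 1$; for $t=1$ the graph is $C_{4s+1}$ itself. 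Hence, once the upper bound $\chi_p\le 4$ is shown for $t\le 3$, equality follows there.

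For the upper bounds I would write each cycle as a block $({\bf a}\,c_1\cdots c_{4s-1}\,{\bf b})$ with $a=c(v_{2i-1})$, $b=c(v_{2i})$, and the $c_j$ listed along the cycle, then check the packing conditions in three places: within each block, along the path $v_1\cdots v_{2t}$, and across the connectors $v_{2i}v_{2i+1}$. In each block the mandatory color-$\ge 4$ vertex is placed either on one of the two shared path vertices (as the high colors sit on $v_{2i-1}$ in Theorem~\ref{thm:l=2C4s}) or deep inside the cycle, near the vertex antipodal to the shared edge $v_{2i-1}v_{2i}$. A deep high vertex lies at distance $2s\ge 4$ from both path vertices of its own cycle, so two deep high vertices in consecutive cycles are at distance at least $4s+1>4$; this lets color $4$ be reused whenever it is placed deep. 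Colors $1,2,3$ then fill everything else, following a period-$4$ pattern on the path so that the blocks repeat with period dividing $4$.

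For $t\le 3$ there are at most three cycles, and I would use at most one on-path high color together with deep color-$4$'s—for instance, for $t=3$ give the middle cycle its color-$4$ on a shared path vertex and the outer cycles a deep color-$4$, so that all color-$4$'s stay pairwise far, and then finish with $\{1,2,3\}$. For $t\ge 4$ the period-$4$ repetition produces infinitely many high vertices along the path at bounded mutual distance, and with only $\{1,2,3,4\}$ these cannot be separated together with the color-$3$ vertices crowded near the connectors; introducing color $5$ on one distinguished block per period resolves exactly these ties, just as color $5$ does in the $(a)(b)(a)(c)$ scheme of Theorem~\ref{thm:l=2C4s}. I would then repeat the four-block pattern $\lfloor t/4\rfloor$ times and append the appropriate $1$, $2$, or $3$ leftover blocks according to $t\bmod 4$.

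The hard part will be the connector analysis. Within a single block the coloring is a rotation of a fixed packing $4$-coloring of $C_{4s+1}$ (e.g.\ $[1\,2\,1\,3]^s4$) and is routine, and the path coloring is a standard periodic packing coloring; the difficulty is that the color classes must be packings of the \emph{whole} graph, while the shortest path between two consecutive cycles runs through the single edge $v_{2i}v_{2i+1}$, making the distance-$>3$ condition for color $3$ and the distance-$>4$ condition for color $4$ tight at every junction. The smallest cycle $C_9$ (the case $s=2$) is the tightest: there every color-$3$ vertex is forced to within distance $2$ of the shared edge, so the orientation of consecutive blocks and the exact position of each block's high color must be chosen carefully to keep all color-$3$'s at distance $>3$ across each connector. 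Verifying that a single periodic scheme meets all these bounds—with only $4$ colors for $t\le 3$ and $5$ for $t\ge 4$—is where the real work lies.
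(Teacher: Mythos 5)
Your overall route is the same as the paper's: a subgraph-monotonicity lower bound via $\chi_p(C_{4s+1})=4$ (Proposition~\ref{prop:cycles}), and explicit periodic block patterns for the upper bounds with a four-block period $(a)(b)(a)(c)$ and leftovers according to $t \bmod 4$, exactly in the style of Theorem~\ref{thm:l=2C4s}. The lower-bound half of your argument is complete and correct, and it is in fact slightly more explicit than the paper's one-line remark. Your structural description of where the high colors must go also matches what the paper actually does: in its block $(a)=({\bf 1}\ 3\ 2\ 1\ 4\,[1\ 3\ 1\ 2]^*\,1\ 3\ 1\ {\bf 2})$ the color $4$ sits at distance $2s$ from both shared path vertices (your ``deep'' placement), while in blocks $(b)$ and $(c)$ the colors $4$ and $5$ sit on the path vertex $v_{2i}$ itself.

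The genuine gap is that the upper bounds are never actually established: you describe a strategy for building the patterns but do not write any pattern down, and you explicitly defer the connector analysis (``where the real work lies''). For this theorem the explicit patterns \emph{are} the proof of the upper bound; without them, the claims $\chi_p\le 4$ for $t\le 3$ and $\chi_p\le 5$ for $t\ge 4$ are unsupported, and hence so is the equality for $t\in\{1,2,3\}$ (the lower bound alone only gives $\ge 4$). Your $t=3$ sketch also does not obviously close: after placing one color-$4$ per cycle, each $C_{4s+1}$ minus that vertex is a $P_{4s}$ to be filled with $\{1,2,3\}$, and the distance-$4$ condition on the color-$3$ vertices across the edge $v_{2i}v_{2i+1}$ is exactly the constraint you flag as unresolved; the paper resolves it by anchoring every block to start with ${\bf 1}$ on $v_{2i-1}$ and end with ${\bf 2}$ or a high color on $v_{2i}$, so that no color $3$ lands within distance $2$ of a connector on both sides simultaneously. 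Finally, your heuristic that $\{1,2,3,4\}$ cannot work for $t\ge 4$ is not needed (the theorem only claims $\le 5$ there) and is not proved, so it should either be dropped or clearly marked as motivation rather than as part of the argument.
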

\begin{proof}
For $t=1$, we have $P_{2t}\Diamond_2 C_{4s+1} \simeq C_{4s+1}$; hence, by Proposition \ref{prop:cycles}, its packing chromatic number is equal to 4. In addition, any packing coloring of $P_{2t}\Diamond_2 C_{4s+1}$ requires at least 4 colors, for any $t$. 

\noindent Now, let $(a)$, $(b)$, and $(c)$, denote the patterns
$({\bf 1}\ 3\ 2\ 1\ 4[1\ 3\ 1\ 2]^*1\ 3\ 1\ {\bf 2})$, $({\bf 1}\ 3\ 1\ 2\ 1\ [3\ 1\ 2\ 1]^*3\ 2\ 1\ {\bf 4})$, and $({\bf 1}\ 3\ 1\ 2\ 1\ [3\ 1\ 2\ 1]^*3\ 2\ 1\ {\bf 5})$, respectively.
For $2 \leq t \leq 4$, we use the patterns $(a)(b)$, $(a)(b)(a)$, and $(a)(b)(a)(c)$, respectively. For $t \in \{2, 3\}$, this yields an optimal packing 4-coloring.

\noindent For $t\geq 5$, we repeat the $(a)(b)(a)(c)$ pattern $\lfloor t/4\rfloor $ times and add $(a)$, $(a)(b)$, or $(a)(b)(a)$ for $i = 1, 2, 3$, respectively for $t = 4k + i$ for some integer $k$. In all cases, one can verify that the patterns ensure feasible packing 5-coloring. 
\end{proof}






\begin{theorem} Let $t\geq 1$ be an integer. Then,
 $$\chi_p(P_{3t}\Diamond_3 C_5)=\left\{
\begin{array}{ll}
4, & t=1,\\
5, & t \geq 2.
\end{array}\right.
$$
\label{thm:l3C5}
\end{theorem}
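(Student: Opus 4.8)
The case $t=1$ is immediate, since $P_3\Diamond_3 C_5\simeq C_5$ and $\chi_p(C_5)=4$ by Proposition~\ref{prop:cycles}. For $t\ge 2$ the plan is to prove the upper and lower bounds separately, and I expect the lower bound to carry the real content.

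For the upper bound I would exhibit an explicit periodic packing $5$-coloring and verify it directly. Writing the $i$-th copy of $C_5$ as $v_{3i-2}v_{3i-1}v_{3i}u_iw_i$ with cycle edges $v_{3i}u_i$, $u_iw_i$, $w_iv_{3i-2}$ (so that $v_{3i}v_{3i+1}$ is the bridge joining consecutive copies), I would set, for every $i$, $c(v_{3i-1})=c(u_i)=1$, $c(v_{3i-2})=2$, $c(w_i)=3$, and $c(v_{3i})=4$ when $i$ is odd and $c(v_{3i})=5$ when $i$ is even. Then color $1$ occupies an independent set; the color-$2$ vertices $\{v_{3i-2}\}$ are pairwise at distance $\ge 3$; the color-$3$ vertices $\{w_i\}$ are at distance $\ge 4$ (consecutive ones exactly $4$ apart); and the color-$4$ (respectively color-$5$) vertices lie on ends of copies two apart, hence at distance $6\ge 5$ (respectively $6$). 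Checking these distances is routine and gives $\chi_p\le 5$ for $t\ge2$, and exactly $4$ when $t=1$ since color $5$ never appears.

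For the lower bound I would show that four colors are impossible whenever $t\ge2$. Since each $C_5$ is an induced subgraph, the restriction of any packing $4$-coloring to it is a packing coloring of $C_5$, and because $\chi_p(C_5)=4$ it must use all of $1,2,3,4$, with color $1$ on a maximum independent set (two vertices) and colors $2,3,4$ once each. I would then focus on the first two copies, which meet only in the bridge $v_3v_4$. For $x$ in the first copy and $y$ in the second, every path between them crosses this bridge, so $d(x,y)=d_1(x)+1+d_2(y)$, where $d_1,d_2$ measure distance to $v_3,v_4$ inside the respective copies; the $d_1$-values are $0,1,1,2,2$, and likewise for $d_2$. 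Because each copy needs a color-$4$ vertex and $d(\cdot,\cdot)\ge 5$ forces $d_1+d_2\ge4$, the color-$4$ vertices must sit on the two positions of $d$-value $2$.

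The heart of the argument is a local invariant: in the first copy, if $a$ and $b$ denote the $d_1$-values of the color-$2$ and color-$3$ vertices, then $a+b\le 2$. This follows by counting, using that the $d_1$-values sum to $6$, the color-$4$ vertex contributes $2$, and the two color-$1$ vertices form a size-$2$ independent set, every such set having $d_1$-sum at least $2$; hence $a+b\le 6-2-2=2$. The identical bound $c+e\le 2$ holds in the second copy. On the other hand, the cross constraints for colors $2$ and $3$ give $a+c\ge 2$ and $b+e\ge3$, so $(a+b)+(c+e)=(a+c)+(b+e)\ge 5$, contradicting $(a+b)+(c+e)\le 4$. I expect this counting step—establishing that color $4$ is forced to the extreme positions and that the color-$1$ vertices always cost $d$-value at least $2$—to be the main obstacle; once it is in place the contradiction is immediate, and it applies verbatim to the first two copies for every $t\ge2$, since those distances are unaffected by the remaining copies.
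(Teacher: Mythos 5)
Your proof is correct, and while the overall skeleton matches the paper's (an explicit periodic pattern for the upper bound, plus an argument that two consecutive copies of $C_5$ cannot be packing $4$-colored for the lower bound), your lower-bound argument takes a genuinely different route. The paper argues by cases on the colors of the two degree-$3$ vertices $v_3,v_4$: if one of them gets color $1$, its three neighbors must get $2,3,4$ and the color-$4$ neighbor has eccentricity at most $4$, so the other copy cannot realize color $4$; if neither gets color $1$, one of them gets $3$ or $4$, which again cannot recur because of small eccentricity, so the other copy is starved of that color. Your argument instead fixes the bridge decomposition $d(x,y)=d_1(x)+1+d_2(y)$, uses the forced color multiset $\{1,1,2,3,4\}$ on each induced $C_5$, and derives a clean numerical contradiction ($(a+b)+(c+e)\le 4$ versus $(a+c)+(b+e)\ge 5$) from the total $d$-weight $6$ of each copy, the forced weight $2$ of the color-$4$ vertex, and the minimum weight $2$ of any independent pair. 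Both are sound; the paper's version is shorter but relies on ad hoc eccentricity checks stated rather tersely, whereas yours is more self-contained and mechanical, and the same weighting scheme looks reusable for other lower bounds in this family. Your upper-bound pattern also differs from the paper's (you place $4$ and $5$ on $v_{3i}$ alternately, the paper uses the pattern $1\ 5\ 1\ 4\ 1\ 2$ along the path), but both verify routinely.
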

\begin{proof}
For $t=1$, we have $P_{3t}\Diamond_3 C_{5} \simeq C_5$. Therefore, by Proposition \ref{prop:cycles}, its packing chromatic number is equal to 4. For $t=2$, we use the following coloring pattern: $({\bf 1}\ 3\ 2\ \overleftarrow{{\bf 1\ 5}})({\bf 4}\ 3\ 1\ \overleftarrow{{\bf 2\ 1}})$. For $t \geq 3$, we repeat this pattern as many times as necessary; that is,  $\lfloor t/2 \rfloor$ times, with the initial part of the pattern added if required. One can observe that even in the case of the packing coloring of $P_6 \Diamond_3 C_5$, fewer than five colors cannot be used. Indeed, if any vertex of degree 3 in $P_6 \Diamond_3 C_5$ is assigned color 1, its three neighbors must receive colors $2,3$ and $4$. However, then color 4 cannot be repeated, as there is no other vertex at distance at least 5, that is, one of the two cycles $C_5$ cannot be assigned color 4.  Since $\chi_p(C_5)=4$, color 5 must be used in such a case. On the other hand, suppose we avoid assigning color~$1$ to the vertices of degree~$3$ in $P_6 \Diamond_3 C_5$. Note that at least one of these vertices must then be assigned color~$3$ or~$4$. The color used in this case cannot be reused, since there is no other vertex at distance at least 4 in $P_6 \Diamond_3 C_5$. Consequently, color 5 must be used. Therefore, $\chi_p(P_{3t} \Diamond_3 C_5)=5$ for $t \geq 2$.
\end{proof}
\begin{theorem}
    Let $t \geq 1$ and $s\geq 2$ be integers. Then, 
  $$\chi_p(P_{3t}\Diamond_3 C_{4s+1})\leq\left\{
\begin{array}{ll}
4, & 1 \leq t \leq 2,\\
5, & t \geq 3,
\end{array}\right.
$$  
In addition, equality holds for $t \in \{1,2\}$.
\label{thm:l3C4s+1}
\end{theorem}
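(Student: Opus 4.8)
The plan is to follow the same template as the preceding path-aligned cycle theorems: establish the upper bounds by exhibiting explicit periodic packing colorings built from one ``block'' per cycle, and obtain the matching lower bound for small $t$ from the cycle that sits inside the product. The case $t=1$ is immediate, since $P_{3}\Diamond_3 C_{4s+1}\simeq C_{4s+1}$ and, because $4s+1\ge 9$ is neither $3$ nor a multiple of $4$, Proposition~\ref{prop:cycles} gives $\chi_p=4$; this simultaneously settles the bound and the equality for $t=1$.

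For the upper bounds I would exploit the fact that, with $\ell=3$, each copy of $C_{4s+1}$ splits into its three path vertices $v_{3i-2},v_{3i-1},v_{3i}$ and a long arc of $4s-2\ge 6$ cycle-only vertices joining $v_{3i}$ back to $v_{3i-2}$. The arc, being internally a path, can be colored by a period-$4$ pattern drawn from $\{1,2,3\}$ such as $[1\,3\,1\,2]$, which satisfies all packing constraints along a path and leaves only the three path vertices and the junctions $v_{3i}v_{3i+1}$ to be handled with the higher colors. I would encode the colorings in the block notation of Section~\ref{subsec:cycles} (bold colors on $P_{3t}$, the $[\,]^{*}$ arc pattern, and the $\overleftarrow{\ \ }$ path vertices), designing one block per cycle. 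For $t=2$ the target is a packing $4$-coloring: the long arcs absorb colors $1,2,3$, color $4$ is placed at most once per cycle on a path/junction vertex, and since $4s+1\ge 9$ the two occurrences of color $4$ can be forced at least $5$ apart, so no fifth color is needed. For $t\ge 3$ I would introduce a fifth color and use a short list of blocks $(a),(b),(c),(d)$ (in the spirit of Theorems~\ref{thm:l=2C4s} and~\ref{thm:l3C5}) assembled with period $4$: repeat $(a)(b)(c)(d)$ exactly $\lfloor t/4\rfloor$ times and append $(a)$, $(a)(b)$, or $(a)(b)(c)$ according to $t\bmod 4$, placing colors $4$ and $5$ only at alternating junctions so that each is used at mutual distance at least $5$ (resp.\ $6$).

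The routine-but-essential part is verifying that the assembled pattern is a genuine packing coloring: one checks, block by block and across the $v_{3i}v_{3i+1}$ junctions, that any two vertices sharing a color $i\in\{1,2,3\}$ are at distance $\ge i+1$ (this is local and follows from the periodicity of the arc pattern together with the placement of the bold colors), and that the sparse colors $4$ and $5$ keep their global separation. The main obstacle is precisely the $t=2$ case with only four colors: the two cycles meet at the edge $v_3v_4$, and one must show that this junction can be resolved without a fifth color, i.e.\ that the forced color-$4$ vertices of the two blocks can be positioned at distance $\ge 5$ while colors $1,2,3$ remain packing-valid across the junction. The hypothesis $s\ge 2$ (arc length $\ge 6$) is exactly what provides the room to do so.

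Finally, the lower bound for $t\in\{1,2\}$ comes from monotonicity of $\chi_p$ under subgraphs: if $H\subseteq G$ then $d_H(u,v)\ge d_G(u,v)$, so any packing coloring of $G$ restricts to one of $H$, whence $\chi_p(H)\le\chi_p(G)$. Each copy of $C_{4s+1}$ is a subgraph (indeed isometric) of $P_{3t}\Diamond_3 C_{4s+1}$, so $\chi_p\bigl(P_{3t}\Diamond_3 C_{4s+1}\bigr)\ge\chi_p(C_{4s+1})=4$ by Proposition~\ref{prop:cycles}. Combined with the constructions above, this yields $\chi_p=4$ for $t\in\{1,2\}$, completing the proof.
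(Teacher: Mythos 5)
Your overall strategy is the same as the paper's (one coloring block per cycle, assembled periodically, with the lower bound coming from the embedded $C_{4s+1}$), and your lower-bound argument is actually carried out more carefully than in the paper: subgraph monotonicity of $\chi_p$ together with Proposition~\ref{prop:cycles} cleanly gives $\chi_p\ge 4$ for all $t$, hence equality for $t\in\{1,2\}$ once the constructions exist. The $t=1$ case is also complete.

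The gap is in the upper bound: you describe how you \emph{would} design the blocks, but you never exhibit them, and for a theorem of this type the explicit pattern \emph{is} the proof. This matters most exactly where you yourself locate the difficulty, the packing $4$-coloring for $t=2$: you assert that ``the two occurrences of color $4$ can be forced at least $5$ apart'' across the junction $v_3v_4$ while colors $1,2,3$ stay packing-valid on both arcs and on the six path vertices, but you give no placement that achieves this, and it is not obvious that one exists (note that the two arcs interact through \emph{both} junction edges of each cycle, so the period-$4$ arc patterns on consecutive cycles must be phased compatibly with the bold path colors at both ends). The paper resolves this by writing down a concrete pattern, e.g.\ for $t=2$ it uses $({\bf 3}\ 1\ 4\ 2\ 1\ 3\ 1\ [2\ 1\ 3\ 1]^*\overleftarrow{{\bf 2\ 1}})({\bf 1}\ 3\ 1\ 2\ 1\ 3\ [1\ 2\ 1\ 3]^*1\ \overleftarrow{{\bf 2\ 4}})$, and for $t\ge 3$ it specifies four explicit blocks $(a),(b),(c),(d)$ (each containing one occurrence of $4$ and, in $(a)$ and $(c)$, one occurrence of $5$) assembled exactly as you propose. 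To turn your proposal into a proof you would need to supply such blocks and then perform the block-by-block and cross-junction distance check you correctly identify as ``routine but essential''; as written, the existence of the claimed colorings is only asserted.
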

\begin{proof}
For $t=1$, we have $P_{3t}\Diamond_3 C_{4s+1} \simeq C_{4s+1}$. Hence, by Proposition \ref{prop:cycles}, its packing chromatic number is equal to 4. 
\\For $t=2$, the coloring pattern is as follows: $({\textbf 3}\ 1\ 4\ 2\ 1\ 3\ 1\ [2\ 1\ 3\ 1]^*\overleftarrow{{\bf 2\ 1}})$ $({\textbf 1}\ 3\ 1\ 2\ 1\ 3\ [1\ 2\ 1\ 3]^*1\ \overleftarrow{{\bf 2\ 4}})$.
One can verify that this leads to an optimal packing 4-coloring.  

\noindent For $t \geq 3$, let $(a)$, $(b)$, $(c)$, and $(d)$ denote the patterns $({\bf 1}\ 4\ 2\ 1\ 3\ 1\ [2\ 1\ 3\ 1]^*2\ \overleftarrow{{\bf 1\ 5}})$, $({\bf 3}\ 1\ 2\ 1\ 3\ 1\ [2\ 1\ 3\ 1]^*4\ \overleftarrow{{\bf 2\ 1}})$, $({\bf 1}\ 3\ 1\ 2\ 1\ 3\ [1\ 2\ 1\ 3]^*4\ \overleftarrow{{\bf 1\ 5}})$, $({\bf 2}\ 1\ 3\ 4\ 1\ 2\ [1\ 3\ 1\ 2]^*1\ \overleftarrow{{\bf 3\ 1}})$, respectively. We repeat the $(a)(b)(c)(d)$ pattern $\lfloor t/4 \rfloor$ times and add $(a)$, $(a)(b)$, or $(a)(b)(c)$ for $i = 1, 2, 3$, respectively, for $t = 4k + i$ and some integer $k$.

\noindent One can verify that this pattern leads to a feasible packing 5-coloring.

\end{proof}

\subsubsection{Cycles $C_{4s+2}$ for $s\geq 1$}
\begin{theorem}
    Let $t\geq 1$ and $s\geq 1$ be integers. Then,
  $$\chi_p(P_{2t}\Diamond_2 C_{4s+2})\leq\left\{
\begin{array}{ll}
4, & 1 \leq t \leq 2,\\
5, & t \geq 3.
\end{array}\right.
$$ 
In addition, equality holds for $t \in \{1,2\}$. 
\end{theorem}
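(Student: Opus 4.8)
The plan is to treat the three regimes separately, first establishing the common lower bound and then exhibiting explicit periodic colorings for the upper bounds. For the lower bound I would use that the packing chromatic number is monotone under taking subgraphs: if $H$ is a subgraph of $G$, then $d_H(u,v)\ge d_G(u,v)$ for all $u,v\in V(H)$, so restricting any packing coloring of $G$ to $V(H)$ yields a packing coloring of $H$, whence $\chi_p(H)\le \chi_p(G)$. Each of the $t$ cycle-blocks in $P_{2t}\Diamond_2 C_{4s+2}$ is a copy of $C_{4s+2}$, and since $4s+2\equiv 2\pmod 4$ and $4s+2\neq 3$ we have $\chi_p(C_{4s+2})=4$ by Proposition~\ref{prop:cycles}; hence $\chi_p(P_{2t}\Diamond_2 C_{4s+2})\ge 4$ for every $t\ge 1$. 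Combined with the upper bounds, this gives equality exactly when the upper bound equals $4$, i.e.\ for $t\in\{1,2\}$. In particular, for $t=1$ we have $P_2\Diamond_2 C_{4s+2}\simeq C_{4s+2}$, so the value is $4$ directly.

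The remaining work is the two upper-bound constructions, which I would phrase with the block notation $(\mathbf{x}\,c_1\,c_2\cdots c_{4s}\,\mathbf{y})$ for $\ell=2$, where $\mathbf{x},\mathbf{y}$ color the two path vertices of a block and $c_1,\dots,c_{4s}$ color its $4s$ arc vertices. Following Theorem~\ref{thm:l=2C4s}, the guiding idea is to put color $1$ on an independent set meeting (nearly) every other vertex, to fill the remaining arc vertices with a period-four $2\,1\,3\,1$ motif, and to spend exactly one occurrence of a large color per cycle to absorb the parity defect that forces $\chi_p(C_{4s+2})=4$. For $t=2$ I would give a single pair of blocks over the palette $\{1,2,3,4\}$, placing the two color-$4$ vertices deep inside the two distinct arcs so that their distance, which must route through the path, is at least $5$; since $4s+2\ge 6$ the arcs are long enough for this. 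For $t\ge 3$ I would use a period-four sequence of block types that rotates the large color through the values $4$ and $5$ (in the spirit of the $(a)(b)(a)(c)$ scheme of Theorem~\ref{thm:l=2C4s}), so that equal large colors in nearby blocks stay at the required distance, together with a short prefix or suffix to cover the residue of $t$ modulo $4$.

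The main obstacle will be the verification rather than the design, and it concentrates on two boundary effects peculiar to this product. First, each block closes into a cycle, so the chosen arc pattern must remain packing-valid once it ``wraps'' around through the base edge $v_{2i-1}v_{2i}$ and meets the two path vertices; the tightest checks are the distances between the two color-$1$ ends of the arc and between repeated color-$2$ and color-$3$ vertices that become close through this base edge. Second, consecutive blocks are glued along the path edge $v_{2i}v_{2i+1}$, so I must confirm that colors $2$ and $3$ near the end of one arc are far enough from their counterparts at the start of the next, and, most delicately, that the large colors $4$ and $5$ keep distances at least $5$ and at least $6$ respectively even in the extreme case $s=1$, i.e.\ the short cycle $C_6$, where each arc supplies only four interior vertices and the available slack is smallest. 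Treating this smallest case with care, and checking that the period-four rotation never brings two equal large colors within the forbidden distance, is where the argument needs the most attention.
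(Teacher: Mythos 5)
Your lower-bound argument is correct and is essentially the paper's: $C_{4s+2}$ is a subgraph of every block, packing colorings restrict to subgraphs since distances only grow, and $\chi_p(C_{4s+2})=4$ by Proposition~\ref{prop:cycles}, so four colors are always necessary and equality follows wherever the upper bound is $4$. The $t=1$ case via the isomorphism with $C_{4s+2}$ also matches the paper.

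The gap is in the upper bounds, which are the actual content of the theorem: you describe design principles (color $1$ on a near-dominating independent set, a $2\,1\,3\,1$ motif on the arcs, one large color per block, a periodic rotation of $4$ and $5$) but you never write down a single explicit block pattern, and you explicitly defer the verification, which you yourself identify as the hard part. As it stands nothing is proved beyond $\chi_p\ge 4$. For comparison, the paper exhibits concrete patterns: for $t=2$ it uses $({\bf 1}\; 2\; 4\; [1\;2\;1\;3]^*\, 1\; 2\; {\bf 3})({\bf 1}\; 4\; 1\; [3\; 1\; 2\; 1]^*\,3\; 1\; {\bf 2})$, and for $t\ge 3$ the variant $({\bf 1}\; 5\; 4\; [1\;2\;1\;3]^*\, 1\; 2\; {\bf 3})({\bf 1}\; 4\; 1\; [3\; 1\; 2\; 1]^*\,3\; 1\; {\bf 2})$ repeated with period \emph{two} blocks, so that color $4$ appears in every block (at arc depth at least $2$, giving distance at least $5$ across the shared path edge) and color $5$ only in every other block; the path vertices simply carry the optimal $3$-packing pattern of $P_{2t}$. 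Your proposed period-four rotation in the spirit of the $(a)(b)(a)(c)$ scheme of Theorem~\ref{thm:l=2C4s} is not obviously wrong, but it is a different and unverified scheme, and the boundary checks you list (wrap-around through the base edge, gluing of consecutive arcs, the tight case $s=1$) are exactly the steps that must be carried out for the proof to exist. Until an explicit pattern is written and those distances are checked, the claimed bounds of $4$ and $5$ are unestablished.
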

\begin{proof}
For $t=1$ we have $P_{2t}\Diamond_2 C_{4s+2} \simeq C_{4s+2}$. Therefore, by Proposition \ref{prop:cycles}, its packing chromatic number is equal to 4.  
\\For $t=2$, the coloring pattern is as follows: $$({\bf 1}\; 2\; 4\; [1\;2\;1\;3]^* 1\; 2\; {\bf 3})({\bf 1}\; 4\; 1\; [3\; 1\; 2\; 1]^*3\; 1\; {\bf 2}).$$
It can be easily verified that the pattern results in a proper packing coloring. Moreover, since $C_{4s+2}$ is a subgraph of this graph and requires four colors, the 4-packing coloring of $P_{2t}\Diamond_2 C_{4s+2}$ obtained according to the given pattern is optimal.
\\ For $t\geq 3$, we use another coloring pattern: $$({\bf 1}\; 5\; 4\; [1\;2\;1\;3]^* 1\; 2\; {\bf 3})({\bf 1}\; 4\; 1\; [3\; 1\; 2\; 1]^*3\; 1\; {\bf 2}).$$ We repeat this pattern $\lfloor t/2 \rfloor$ times and apply the relevant initial part of this pattern for the remaining uncolored part of $P_{2t} \Diamond_2 C_{4s+2}$, if needed.

One can verify that all the patterns ensure a feasible 4- or 5-packing coloring.
\end{proof}

Note that the pattern corresponding to the optimal packing $3$-coloring of the path $P_{2t}$ is utilized to construct the packing $5$-coloring of $P_{2t} \Diamond_2 C_{4s+2}$.


\begin{theorem} Let $t \geq 1$ be an integer. Then,
    $$\chi_p(P_{3t}\Diamond_3 C_6)\leq\left\{
\begin{array}{ll}
4, & 1 \leq t \leq 2,\\
5, & t \geq 3.
\end{array}\right.
$$
In addition, the equality holds for $t \in \{1,2\}$. 
\end{theorem}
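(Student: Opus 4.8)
The plan is to treat the three ranges of $t$ separately, following the pattern of the earlier theorems in this section. Throughout I write the $i$-th copy of $C_6$ as the cycle $p_iq_ir_ix_iy_iz_i$, where $p_i,q_i,r_i$ are the three path vertices (so the bridge edge joins $r_i$ to $p_{i+1}$) and $x_i,y_i,z_i$ are the remaining three vertices, with $x_i$ adjacent to $r_i$ and $z_i$ adjacent to $p_i$. For $t=1$ we have $P_3\Diamond_3 C_6\simeq C_6$, and since $6=4\cdot 1+2$ is neither a multiple of $4$ nor equal to $3$, Proposition~\ref{prop:cycles} gives $\chi_p(C_6)=4$. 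The lower bound for $t=2$ rests on the observation that each copy of $C_6$ is an \emph{isometric} subgraph: two hexagons meet only through a single bridge edge, so any shortest path between two vertices of one hexagon stays inside it, whence $d$ is unchanged. Restricting a packing coloring of the whole graph to one hexagon therefore yields a packing coloring of $C_6$, so $\chi_p(P_6\Diamond_3 C_6)\ge \chi_p(C_6)=4$.

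For the matching upper bound at $t=2$ I would exhibit the explicit $4$-coloring $p_1=1,q_1=2,r_1=1,x_1=4,y_1=1,z_1=3$ and $p_2=3,q_2=1,r_2=2,x_2=4,y_2=1,z_2=2$, and verify the four color classes directly: color $1$ occupies $\{p_1,r_1,y_1,q_2,y_2\}$, which is independent; color $2$ occupies $\{q_1,r_2,z_2\}$, pairwise at distance at least $3$; the two occurrences of color $3$ are $z_1,p_2$ at distance exactly $4$; and the two occurrences of color $4$ are $x_1,x_2$ at distance exactly $5$. Thus $\chi_p(P_6\Diamond_3 C_6)=4$.

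For $t\ge 3$ I would give one periodic $5$-coloring valid for all such $t$: set $q_i=x_i=z_i=1$, $\ p_i=2$, $\ y_i=4$ for every $i$, and let $r_i=3$ for odd $i$ and $r_i=5$ for even $i$. The verification uses a few fixed distances in the chain: the color-$1$ vertices $q_i,x_i,z_i$ have only non-$1$ neighbours, so that class is independent; color $2$ sits on the vertices $p_i$ with $d(p_i,p_{i+1})=3$; color $4$ sits on the ``arc apices'' $y_i$ with $d(y_i,y_{i+1})=5$ (the only way between consecutive apices runs $y_i\,x_i\,r_i\,p_{i+1}\,z_{i+1}\,y_{i+1}$); and since $d(r_i,r_{i+1})=3<4$ the vertices $r_i$ cannot all carry color $3$, so I alternate $3$ and $5$, which works because same-colored $r$'s are then $6$ apart, meeting the constraints of colors $3$ and $5$ exactly. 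In the paper's block notation this is the two-block unit $({\bf 2}\ 1\ 4\ 1\ \overleftarrow{{\bf 3\ 1}})({\bf 2}\ 1\ 4\ 1\ \overleftarrow{{\bf 5\ 1}})$ repeated $\lfloor t/2\rfloor$ times, with one extra odd block appended when $t$ is odd.

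The main obstacle is the $t=2$ case, not the general periodic construction. For $t\ge 3$ there is ample slack, since colors $4$ and $5$ are available and the chain is long enough to space high colors apart. At $t=2$, however, only four colors are allowed while each hexagon individually forces all four, so the unique color-$4$ and unique color-$3$ vertices of each hexagon must simultaneously reach across the single bridge with essentially no room to spare; a naive ``antipodal-triple'' coloring of color $1$ in each hexagon provably fails, and finding the asymmetric assignment above (with the two copies of $C_6$ colored differently) is the delicate step. A secondary point to check carefully is that several distances in the periodic pattern ($3$ for color $2$, $5$ for color $4$, $6$ for colors $3$ and $5$) are tight, so one must confirm that no arc detour ever produces a strictly shorter path than the one along the spine.
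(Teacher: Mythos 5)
Your proposal is correct, and the overall strategy coincides with the paper's: identify $t=1$ with $C_6$, give an explicit $4$-coloring for $t=2$ with the lower bound coming from $\chi_p(C_6)=4$, and give a periodic $5$-coloring for $t\ge 3$. Your $t=2$ coloring is in fact vertex-for-vertex the same as the paper's pattern $({\bf 1}\;3\;1\;4\;\overleftarrow{{\bf 1\;2}})({\bf 3}\;2\;1\;4\;\overleftarrow{{\bf 2\;1}})$. Where you genuinely diverge is the $t\ge 3$ construction: the paper uses a period-four block pattern $(a)(b)(c)(d)$ with a case split on $t\bmod 4$, whereas your coloring is uniform on every hexagon except for alternating $3/5$ on the vertices $r_i$, giving a period-two pattern that needs no residue analysis; I verified the tight distances ($3$ for color $2$, $5$ for color $4$, $6$ for color $5$) and they hold, since the chain structure forces every inter-hexagon shortest path through the bridge edges. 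Two minor remarks: the isometry of each hexagon is not actually needed for the lower bound, since restricting a packing coloring of $G$ to any subgraph $H$ is automatically a packing coloring of $H$ (distances can only increase in $H$), which is exactly the one-line argument the paper uses; and your closing worry about the $t=2$ case being the delicate step is well placed, but the asymmetric assignment you found is precisely the paper's.
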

\begin{proof}
For $t=1$, we have $P_{3t} \Diamond_3 C_6 \simeq C_6$, and by Proposition \ref{prop:cycles}, its packing chromatic number is equal to 4.
\\For $t=2$, the coloring pattern is as follows:
$$({\bf 1}\; 3\; 1\; 4\; \overleftarrow{{\bf 1\;2}})({\bf 3}\; 2\; 1\; 4\; \overleftarrow{{\bf 2\;1}}).$$
It can be easily verified that the pattern ensures a proper packing coloring.
Moreover, since $C_6$ is a subgraph of $P_{3t} \Diamond_3 C_6$ and $\chi_p(C_6)=4$ by Proposition \ref{prop:cycles}, the 4-packing coloring of the graph obtained according to the given pattern is optimal.
\\For $t\geq 3$, we use another coloring pattern. 
Let $(a)$, $(b)$, $(c)$, and $(d)$ denote the patterns
$({\bf 1}\;5\;1\;4\;\overleftarrow{{\bf 1\;3}})$, $({\bf 2}\;1\;5\;1\;\overleftarrow{{\bf 3\; 1}})$, $({\bf 1}\; 4\;1\;5\;\overleftarrow{{\bf 1\;2}})$, and $({\bf 3}\;1\;4\;1\;\overleftarrow{{\bf 2\; 1}})$ respectively. 
For $t \equiv 0 \pmod{4}$, we repeat the $(a)(b)(c)(d)$ pattern $t/4$ times. Otherwise, we repeat the $(a)(b)(c)(d)$ pattern $\lfloor t /4 \rfloor$ times and add $(a)$, $(a)(b)$ or $(a)(b)(c)$ for $t \equiv 1, 2, 3 \pmod{4}$, respectively.  
One can verify that the pattern results in a feasible 5-packing coloring.
\end{proof}
\begin{theorem}
    Let $t\geq 1$ and $s\geq 2$ be integers. Then, 
    $$\chi_p(P_{3t}\Diamond_3 C_{4s+2})\leq\left\{
\begin{array}{ll}
4, & 1 \leq t \leq 3,\\
5, & t \geq 4.
\end{array}\right.
$$
In addition, the equality holds for $t \in \{1,2,3\}$. 
\end{theorem}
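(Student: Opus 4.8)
The plan is to prove each upper bound by exhibiting an explicit periodic packing coloring in the $\ell=3$ block notation, and to obtain the matching lower bounds for $t\in\{1,2,3\}$ from a subgraph argument. For $t=1$ we have $P_{3}\Diamond_3 C_{4s+2}\simeq C_{4s+2}$, so Proposition~\ref{prop:cycles} gives $\chi_p=4$ directly (here $4s+2\ge 10$ is even, is not a multiple of $4$, and is not $3$), which settles both bounds at once. For the lower bound when $t\in\{2,3\}$ I would use that each copy of $C_{4s+2}$ is a subgraph of the product and that the packing chromatic number is monotone under subgraphs: deleting edges never decreases distances, so the restriction of any packing coloring of the product to the vertices of one cycle is again a packing coloring of that $C_{4s+2}$. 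Hence $\chi_p(P_{3t}\Diamond_3 C_{4s+2})\ge \chi_p(C_{4s+2})=4$, and together with the upper bound this yields equality for $t\in\{1,2,3\}$.

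The substance of the proof is the construction of the colorings. In the $\ell=3$ encoding each cycle occupies one parenthesized block $({\bf c(v_j)}\ [\text{arc colors}]\ \overleftarrow{{\bf c(v_{j+2})\ c(v_{j+1})}})$, whose arc carries the $4s-1$ off-path vertices. The design principle is to color the long arc with a shifted copy of the optimal $\{1,2,3\}$ packing coloring of a path (the periodic fragment $2\ 1\ 3\ 1$), so that colors $1,2,3$ already obey their constraints there, and to insert the scarce long-range colors near the far junction of the arc. For the $4$-colorings ($t\le 3$) I would take blocks of the shape $({\bf 1}\ 3\ 1\ [2\ 1\ 3\ 1]^{s-1}\ 4\ \overleftarrow{{\bf 1\ 2}})$; note that the arc then has length $2+4(s-1)+1=4s-1$, matching the number of off-path vertices exactly, which is the first consistency check. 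Because the two path vertices $v_{3i}$ and $v_{3i+1}$ joining consecutive cycles are adjacent, the blocks cannot be identical; they must be phase-shifted so that these junction vertices receive distinct colors and so that the (at most three) vertices colored $4$ lie pairwise at distance greater than $4$. With only two or three cycles this is easily arranged, giving a feasible packing $4$-coloring for $t\in\{2,3\}$.

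For $t\ge4$ I would give a cleanly periodic packing $5$-coloring, mirroring the construction used for $C_6$ in the preceding theorem: four block types $(a),(b),(c),(d)$, each a cyclic relabeling of $\{1,2,3\}$ along the arc together with a single controlled occurrence of colors $4$ and $5$, chosen so that across a full period the $4$'s stay more than $4$ apart and the $5$'s more than $5$ apart. The pattern $(a)(b)(c)(d)$ is then repeated $\lfloor t/4\rfloor$ times, appending the prefix $(a)$, $(a)(b)$, or $(a)(b)(c)$ for $t\equiv 1,2,3\pmod 4$, exactly as in the earlier cases.

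The main obstacle is verifying the two long-range colors. The short-range colors $1,2,3$ are handled locally: their constraints follow from the path fragment on each arc, supplemented by a finite check at each of the two places where the arc meets the bold path colors of its own block. The delicate part is ensuring, after concatenation, that every two vertices colored $4$ satisfy $d>4$ and every two colored $5$ satisfy $d>5$ — in particular when consecutive cycles place such colors on opposite sides of a junction, where the relevant distance runs through the shared $P_3$ segment rather than around an arc. I expect this, as in the analogous theorems above, to reduce to a finite residue computation modulo the period $4$, after which a routine verification that the stated pattern yields a feasible packing coloring completes the argument.
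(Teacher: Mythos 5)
Your overall strategy coincides with the paper's: the isomorphism $P_3\Diamond_3 C_{4s+2}\simeq C_{4s+2}$ for $t=1$, the lower bound via the fact that each copy of $C_{4s+2}$ is a subgraph (so $\chi_p(C_{4s+2})=4$ forces at least four colors for all $t$), and explicit periodic block patterns for the upper bounds, with the $(a)(b)(c)(d)$ repetition plus a prefix for $t\ge 4$. The $t=1$ case and the subgraph monotonicity argument are correct and complete as you state them.

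The gap is that the constructions, which are the entire substance of the upper bounds, are never actually produced. For $t\in\{2,3\}$ you propose the single block shape $({\bf 1}\ 3\ 1\ [2\ 1\ 3\ 1]^{s-1}\ 4\ \overleftarrow{{\bf 1\ 2}})$; as you yourself observe, it cannot be repeated verbatim (the path colors $1\ 2\ 1\,|\,1\ 2\ 1$ put two adjacent $1$'s at the junction), so the claim reduces to ``suitably phase-shifted blocks exist,'' which is asserted (``easily arranged'') rather than demonstrated. The phase-shifting is not free: shifting the arc pattern of $\{1,2,3\}$ changes which arc vertex is forced out of $\{1,2,3\}$ and hence where the color $4$ sits, and one must simultaneously keep the path colored as a valid $3$-packing of $P_9$, keep each cycle's single $4$ at pairwise distance at least $5$ from the others, and avoid a fifth color. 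The paper resolves this by exhibiting three concrete, mutually compatible blocks (with path colors $1\ 2\ 1\ 3\ 1\ 2\ 1\ 3\ 1$ and the $4$'s placed at the end, middle, and start of the respective arcs) and checking the distances. Likewise for $t\ge 4$ you describe only the intended shape of the four $5$-color blocks and defer their existence to ``a finite residue computation''; the paper writes the four blocks out explicitly. Since these patterns do exist, your outline is repairable, but as written the proof of the upper bounds for $t\ge 2$ is incomplete.
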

\begin{proof}
    For $t = 1$, we have $P_{3t}\Diamond_{3} C_{4s+2} \simeq C_{4s+2}$, and by Proposition \ref{prop:cycles}, its packing chromatic number is equal to 4.
\\For $t\in\{2,3\}$, we use the relevant part of the pattern:
$$({\bf 1}\;3\;1\;2\;1\;3\;[1\;2\;1\;3]^*1\;4\;\overleftarrow{{\bf 1\; 2}})({\bf 3}\; 2\;1\;4\;1\;2\;3\;1\;[2\;1\;3\;1]^*\overleftarrow{{\bf 2\;1}})({\bf 1}\; 4\;1\;2\;1\;3\;[1\;2\;1\;3]^*1\;2\;\overleftarrow{{\bf 1\;3}}).$$
It can be easily verified that the pattern ensures a feasible packing coloring.
Moreover, since $C_{4s+2}$ is a subgraph of $P_{3t}\Diamond_{3} C_{4s+2}$ and $\chi_p(C_{4s+2})=4$, the 4-packing coloring of the graph $P_{3t}\Diamond_3 C_{4s+2}$, where $t \leq 3$, obtained
according to the given pattern is optimal.
\\For $t\geq 4$, we use another coloring pattern. 
Let $(a)$, $(b)$, $(c)$, and $(d)$ denote the patterns
$({\bf 1}\;5\;1\;3\;1\;2\;[1\;3\;1\;2]^*
1\;4\;\overleftarrow{{\bf 1\;3}})$, $({\bf 2}\;1\;3\;1\;2\;1\;[3\;1\;2\;1]^*
5\;1\;\overleftarrow{{\bf 3\; 1}})$, $({\bf 1}\; 4\;1\;2\;1\;3\;[1\;2\;1\;3]^*
1\;5\;\overleftarrow{{\bf 1\;2}})$, and $({\bf 3}\;1\;2\;1\;3\;[1\;2\;1\;3]^*
1\;4\;1\;\overleftarrow{{\bf 2\; 1}})$, respectively. 

If $t \equiv 0 \pmod{4}$, we repeat the $(a)(b)(c)(d)$ pattern $t/4$ times. Otherwise, we repeat the $(a)(b)(c)(d)$ pattern $\lfloor t /4 \rfloor$ times and add $(a)$, $(a)(b)$ or $(a)(b)(c)$ for $t \equiv 1, 2, 3 \pmod{4}$, respectively.

One can verify that the pattern results in a feasible 5-packing coloring.
\end{proof}
\subsubsection{Cycles $C_{4s+3}$ for $s\geq 0$ }
\begin{theorem} \label{thm:pathAlignedCycle}
Let $t\geq 1$ be an integer. Then, 
    $$\chi_p(P_{2t}\Diamond_2 C_3)\leq\left\{
\begin{array}{ll}
3, & t=1,\\
4, & 2 \leq t \leq 3,\\
5, & t\geq 4.
\end{array}\right.
$$
In addition, equality holds for $t\in\{1,2,3\}$. 
\end{theorem}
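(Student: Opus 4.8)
The plan is to follow the same strategy as in the proof of Theorem~\ref{thm:l=2C4s}, exploiting that $P_{2t}\Diamond_2 C_3$ is simply a chain of $t$ triangles. Writing the path as $v_1v_2\cdots v_{2t}$, each pair $v_{2i-1}v_{2i}$ together with one new apex vertex $w_i$ forms the $i$-th copy of $C_3$, and consecutive triangles are joined through the edges $v_{2i}v_{2i+1}$. Thus the internal path vertices $v_2,\dots,v_{2t-1}$ have degree $3$, while all endpoints and all apices have degree $2$. For $t=1$ the graph is exactly $C_3$, so $\chi_p=3$ by Proposition~\ref{prop:cycles}, which disposes of that case together with its equality.

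For the upper bounds I would exhibit explicit colorings in the $\ell=2$ notation, always placing color $2$ on every apex and keeping the larger colors on the path. Let $(a)$, $(b)$, $(c)$ denote the triangle patterns $({\bf 1}\ 2\ {\bf 3})$, $({\bf 1}\ 2\ {\bf 4})$, and $({\bf 1}\ 2\ {\bf 5})$, where the bold entries are the path colors $c(v_{2i-1}),c(v_{2i})$ and the middle entry is the apex color $c(w_i)$. For $t=2$ and $t=3$ I would take $(a)(b)$ and $(a)(b)(a)$; for $t\ge 4$ I would repeat the block $(a)(b)(a)(c)$ exactly $\lfloor t/4\rfloor$ times and then append $(a)$, $(a)(b)$, or $(a)(b)(a)$ according to whether $t\equiv 1,2,3\pmod 4$. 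Verification is then routine: the apices are pairwise at distance at least $3$; color $1$ occupies precisely the odd-indexed path vertices, which are pairwise at distance at least $2$; and along the path the successive occurrences of colors $3$, $4$, $5$ stay at distances $4$, $8$, $8$, meeting the packing requirements $\ge 4$, $\ge 5$, $\ge 6$. This gives the upper bound $4$ for $t\in\{2,3\}$ and $5$ for $t\ge 4$.

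For the matching lower bounds I would argue $t=2$ directly. The graph $P_4\Diamond_2 C_3$ has diameter $3$, so color $3$ can be used at most once (Observation~\ref{obs:diam}). If only three colors were available, then each of the two vertex-disjoint triangles, being a $K_3$, would have to use all of $\{1,2,3\}$ and hence contain a vertex of color $3$; this produces two vertices of color $3$ at distance at most $3<4$, a contradiction, so $\chi_p\ge 4$. For $t=3$ I would appeal to monotonicity of the packing chromatic number under subgraphs: distances never decrease when passing to a subgraph, so a packing coloring of $G$ restricts to one of any subgraph, and since $P_4\Diamond_2 C_3$ sits inside $P_6\Diamond_2 C_3$ we obtain $\chi_p(P_6\Diamond_2 C_3)\ge \chi_p(P_4\Diamond_2 C_3)=4$. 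Combined with the constructions, this establishes equality for $t\in\{1,2,3\}$.

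The part demanding the most care is bookkeeping rather than insight: checking that the periodic coloring for $t\ge 4$ respects every packing constraint across the block boundaries and inside the truncated final block, and in particular that appending a partial block never places two vertices of a common color $\ge 3$ closer than the required distance. The lower bounds, by contrast, are short once the diameter of $P_4\Diamond_2 C_3$ and the subgraph-monotonicity principle are noted.
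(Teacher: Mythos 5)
Your proposal is correct and follows essentially the same route as the paper: the same periodic block construction $(a)(b)(a)(c)$ with highest colors $3,4,5$ (your blocks $({\bf 1}\ 2\ {\bf 3})$ etc.\ are just the paper's $({\bf 3}\ 2\ {\bf 1})$ read in reverse), and the same lower bound for $t=2$ via the observation that diameter $3$ forces color $3$ to be used at most once while each of the two triangles needs all of $\{1,2,3\}$. Your explicit appeal to subgraph monotonicity for $t=3$ is a detail the paper leaves implicit, but it is not a different method.
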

\begin{proof}
The case $t = 1$ is straightforward, as each vertex is assigned a distinct color. For $t = 2$, we obtain two cycles $C_3$, and since no two vertices are at distance at least~$4$, color~$3$ can be used only once. Consequently, a fourth color is required, contradiction.  Now, let $(a)$, $(b)$, and $(c)$ denote the patterns 
$({\bf 3}\;2\;{\bf 1})$, $({\bf 4}\; 2\; {\bf 1})$, $({\bf 5}\;2\;{\bf 1})$, respectively. For $t=2$, we use the pattern $(a) (b)$, while for $t=3$, this pattern can be extended to $(a)(b)(a)$.


\noindent In the case of $t\geq 4$, if $t \equiv 0 \pmod{4}$, we repeat the $(a)(b)(a)(c)$ pattern $t/4$ times. Otherwise, we repeat the $(a)(b)(a)(c)$ pattern $\lfloor t /4 \rfloor$ times and add $(a)$, $(a)(b)$ or $(a)(b)(a)$ for $t \equiv 1, 2, 3 \pmod{4}$, respectively.

One can verify that the pattern results in a feasible 5-packing coloring.
\end{proof}
\begin{observation}
    The pattern $[1\ 2\ 1\ 3]$ for 3-packing coloring of $P_{2t}$ cannot be extended to a 5-packing coloring of $P_{2t} \Diamond_2 C_3$, $t\geq 4$.
\end{observation}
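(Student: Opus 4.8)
The plan is to show that once the spine $P_{2t}$ is labeled by the repeating block $[1\,2\,1\,3]$, the apex vertices of the triangles are so constrained that colors $4$ and $5$ cannot be distributed legally. Write $w_1,\dots,w_t$ for the vertices of the $t$ triangles not lying on the spine, so that $w_i$ is adjacent to both $v_{2i-1}$ and $v_{2i}$. Under the pattern $[1\,2\,1\,3]$ one has $c(v_{2i-1})=1$ for every $i$, while $c(v_{2i})=2$ when $i$ is odd and $c(v_{2i})=3$ when $i$ is even. The whole argument reduces to studying the forced colors of the $w_i$.

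First I would pin down the color of each apex. Since $w_i$ is adjacent to a vertex of color $1$ and to a vertex of color $2$ or $3$, properness already forbids those colors. The key point is that the \emph{remaining} forbidden color (namely $3$ when $i$ is odd, and $2$ when $i$ is even) is also excluded by the packing condition: for $i\ge 2$ the spine vertex $v_{2i-2}$ carries exactly that color and lies at distance $2$ from $w_i$, which is too close both for color $2$ (which needs distance $\ge 3$) and for color $3$ (which needs distance $\ge 4$); for the boundary case $i=1$ one uses instead $v_4$, which has color $3$ and lies at distance $3<4$. Hence every apex $w_i$ must receive color $4$ or $5$.

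Next I would record the mutual distances of the apexes. A shortest $w_i$–$w_j$ path (for $i<j$) runs $w_i\,v_{2i}\,v_{2i+1}\cdots v_{2j-1}\,w_j$, which gives $d(w_i,w_j)=2|i-j|+1$. Consequently two apexes of color $4$ must have indices differing by at least $2$ (from $2|i-j|+1\ge 5$), and two apexes of color $5$ must have indices differing by at least $3$ (from $2|i-j|+1\ge 6$). Thus the set of indices colored $4$ contains no two consecutive integers, while the set of indices colored $5$ is $3$-sparse, and together they must cover all of $\{1,\dots,t\}$.

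The decisive step, and the one I expect to be the main obstacle to phrase cleanly, is a short window argument on three consecutive indices. In any block $\{k,k+1,k+2\}$ at most one index can be colored $5$ (any two of them differ by at most $2<3$), so at least two are colored $4$; but a color-$4$ set has no two consecutive elements, so those two must be $k$ and $k+2$, forcing $k+1$ to be colored $5$. Applying this to the overlapping windows $\{1,2,3\}$ and $\{2,3,4\}$, which both exist precisely because $t\ge 4$, forces index $2$ to be colored $5$ by the first window and colored $4$ by the second, a contradiction. Therefore the $[1\,2\,1\,3]$-coloring of the spine admits no extension to a packing $5$-coloring of $P_{2t}\Diamond_2 C_3$ for $t\ge 4$. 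The only routine verifications are the boundary case $i=1$ in the color-forcing step and the claim that $2|i-j|+1$ is genuinely realized by a shortest path, both of which follow directly from the fixed spine labeling.
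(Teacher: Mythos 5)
Your proof is correct and follows essentially the same route as the paper's argument: force every apex into $\{4,5\}$, translate the packing conditions into index gaps of at least $2$ and $3$ respectively, and derive a contradiction for $t\geq 4$. Your three-index window argument merely makes rigorous the step the paper states informally (that color $4$ can occupy at most every second triangle and color $5$ cannot cover the rest), so no substantive difference in approach.
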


Indeed, if the path is colored with the pattern $[1\ 2\ 1\ 3]$, the remaining uncolored vertex in each cycle must be assigned color~$4$ or~$5$, since colors~$2$ and~$3$ cannot be used, as the distances to the vertices on the path already colored with~$2$ or~$3$ are insufficient. Color~$4$ can be assigned to every second cycle; however, the distance between the uncolored vertices in consecutive second cycles is insufficient for the reuse of color~$5$. Consequently, color~$6$ must be used.


\begin{theorem}
    Let $s\geq 1$ and $t\geq 1$ be integers. Then,
    $$\chi_p(P_{2t}\Diamond_2 C_{4s+3})\leq\left\{
\begin{array}{ll}
4, & 1 \leq t \leq 2,\\
5, & t\geq 3.
\end{array}\right.
$$
In addition, equality holds for $t\in\{1,2\}$.
\end{theorem}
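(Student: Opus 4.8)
The plan is to follow the same template used throughout this section: establish the upper bounds by exhibiting explicit periodic coloring patterns in the bracket-and-arrow notation, and establish the matching lower bounds for small $t$ via the degree-3 vertices on the path together with Proposition~\ref{obs:no1} and Proposition~\ref{prop:cycles}. Since $\ell=2$ here, each cycle $C_{4s+3}$ shares exactly two vertices with the spine $P_{2t}$, and the remaining $4s+1$ vertices of each cycle form an internal path that must be packing-colored consistently with the spine coloring. First I would dispose of $t=1$ immediately, since $P_2 \Diamond_2 C_{4s+3} \simeq C_{4s+3}$, so Proposition~\ref{prop:cycles} gives $\chi_p = 4$ (as $4s+3$ is never a multiple of $4$ and never equals $3$ for $s \ge 1$).

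For the upper bound I would write down a two-block pattern $(a)(b)$ achieving a packing $4$-coloring of $P_4 \Diamond_2 C_{4s+3}$, structured exactly like the $C_{4s+1}$ and $C_{4s+2}$ cases above: each block opens with a bold spine color, fills the interior cycle vertices with a repeating $[1\ 3\ 1\ 2]^*$-type word padded at both ends to absorb the residue of $4s+3$ modulo $4$, and closes with the reversed bold pair $\overleftarrow{\cdots}$ encoding the second spine vertex. Because $4s+3 \equiv 3 \pmod 4$, the interior contributes an odd padding, so I would need to check that the non-periodic boundary words glue correctly to the spine without forcing a fifth color. For $t \ge 3$ I would switch to a second pattern introducing color $5$ on a sparse sublattice of spine vertices, repeating a period-$4$ block $(a)(b)(c)(d)$ of the same flavor as the $C_6$ and $C_{4s+2}$ proofs, with the usual appended initial segments $(a)$, $(a)(b)$, $(a)(b)(c)$ for $t \equiv 1,2,3 \pmod 4$. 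In every case the verification is the routine distance check: any two vertices receiving color $i$ lie at distance $\ge i+1$, which follows by inspecting within-cycle distances and the fixed spacing of the bold colors along the spine.

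For the lower bound I would argue, as in the preceding theorems, that for $t \ge 2$ the spine $P_{2t}$ contains vertices of degree $3$ (each internal spine vertex is incident to its two spine neighbors and to one cycle neighbor), so by Proposition~\ref{obs:no1} one cannot use only three colors if color~$1$ must be avoided on these vertices; combined with $\chi_p(C_{4s+3}) = 4$ and the fact that $C_{4s+3}$ is an induced subgraph, this forces $\chi_p \ge 4$, giving equality at $t=2$. The case $t=1$ is already exact from $C_{4s+3}$. The main obstacle I expect is purely bookkeeping in the upper bound: with $\ell=2$ the two shared vertices of each cycle are adjacent along the spine, so the cycle's interior path has length $\equiv 1 \pmod 4$, and I must verify that the padded word $[1\ 3\ 1\ 2]^*$ can be prefixed and suffixed so that the two endpoints adjacent to the bold spine colors avoid clashes with those spine colors and with each other across the shared edge. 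This gluing at the cycle-to-spine junction—ensuring colors $2$ and $3$ never violate their distance-$3$ and distance-$4$ requirements across consecutive blocks—is exactly the delicate step, and it is why the pattern's boundary words differ from block to block; once the period-$2$ and period-$4$ templates are verified on one representative $s$, the general $s$ follows since the starred repetition only lengthens the well-separated interior.
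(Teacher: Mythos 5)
Your proposal follows essentially the same route as the paper: $t=1$ reduces to $C_{4s+3}$ via Proposition~\ref{prop:cycles}, the lower bound for all $t$ comes from $C_{4s+3}$ being a subgraph with $\chi_p(C_{4s+3})=4$, and the upper bounds come from explicit block patterns with $[1\,3\,1\,2]^*$-type interiors, introducing color $5$ sparsely for $t\geq 3$ (the paper uses a period-two block pair repeated $\lfloor t/2\rfloor$ times rather than your proposed period-four scheme, but that is an inessential difference). The only caveat is that the substance of such a proof lies in actually exhibiting and distance-checking the patterns --- e.g.\ the paper's $({\bf 1}\;2\;4\;[1\;2\;1\;3]^*\;1\;2\;1\;{\bf 3})({\bf 1}\;4\;2\;[1\;3\;1\;2]^*\;1\;3\;1\;{\bf 2})$ for $t=2$ --- which your outline defers but correctly identifies as the delicate gluing step.
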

\begin{proof}
    It is clear that fewer than 4 colors cannot be used. For $t=1$ the graph is isomorphic to $C_{4s+3}$, and by Proposition \ref{prop:cycles} 4 colors are necessary. 
    \\For $t=2$ we color the graph using the pattern:
    $$({\bf 1}\; 2\;4\;[1\;2\;1\;3]^*\;1\;2\;1\;{\bf 3})({\bf 1}\; 4\; 2\;[1\;3\;1\;2]^*\;1\;3\;1\;{\bf 2}).$$
    \\For $t\geq 3$ we apply a slightly different pattern:
    $$({\bf 1}\; 5\;4\;[1\;2\;1\;3]^*\;1\;2\;1\;{\bf 3})({\bf 1}\; 4\; 2\;[1\;3\;1\;2]^*\;1\;3\;1\;{\bf 2}).$$
    \\We repeat this pattern $\lfloor t/2 \rfloor$ times and apply the relevant initial part of this pattern for the remaining uncolored part of $P_{2t}\Diamond_2 C_{4s+3}$, if needed.

    One can easily verify that all the patterns result in feasible 4-packing or 5-packing colorings. 
\end{proof}


\begin{theorem}Let $t\geq 1$ be an integer. Then,
    $$\chi_p(P_{3t}\Diamond_3 C_3)\leq\left\{
\begin{array}{ll}
3, & t =1,\\
4, & 2 \leq t\leq 3,\\
5, & t \geq 4.
\end{array}\right.
$$
In addition, equality holds for $t\in\{1,2,3\}.$ 
\end{theorem}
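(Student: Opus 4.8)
The plan is to reduce the statement entirely to the already-established Theorem~\ref{thm:pathAlignedCycle} concerning $P_{2t}\Diamond_2 C_3$, via a graph isomorphism rather than a fresh coloring construction. The key observation is that the pair $(n,\ell)=(3,3)$ lies within the scope of Proposition~\ref{prop:Cn}: indeed $2\le \ell \le n$ holds with $\ell = n = 3$, and $n-\ell+2 = 2$. Hence Proposition~\ref{prop:Cn} gives, for every $t\ge 2$, the isomorphism $P_{3t}\Diamond_3 C_3 \cong P_{2t}\Diamond_2 C_3$. Since isomorphic graphs share the same packing chromatic number, the whole $t\ge 2$ part of the claim — both the upper bounds ($4$ for $2\le t\le 3$ and $5$ for $t\ge 4$) and the sharpness for $t\in\{2,3\}$ — transfers verbatim from Theorem~\ref{thm:pathAlignedCycle}, so no new pattern or lower-bound argument is required.

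First I would dispose of the base case $t=1$ separately, since Proposition~\ref{prop:Cn} is stated only for $t\ge 2$. When $t=1$, the product $P_3\Diamond_3 C_3$ consists of a single copy of $C_3$ whose embedded $P_3$ coincides with the entire path, so $P_3\Diamond_3 C_3\cong C_3$; by Proposition~\ref{prop:cycles} its packing chromatic number equals $3$, matching both the stated value and the asserted equality at $t=1$. Then, for $t\ge 2$, I would invoke the isomorphism above and read off the bounds and the equality cases $t\in\{2,3\}$ directly from Theorem~\ref{thm:pathAlignedCycle}.

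The only genuinely delicate point — and the step I would check most carefully — is the boundary application of Proposition~\ref{prop:Cn} at $\ell = n = 3$. Here the complementary path is $P'_{n-\ell+2}=P'_2$, i.e.\ the single remaining edge of the triangle joining the two endpoints of the embedded $P_3$; I must confirm that this degenerate-looking decomposition of $C_3$ into $P_3$ and $P'_2$ is legitimate and that re-aligning along the $P'_2$ segments genuinely yields $P_{2t}\Diamond_2 C_3$ rather than a collapsed graph. Once this is verified, the proof is immediate, as the entire combinatorial content has already been carried out for $P_{2t}\Diamond_2 C_3$.
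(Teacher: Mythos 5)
Your proof is correct, but it takes a genuinely different route from the paper. The paper proves this theorem directly: it exhibits the explicit patterns $({\bf 3}\;\overleftarrow{{\bf 1\;2}})({\bf 4}\;\overleftarrow{{\bf 1\;2}})({\bf 3}\;\overleftarrow{{\bf 1\;2}})({\bf 5}\;\overleftarrow{{\bf 1\;2}})$ (repeated and truncated as needed) for the upper bounds, and argues the lower bound for $t\ge 2$ by showing that no $3$-packing coloring of one triangle in $P_6\Diamond_3 C_3$ extends to the whole graph. You instead observe that $(n,\ell)=(3,3)$ falls within the hypotheses of Proposition~\ref{prop:Cn}, so $P_{3t}\Diamond_3 C_3\cong P_{2t}\Diamond_2 C_3$ for $t\ge 2$, and you import both the upper bounds and the sharpness for $t\in\{2,3\}$ from Theorem~\ref{thm:pathAlignedCycle}; the base case $t=1$ is handled as $C_3$ via Proposition~\ref{prop:cycles}. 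The boundary case you flag does check out: with $\ell=n=3$ the complementary path $P'_2$ is simply the third edge of each triangle, and both products are the same chain of $t$ triangles joined consecutively by bridge edges, with one degree-two vertex per triangle off the spine; the number $t$ of copies is preserved, so the case thresholds on $t$ transfer verbatim. Your reduction is shorter and avoids re-deriving a lower bound already established for $P_{2t}\Diamond_2 C_3$ (indeed the paper itself uses this style of reduction elsewhere, e.g.\ in Corollary~\ref{cor:K_3} and in the $s=1$ case of Theorem~\ref{thm:partialconj}); what the paper's direct construction buys is a self-contained, explicit coloring of $P_{3t}\Diamond_3 C_3$ in the $\ell=3$ labelling, which is convenient for the summary tables.
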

\begin{proof}
    Clearly, $P_{3}\Diamond_3 C_3 \simeq C_3$. Thus, for $t=1$, we have $\chi_p(P_{3}\Diamond_3 C_3)=3$. For $t\geq 2$ at least 4 colors are needed. Indeed, if three colors were sufficient, color~$3$ would have to be used twice in a packing coloring of $P_6 \Diamond_3 C_3$. One can observe that any 3-packing coloring of one cycle $C_3$ in $P_6 \Diamond_3 C_3$ cannot be extended to a 3-packing coloring of the entire $P_6 \Diamond_3 C_3$. Therefore, 4 colors must be used. The relevant 4-packingcoloring of $P_{3t}\Diamond_3 C_3$ for $t \in \{2,3\}$ can be obtained using the following coloring pattern:

    $$({\bf 3}\; \overleftarrow{{\bf 1\;2}})({\bf 4}\; \overleftarrow{{\bf 1\;2}})({\bf 3}\; \overleftarrow{{\bf 1\;2}}).$$
\\    For $t\geq 4$, a closely related coloring pattern is applied: 
    $$({\bf 3}\; \overleftarrow{{\bf 1\;2}})({\bf 4}\; \overleftarrow{{\bf 1\;2}})({\bf 3}\; \overleftarrow{{\bf 1\;2}})({\bf 5}\; \overleftarrow{{\bf 1\;2}}).$$
    We repeat this pattern $\lfloor t/4 \rfloor$ times and, if necessary, the corresponding initial segment of the pattern is applied to the remaining uncolored portion of $P_{3t} \Diamond_3 C_3$.

    One can easily verify that all patterns result in feasible 4-packing or 5-packing colorings. 
\end{proof}

\begin{theorem}
    Let $s\geq 1$ and $t \geq 1$ be integers. Then,
 $$\chi_p(P_{3t}\Diamond_3 C_{4s+3})\leq\left\{
\begin{array}{ll}
4, & 1 \leq t \leq 2,\\
5, & t\geq 3.
\end{array}\right.
$$  
In addition, equality holds for $t\in\{1,2\}$. 
\end{theorem}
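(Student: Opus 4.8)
The plan is to follow the template used for the earlier $\ell=3$ cases: prove the two upper bounds by exhibiting explicit periodic colorings, and obtain the matching lower bound for $t\in\{1,2\}$ from a cycle subgraph. First I would dispose of the base case together with the lower bound. For $t=1$ we have $P_{3}\Diamond_3 C_{4s+3}\simeq C_{4s+3}$, and since $4s+3\geq 7$ is neither $3$ nor a multiple of $4$, Proposition~\ref{prop:cycles} gives $\chi_p=4$. For the general lower bound, each of the $t$ copies of $C_{4s+3}$ sits inside the product as a subgraph, and distances in a subgraph are never smaller than in the host graph; hence the restriction of any packing coloring of $P_{3t}\Diamond_3 C_{4s+3}$ to one copy is a packing coloring of $C_{4s+3}$, so $\chi_p(P_{3t}\Diamond_3 C_{4s+3})\geq\chi_p(C_{4s+3})=4$ for every $t\geq1$. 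This yields equality for $t\in\{1,2\}$ once the upper bounds are in place.

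For $t=2$ I would argue $\chi_p\leq4$ constructively. Each cycle consists of its three path vertices $v_{3i-2}v_{3i-1}v_{3i}$ together with a detour of $4s$ cycle-only vertices joining $v_{3i}$ back to $v_{3i-2}$. I would color the backbone $P_6$ by a standard period-four packing pattern on $\{1,2,3\}$ (for instance $1\,2\,1\,3\,1\,2$) and fill each detour with the block $[1\,2\,1\,3]$, rotated so that it meshes with the colors of the two incident path vertices, while inserting a single vertex of color $4$ in each detour to absorb the obstruction that forbids a $\{1,2,3\}$-coloring of $C_{4s+3}$. Placing the two color-$4$ vertices near the far ends of their detours keeps their mutual distance at least $5$; together with routine checks that color $1$ is independent and that colors $2,3$ keep distances $\geq3$ and $\geq4$, this gives a packing $4$-coloring, and with the lower bound, $\chi_p=4$.

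For $t\geq3$ I would give a period-four construction, defining blocks $(a),(b),(c),(d)$ that describe, for four consecutive cycles, the colors of the three path vertices (written with the reversed pair $\overleftarrow{\phantom{xx}}$ as in the notation introduced above) and of the $4s$ detour vertices. The twelve backbone vertices of one period would carry the repeating string $[1\,2\,1\,3]$, keeping colors $1,2,3$ a valid packing on the path, while the single defect vertex of each detour would be colored $4$ or $5$ in the alternating scheme $4,5,4,5$, so that equal heavy colors always lie two cycles apart. One then repeats $(a)(b)(c)(d)$ exactly $\lfloor t/4\rfloor$ times and appends $(a)$, $(a)(b)$, or $(a)(b)(c)$ according to $t\equiv1,2,3\pmod 4$, exactly as in the previous theorems.

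The main obstacle is the distance verification at the seams between consecutive cycles. Because the backbone between cycles $i$ and $i+1$ is only the single edge $v_{3i}v_{3i+1}$, whereas each detour is long ($4s$ vertices), the distances are strongly asymmetric, and the binding constraints are those between near-boundary detour vertices of adjacent or next-adjacent cycles. I would verify that the heavy colors $4$ and $5$, which require separation $\geq5$ and $\geq6$, are never simultaneously placed at boundary-close detour positions within the same residue class of the period. Since every within-cycle and cross-cycle distance grows with $s$, it suffices to confirm the tightest instance $s=1$ (the cycle $C_7$) explicitly; checking that the partial tails $(a)$, $(a)(b)$, $(a)(b)(c)$ and the periodic wrap-around create no new violations is then routine bookkeeping.
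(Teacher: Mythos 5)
Your overall strategy (explicit periodic coloring patterns for the upper bounds, plus the lower bound $\chi_p \ge \chi_p(C_{4s+3}) = 4$ from the cycle subgraph) is the same as the paper's, and the lower-bound argument is correct. However, the construction you propose for $t \ge 3$ has a genuine gap. With the backbone colored by the pure repeating string $[1\ 2\ 1\ 3]$, the path triples of consecutive cycles read $(1,2,1),(3,1,2),(1,3,1),(2,1,3),\dots$, so every second cycle has a triple of the form $(1,x,1)$ flanked on the backbone by a $2$ and a $3$ (or by two $3$'s, or two $2$'s). For such a cycle, \emph{both} detour endpoints (the off-path neighbors of $v_{3i-2}$ and of $v_{3i}$) are adjacent to a vertex colored $1$ and lie at distance $2$ from a vertex colored $2$ and from a vertex colored $3$; hence both are forced to receive a color $\ge 4$. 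These two forced vertices are at distance only $\min(4s-1,\,4)\le 4$ from each other, so they must receive two \emph{distinct} heavy colors, i.e.\ both $4$ and $5$ appear in that single detour. This contradicts your ``single defect vertex of each detour, alternating $4,5,4,5$'' scheme, and it also cascades: the intermediate cycles (with triples $(3,1,2)$ and $(2,1,3)$) still each need a heavy color, and for $s=1$ there is then no admissible position for it. The paper sidesteps exactly this obstruction by putting color $5$ on the backbone itself, using the period-two backbone $2\ 1\ 3\ |\ 1\ 5\ 1$ with a single $4$ placed at a controlled detour endpoint of every cycle.

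Two smaller issues: in your $t=2$ construction the guiding heuristic ``place the two color-$4$ vertices near the far ends of their detours'' is not realizable --- with backbone $1\ 2\ 1\ 3\ 1\ 2$ the defect of the first cycle is \emph{forced} to be the detour vertex adjacent to $v_3$, i.e.\ at the seam (it happens to still be at distance $5$ from the admissible defect positions of the second cycle, so the bound survives, but not for the reason you give). Also, the reduction ``it suffices to check $s=1$'' needs more care than stated: backbone-to-backbone distances do not grow with $s$, and the filler blocks $[1\ 2\ 1\ 3]$ must be shown to mesh with the fixed boundary colors for every $s$, so this is an argument to be made, not an observation.
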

\begin{proof}
    By Proposition \ref{prop:cycles} and since $P_{3}\Diamond_3 C_{4s+3} \simeq C_{4s+3}$, $P_{3}\Diamond_3 C_{4s+3}$ requires 4 colors.
    \\For $t=1$ or $t=2$, we apply the relevant part of the following pattern:

    $$({\bf 2}\; 4\; [1\;2\;1\;3]^*\; 1\; 2\; 1\; \overleftarrow{{\bf 3\; 1}})({\bf 1}\; 4\; [2\;1\;3\;1]^*\;2\;1\;3\; \overleftarrow{{\bf 1\; 2}}).$$
  The resulting packing coloring uses four colors, which is the optimal number.
    \\For $t\geq 3$, we apply the following:
    $$({\bf 2}\; 4\; [1\;2\;1\;3]^*\; 1\; 2\; 1\; \overleftarrow{{\bf 3\; 1}})({\bf 1}\; 4\; [2\;1\;3\;1]^*\;2\;1\;3\; \overleftarrow{{\bf 1\; 5}}).$$
    We repeat this pattern $\lfloor t/2 \rfloor$ times and apply the initial part of the pattern for the remaining uncolored part of $P_{3t}\Diamond_3 C_{4s+3}$, if needed.

 It can be readily verified that the pattern yields a valid packing $5$-coloring. Indeed, the vertices assigned color~$i$ are at pairwise distances of at least~$i+1$, for $1 \leq i \leq 5$. 
 
\end{proof}


\subsubsection{Summarizing remarks}

All results established in Subsection \ref{subsec:cycles} are summarized in Tables \ref{tabela2} and \ref{tabela3}, and can be restated as Corollary \ref{ref:SummaryCorollary}.

\begin{table}[htb]
\begin{center}
\caption{The exact values and bounds for $\chi_p(P_{\ell t} \Diamond_{\ell} G)$, where $G$ is a cycle.}\label{tabela2}

\vspace{3mm}
\setlength{\tabcolsep}{12pt}
 {\tabulinesep=1mm
\begin{tabu}{|c|*{3}{c|}}\hline
\backslashbox[20mm]{$\ell$}{$G$} & $C_{4s}$ & $C_5$ & $C_{4s+1}$ \\ \hline

$2$ & $\left\{
\begin{array}{ll}
= 3, & t=1\\
= 4, & 2 \leq t \leq 3\\
\leq 5, & t\geq 4
\end{array}\right.$ & $\left \{
    \begin{array}{ll}
= 4, & t=1\\
= 5, & 2 \leq t\leq 4\\
\leq 6, & t \geq 5
\end{array}\right.$ & $\left\{
\begin{array}{ll}
= 4, & 1\leq t \leq 3\\
\leq 5, & t \geq 4
\end{array}\right.$ \\ \hline

$3$ & $\left\{
\begin{array}{llll}
= 3, & t=1 & &\\
= 4, & t\geq 2 & &
\end{array}\right.$& 
$\left\{
\begin{array}{llll}
= 4, & t=1 & &\\
= 5, & t \geq 2 &  &
\end{array}\right.$ & $\left\{
\begin{array}{ll}
= 4, & 1 \leq t \leq 2 \\
\leq 5, & t \geq 3 
\end{array}\right.$ \\ \hline

\end{tabu}}
\vspace{3mm}
\end{center}
\end{table}

\begin{table}[htb]
\begin{center}
\caption{The exact values and bounds for $\chi_p(P_{\ell t} \Diamond_{\ell} G)$, where $G$ is a cycle, continued.}\label{tabela3}

\vspace{3mm}
{\tabulinesep=1mm
\begin{tabu}{|c|*{4}{c|}}\hline
\backslashbox[20mm]{$\ell$}{$G$}  & $C_6$ & $C_{4s+2}$ & $C_3$ & $C_{4s+3}$\\ \hline

$2$ & $\left\{
\begin{array}{ll}
 = 4, & 1 \leq t \leq 2\\
\leq 5, & t \geq 3
\end{array}\right.$ & $\left\{
\begin{array}{ll}
 = 4, & 1 \leq t \leq 2\\
\leq 5, & t \geq 3
\end{array}\right.$ & $\left\{
\begin{array}{ll}
= 3, & t=1\\
= 4, & 2 \leq t \leq 3\\
\leq 5, & t\geq 4
\end{array}\right.$ & $\left\{
\begin{array}{ll}
= 4, & 1 \leq t \leq 2\\
\leq 5, & t\geq 3
\end{array}\right.$ \\ \hline

$3$ & $\left\{
\begin{array}{ll}
= 4, & 1 \leq t \leq 2\\
\leq 5, & t \geq 3
\end{array}\right.$ & $\left\{
\begin{array}{ll}
= 4, & 1 \leq t \leq 3\\
\leq 5, & t \geq 4
\end{array}\right.$ & $\left\{
\begin{array}{ll}
= 3, & t =1\\
= 4, & 2 \leq t\leq 3\\
\leq 5, & t \geq 4
\end{array}\right.$ & $\left\{
\begin{array}{ll}
= 4, & 1 \leq t \leq 2\\
\leq 5, & t\geq 3
\end{array}\right.$ \\ \hline

\end{tabu}}
\vspace{3mm}
\end{center}
\end{table}

By Proposition \ref{prop:Cn}, the results for $P_{\ell t} \Diamond_{\ell} C_n$ are equivalent to the results for $P_{(n-\ell+2)t} \Diamond_{n-\ell+2} C_n$. Hence, we have the following result.

\begin{corollary}\label{ref:SummaryCorollary} Let $\ell\in\{2,3, n-1,n\}$,  $t\geq 1$, and $n\geq 3$ be integers. Then,
    $$\chi_p(P_{\ell t} \Diamond_{\ell} C_n) \leq 6.$$
\end{corollary}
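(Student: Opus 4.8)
The plan is to derive the corollary purely as a synthesis of the results already established in this subsection, organized around the two mechanisms that make the bound uniform: an exhaustive case analysis for $\ell \in \{2,3\}$, and the isomorphism of Proposition \ref{prop:Cn}, which folds the cases $\ell \in \{n-1,n\}$ back onto the former two.

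First I would dispose of $\ell \in \{2,3\}$. Every cycle $C_n$ with $n \geq 3$ lies in exactly one residue class modulo $4$, so it is one of $C_{4s}$, $C_{4s+1}$, $C_{4s+2}$, or $C_{4s+3}$, with the small cycles $C_3$, $C_5$, $C_6$ treated by the dedicated theorems. Each such class, for both $\ell=2$ and $\ell=3$, is covered by exactly one theorem of the subsection, and all of these bounds are collected in Tables \ref{tabela2} and \ref{tabela3}. Reading off the tables, every entry is at most $5$ except for the single family $P_{2t}\Diamond_2 C_5$ with $t\geq 5$, whose bound is $6$. Hence $\chi_p(P_{\ell t}\Diamond_\ell C_n)\leq 6$ holds for all $\ell\in\{2,3\}$, $n\geq 3$, and $t\geq 1$, with $6$ being the global maximum of the collected bounds.

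Next I would handle $\ell\in\{n-1,n\}$ by reduction. For $t=1$ the graph is simply $C_n$, and Proposition \ref{prop:cycles} gives $\chi_p(C_n)\leq 4\leq 6$, so assume $t\geq 2$, as required by Proposition \ref{prop:Cn}. Setting $\ell=n$ there yields $n-\ell+2=2$, so $P_{nt}\Diamond_n C_n$ is isomorphic to $P_{2t}\Diamond_2 C_n$; setting $\ell=n-1$ yields $n-\ell+2=3$, so $P_{(n-1)t}\Diamond_{n-1} C_n$ is isomorphic to $P_{3t}\Diamond_3 C_n$. Both isomorphic targets were bounded by $6$ in the previous step, and since the packing chromatic number is an isomorphism invariant, the same bound transfers. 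Combining the two steps establishes $\chi_p(P_{\ell t}\Diamond_\ell C_n)\leq 6$ for all four admissible values of $\ell$.

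There is no deep obstacle here; the argument is essentially bookkeeping. The only two points requiring genuine care are (i) confirming that the case split by $n \bmod 4$, together with the exceptional small cycles $C_3,C_5,C_6$, exhausts every $C_n$ with $n\geq 3$ and matches each to a theorem whose bound appears in the tables, and (ii) correctly identifying $6$ — rather than $5$ — as the maximum, which stems solely from the $\ell=2$, $C_5$ family for $t\geq 5$. I would also verify that the reduction of Proposition \ref{prop:Cn} does not send $\ell=n$ or $\ell=n-1$ into a sub-case with a bound exceeding $6$; since $\ell=n$ always maps to the $\ell=2$ column and $\ell=n-1$ to the $\ell=3$ column, and the maximum over both columns is $6$, the stated constant is exactly the largest of the collected upper bounds and cannot be lowered by this synthesis.
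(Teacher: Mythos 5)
Your proposal is correct and follows essentially the same route as the paper: the bounds for $\ell\in\{2,3\}$ are exactly those collected in Tables \ref{tabela2} and \ref{tabela3} (with maximum $6$ attained only by $P_{2t}\Diamond_2 C_5$), and the cases $\ell\in\{n-1,n\}$ are folded back onto $\ell'=3$ and $\ell'=2$ via the isomorphism of Proposition \ref{prop:Cn}. Your explicit handling of $t=1$ (where Proposition \ref{prop:Cn} is stated only for $t\geq 2$) is a small point of care the paper leaves implicit, but the argument is the same.
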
 

Corollary \ref{ref:SummaryCorollary} shows that such a path-aligned cycle product, like paths, constitutes a graph class whose packing chromatic number remains bounded by a constant, even as the diameter increases. In addition, we may conjecture that, for every value of $\ell$, five colors always suffice for the packing coloring of path-aligned cycles. We state this formally in Conjecture \ref{conj:pathaligned}.

\begin{conj} \label{conj:pathaligned} Let $n\geq 3$, $t\geq 1$, and $4 \leq \ell \leq n$ be integers. Then,
    $$\chi_p(P_{\ell t} \Diamond_{\ell} C_{n}) \leq 5,$$
possibly except for $C_5$.
\end{conj}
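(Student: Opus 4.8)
The plan is to prove the conjecture constructively, exhibiting explicit packing $5$-colorings by periodic patterns, in the spirit of the theorems of Subsection \ref{subsec:cycles}. The first move is a reduction via Proposition \ref{prop:Cn}: since $P_{\ell t}\Diamond_{\ell} C_n \cong P_{(n-\ell+2)t}\Diamond_{n-\ell+2} C_n$, it suffices to treat the range $4 \leq \ell \leq \lfloor (n+2)/2\rfloor$, the larger values of $\ell$ following by this isomorphism, while the extremal values $\ell\in\{2,3,n-1,n\}$ are already covered by Corollary \ref{ref:SummaryCorollary} (and one checks this reduced range is nonempty only for $n\geq 6$, the cases $n\leq 5$ being handled or excluded). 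In this regime the structure is clean: the main path $v_1\dots v_{\ell t}$ is cut into $t$ consecutive blocks of $\ell$ vertices, each block being the shared $P_\ell$ of one cycle $C_n$ closed up by an external \emph{arc} with $n-\ell$ internal vertices, and consecutive cycles are linked only by the bridge edge $v_{i\ell}v_{i\ell+1}$. The only vertices of degree $3$ are the block endpoints $v_{(i-1)\ell+1}, v_{i\ell}$, and since $3<5$, Proposition \ref{obs:no1} never forbids colour $1$ there.

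The second step is a distance analysis that localizes every interaction. I would show that arc vertices belonging to cycles $i$ and $j$ with $|i-j|\geq 2$ lie at distance at least $\ell+3\geq 7$ (the nearest approach routes through two junctions), so no packing constraint for colours $\leq 6$ can be violated between non-consecutive cycles; every genuine conflict is confined to a single pair of consecutive cycles, whose nearest arc vertices sit at distance exactly $3$ through the bridge $v_{i\ell}v_{i\ell+1}$. Consequently colours $1$ and $2$ may be placed freely everywhere, colour $3$ needs controlling only across one junction, and the high colours $4,5$ need only be staggered along the chain of cycles.

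The third step is the core construction. I would colour the \emph{bulk} of every block and every arc with the standard path pattern $1\,2\,1\,3\,1\,2\,1\,3\,\dots$ on colours $\{1,2,3\}$, then install a small repertoire of junction gadgets: short colour strings on the four vertices nearest each bridge, together with a single off-path vertex receiving colour $4$ whenever $n\not\equiv 0 \pmod 4$ forces a fourth colour inside the cycle (by Proposition \ref{prop:cycles}). I would select these gadgets according to the residue $n\bmod 4$ (to absorb the colour that the cycle parity demands) and according to $i\bmod 4$ (to stagger the occurrences of colours $4,5$), running a period-$4$ schedule $(a)(b)(c)(d)$ over consecutive cycles exactly as in the proofs above. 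With $\ell\geq 4$ guaranteeing room between junctions, one checks that any two uses of colour $4$ are then at distance $\geq 5$ and any two uses of colour $5$ at distance $\geq 6$, which together with the localization of the previous step yields a valid packing $5$-coloring. The finite catalogue of gadget interactions could be discharged by hand or, more safely, by a transfer-matrix / finite-state verification over the residues of $n$.

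The hard part will be making one gadget family work uniformly for all $n$ and all $\ell$ in the reduced range, rather than verifying an ad hoc handful of instances. Two pressure points stand out. First, the shortest admissible arcs, when $\ell$ is close to $(n+2)/2$, leave the least slack to separate the forced colour $4$ of one cycle (placed near its arc midpoint, at distance about $n-\ell+3$ from its neighbour's) from that of the adjacent cycle; this bound is tight for small $n$. Second, the residues $n\equiv 1,2\pmod 4$, where $C_n$ is not packing-$3$-colourable, so the obligatory colour $4$ competes with the junction colours $4,5$. I expect the $C_5$ exclusion to be precisely the degenerate endpoint of this analysis: there the arc is too short to separate the forced high colours, which is already visible in the bound $\chi_p(P_{2t}\Diamond_2 C_5)\leq 6$ of the corresponding theorem. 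Establishing the conjecture thus reduces to producing, and checking the pairwise-distance conditions for, this residue-indexed catalogue of junction gadgets.
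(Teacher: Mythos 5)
The statement you are addressing is posed in the paper as an open conjecture, not a theorem: the paper itself proves only the special case $\ell=4$ with cycles $C_{4s+1}$, $s\geq 1$ (Theorem \ref{thm:partialconj}), via an explicit three-pattern schedule $(a)(b)(c)$, and leaves the general case unresolved. Your submission is likewise not a proof but a programme. The reduction via Proposition \ref{prop:Cn} to $4\leq\ell\leq\lfloor(n+2)/2\rfloor$ is sound, and your localization of interactions (non-consecutive cycles at distance $\geq\ell+3\geq 7$, consecutive arcs meeting at distance $3$ across the bridge) is correct and genuinely useful. But the entire content of the conjecture lives in the step you explicitly defer: ``producing, and checking the pairwise-distance conditions for, this residue-indexed catalogue of junction gadgets.'' No gadget is ever written down, no pattern is exhibited, and no distance condition is verified, so nothing is proved.

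Moreover, the deferred step is not routine bookkeeping. For $n\not\equiv 0\pmod 4$ every copy of $C_n$ must contain a vertex coloured at least $4$ (Proposition \ref{prop:cycles}), so colour $4$ is forced in \emph{every} cycle; a period-$4$ stagger of colour $4$ over the cycles, as in your $(a)(b)(c)(d)$ schedule, is therefore not available, and one must instead place the forced $4$'s so that consecutive occurrences are at distance $\geq 5$ even though the nearest arc vertices of consecutive cycles sit at distance $3$. When $\ell$ is close to $(n+2)/2$ the arcs are short and every arc vertex is within distance $2$ of a junction, so this placement problem is tight, not slack --- this is exactly the regime your own ``pressure points'' paragraph flags without resolving. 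Your closing claim that the $C_5$ exclusion ``is precisely the degenerate endpoint of this analysis'' is a plausible heuristic but is likewise unargued. As it stands, the proposal restates the difficulty of the conjecture in more structured language; to turn it into a proof you would need to supply the uniform family of patterns (or the finite-state verification you allude to) and carry out the distance checks, which neither you nor the paper has done.
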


In Theorem \ref{thm:partialconj}, we prove that Conjecture \ref{conj:pathaligned} holds for $\ell = 4$ and cycles $C_{4s+1}$ for $s\geq 1$.

\begin{theorem} \label{thm:partialconj}
    Let $s \geq 1$ and $t \geq 1$ be integers. Then,
    $$\chi_p(P_{4t} \Diamond_4 C_{4s+1})\leq 5.$$
    \label{thm:l4}
\end{theorem}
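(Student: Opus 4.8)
The plan is to write down an explicit, periodic packing $5$-coloring and then check the distance inequalities color class by color class. First I would set up notation for $P_{4t}\Diamond_4 C_{4s+1}$: for each $i\in\{1,\dots,t\}$ the $i$-th copy of the cycle is obtained from the shared $P_4$ on $v_{4i-3},v_{4i-2},v_{4i-1},v_{4i}$ by attaching a private path of $4s-3$ extra vertices between its endpoints $v_{4i-3}$ and $v_{4i}$, and consecutive copies are joined only through the path edge $v_{4i}v_{4i+1}$. I will call $v_{4i-3}$ and $v_{4i}$ the left and right junctions of copy $i$. Every vertex has degree at most $3<5$, so Proposition~\ref{obs:no1} places no restriction on the use of color $1$, leaving me free to put color $1$ wherever it is convenient.

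The coloring I would propose is the following. Color the spine $P_{4t}$ with the $4$-periodic pattern $1\,3\,1\,2$, that is $c(v_{4i-3})=1$, $c(v_{4i-2})=3$, $c(v_{4i-1})=1$, $c(v_{4i})=2$. Continue the same $1\,3\,1\,2$ pattern along each private path, and give the one extra vertex adjacent to the left junction $v_{4i-3}$ a high color, alternating this high color between $4$ on odd copies and $5$ on even copies. In this way the restriction to each copy is a valid packing coloring of $C_{4s+1}$ (colors $1,2,3$ forming the pattern $1\,3\,1\,2$ on all but one vertex, with the remaining vertex carrying the high color), and two vertices that share a high color always lie two copies apart.

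I would then verify the packing condition one color at a time. Colors $1,2,3$ are short range and different copies only interact across the junction edge $v_{4i}v_{4i+1}$; the two inequalities that bind are $d(v_{4i-2},v_{4i+2})=4\ge4$ for consecutive color-$3$ inner vertices and $d(v_{4i},v_{4i+4})\ge3$ for consecutive color-$2$ junctions, together with the in-copy distances, all of which hold. Colors $4$ and $5$ are long range: the nearest pair of copies sharing a high color is two apart, and I would show their high vertices are at distance at least $6$, which is enough for color $4$ (separation $5$) and color $5$ (separation $6$). The case $t=1$ is immediate, since then the graph is just $C_{4s+1}$ and $\chi_p(C_{4s+1})=4$.

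The hard part, and the reason the coloring is arranged exactly this way, is the case $s=1$, i.e.\ $C_5$. Because $C_5$ has diameter $2$, its single extra vertex is adjacent to both junctions and acts as a shortcut making the two junctions of a copy distance $2$ apart; this compresses the whole chain, so that consecutive right junctions fall to distance exactly $3$ and the high vertices of copies two apart fall to distance exactly $6$. This is precisely why the demanding color $3$ (separation $4$) must be placed on the inner vertices $v_{4i-2}$, where the distance remains $4$, while the cheaper color $2$ (separation $3$) is what goes on the junctions; the naive assignment that puts color $3$ on the junctions fails because $3<4$. The same shortcut is what forces two distinct high colors: a single high color repeated in neighboring copies would be at distance $3$, violating its required separation, so alternating $4$ and $5$ is necessary. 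Once these tight equalities $3,4,6$ are confirmed for $s=1$, the case $s\ge2$ only makes every distance larger, since then each private path has length $4s-2\ge6>3$ and creates no shortcuts, so the identical coloring remains valid a fortiori.
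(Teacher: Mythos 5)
Your construction is correct, and it follows the same overall strategy as the paper --- exhibit an explicit periodic coloring and verify the distance constraints class by class --- but with a genuinely different pattern. The paper splits the proof: for $s=1$ it invokes the isomorphism of Proposition~\ref{prop:Cn} to reduce $P_{4t}\Diamond_4 C_5$ to $P_{3t}\Diamond_3 C_5$ and quotes Theorem~\ref{thm:l3C5}, while for $s\ge 2$ it uses a period-three block pattern $(a)(b)(c)$ whose spine reads $1\ 5\ 1\ 3\ 1\ 2\ 1\ 5\ 1\ 2\ 1\ 3$, placing color $5$ on the spine and color $4$ on the private paths. Your coloring instead keeps the spine at the bare $3$-coloring $[1\ 3\ 1\ 2]^{*}$, wraps the same pattern around each cycle, and isolates the single ``defect'' vertex of each $C_{4s+1}$ to carry the high color, alternating $4$ and $5$ with period two in the copies. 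This buys uniformity (one pattern for all $s\ge 1$, no case split and no appeal to the earlier theorems) and a cleaner verification, since the only nontrivial checks are the tight equalities you identify for $s=1$ ($d(v_{4i},v_{4i+4})=3$, $d(v_{4i-2},v_{4i+2})=4$, and distance $6$ between high vertices two copies apart), all of which I confirm; for $s\ge 2$ the additional color-$2$ and color-$3$ occurrences on the private paths sit at distances $3$ and $4$ respectively from their nearest same-colored spine vertices, so the bounds still hold, as your ``in-copy distances'' remark implicitly covers. Your explanation of \emph{why} color $3$ must sit on the inner spine vertices and color $2$ on the junctions (because of the $C_5$ shortcut) is a useful piece of insight absent from the paper's proof. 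One small caution: the phrase ``continue the pattern along each private path'' is only correct if the continuation proceeds around the cycle from the right junction $v_{4i}$ toward $u_1$; continuing from the left end instead would place two color-$2$ vertices adjacent for $s=2$. Your parenthetical description of the restriction to each copy as $[1\ 3\ 1\ 2]^{s}$ followed by one high vertex pins down the intended (correct) reading, but it would be worth stating the indexing explicitly.
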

\begin{proof}
The case of $s=1$ is a direct consequence of Proposition \ref{prop:Cn} and Theorem \ref{thm:l3C5}. Now, let $s\geq 2$.

    Let $(a), (b),$ and $(c)$ denote the patterns $({\bf 1}\ 4\ 3\ 1\ 2 \ 1\ [3\ 1\ 2 \ 1]^*\ \overleftarrow{{\bf 3\ 1\ 5}})$, $({\bf 1}\ 4\ 1\ 2 \ 3 \ 1 \ [1\ 2 \ 3\ 1]^*\ \overleftarrow{{\bf 5\ 1\ 2}})$, and $({\bf 1}\ 4\ 3\ 1\ 2 \ 1\ [3\ 1\ 2 \ 1]^*\ \overleftarrow{{\bf 3\ 1\ 2}})$, respectively. For $t=1$, use the pattern $(c)$. For $t=2$ and $t=3$, use the patterns $(a)(b)$ and $(a)(b)(c)$, respectively. For $t \geq 4$, we repeat the $(a)(b)(c)$ pattern $\lfloor t/4 \rfloor$ times and add $(a)$ or $(a)(b)$ for $i=1,2$, respectively, for $t=3k+i$ for some integer $k$. In all cases, one can verify that the patterns ensure feasible 4-or 5- packing colorings.
\end{proof}

Furthermore, observe that the pattern for the packing $5$-coloring of the path $P_{3t}$ in $P_{3t} \Diamond_3 C_{4s+1}$, where $t \geq 3$ and $s \geq 2$, as presented in the proof of Theorem~\ref{thm:l3C4s+1}, coincides with the pattern for the packing $5$-coloring of the path $P_{4t}$ in $P_{4t} \Diamond_4 C_{4s+1}$, given in the proof of Theorem~\ref{thm:l4}. Indeed, in both cases, we have the pattern $1\ 5\ 1\ 3 \ 1 \ 2 \ 1 \ 5 \ 1 \ 2 \ 1 \ 3 \ldots$. This implies that, in both settings, the packing coloring of the path can be extended to $P_{\ell t} \Diamond_{\ell} C_{4s+1}$, for $t \geq 3$, $s \geq 2$, and $\ell \in \{3,4\}$, without introducing any additional colors.

This naturally raises the question of whether such a coloring of the path can be extended to the cycles $C_{4s+1}$ so as to yield a feasible packing coloring also for other values of~$\ell$ and for cycles of different lengths.

\begin{question}
Is there a coloring pattern for $P_{\ell t}$ in $P_{\ell t} \Diamond_{\ell} C_n$, for $n \geq 3$, that can be extended to a packing coloring of $P_{\ell t} \Diamond_{\ell'} C_n$ for every $n \geq \ell' \geq \ell$, without introducing any additional colors?
\end{question}

\subsection{Path-aligned complete products}
\label{subsec:complete}
For path-aligned complete graphs, we will define a coloring pattern for the path, and, at the same time, specify the pattern for the complete graphs. For example, the coloring of $P_6 \Diamond_2 K_4$ depicted in Figure \ref{fig:P6_K4} will be denoted as $({\bf 3}\ 2\ 4\ {\bf 1})({\bf 5}\ 2\ 6\ {\bf 1})({\bf 3}\ 2\ 4\ {\bf 1})$. Analogously to the case of packing coloring patterns for path-aligned cycles, the numbers in bold indicate the colors assigned to the vertices on the path.


\begin{prop}\label{prop:Kn}

    Let $t \ge 2$ and $n \ge 3$ be integers. Then, $P_{\ell t}\Diamond_{\ell} K_n$ is isomorphic to $P_{\ell^{'}t}\Diamond_{\ell^{'}} K_n$ for any integers $2 \leq \ell, \ell^{'} \le n$.
\end{prop}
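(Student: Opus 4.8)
The plan is to show that, no matter which admissible value of $\ell$ is chosen, the graph $P_{\ell t} \Diamond_{\ell} K_n$ has one and the same underlying structure: a chain of $t$ pairwise vertex-disjoint copies of $K_n$ in which consecutive copies are joined by exactly one bridge edge. I would then exploit the fact that $K_n$ has the full symmetric group $S_n$ as its automorphism group, so that any ordered pair of distinct vertices can be mapped to any other; this makes the precise attachment points of the bridges irrelevant up to isomorphism. Concretely, I would introduce a canonical graph $H(n,t)$ consisting of cliques $Q_1,\dots,Q_t$ on disjoint vertex sets, together with a single bridge edge joining a fixed vertex of $Q_i$ to a fixed vertex of $Q_{i+1}$ for $1\le i\le t-1$, and prove that $P_{\ell t}\Diamond_{\ell} K_n \cong H(n,t)$ for every $\ell$. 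The proposition then follows at once by transitivity of the isomorphism relation.

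First I would verify the structural claims, in parallel with the argument of Proposition \ref{prop:Cn}. Following Definition \ref{aligned_cycles}, the $i$-th copy of $K_n$ contains the path vertices $v_{(i-1)\ell+1},\dots,v_{i\ell}$ together with $n-\ell$ private vertices, and distinct copies are vertex-disjoint. Because the $\ell$ path vertices within a single copy already lie in a clique, the only edges joining different copies are the path edges $v_{i\ell}v_{i\ell+1}$ for $1\le i\le t-1$; these are exactly the $t-1$ bridges, and no other inter-copy edges can appear. For an interior copy $i$ the incoming bridge attaches at $v_{(i-1)\ell+1}$ and the outgoing bridge at $v_{i\ell}$, while each of the two end copies carries a single bridge vertex.

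The key step is to observe that the two attachment vertices of an interior copy are distinct: the equality $(i-1)\ell+1=i\ell$ would force $\ell=1$, which is excluded since $\ell\ge 2$. I would then build the isomorphism copy by copy: for each $i$, choose any bijection from the vertex set of the $i$-th $K_n$ onto that of $Q_i$ sending $v_{(i-1)\ell+1}$ and $v_{i\ell}$ to the two designated bridge vertices of $Q_i$ (and matching the single bridge vertex for the two end copies). Such a bijection exists precisely because $K_n$ is complete, so any assignment of distinct vertices to distinct roles is automatically edge-preserving. Taking the union of these bijections sends every intra-copy edge to an intra-copy edge and every bridge edge to the corresponding bridge of $H(n,t)$, and is therefore a graph isomorphism.

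I expect the main obstacle to be conceptual rather than computational: making precise that the particular pair of attachment vertices in each interior clique may be chosen freely. This is exactly where the hypothesis $\ell\ge 2$ enters, guaranteeing that the two attachment vertices are distinct, and where the vertex-transitivity of $K_n$ — in fact its $2$-transitivity — becomes essential. This is also the feature that distinguishes the complete-graph case from the cycle case of Proposition \ref{prop:Cn}: the limited dihedral symmetry of $C_n$ forces the specific pairing $\ell\leftrightarrow n-\ell+2$, whereas the full symmetry of $K_n$ collapses all admissible values of $\ell$ into a single isomorphism type.
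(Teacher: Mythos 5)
Your proposal is correct and takes essentially the same approach as the paper: both arguments rest on the observation that $P_{\ell t}\Diamond_{\ell}K_n$ is just a chain of $t$ disjoint copies of $K_n$ joined by $t-1$ bridge edges, and that the completeness of $K_n$ makes the choice of the (distinct, since $\ell\ge 2$) attachment vertices irrelevant up to isomorphism. The only cosmetic difference is that you normalize every $\ell$ to a canonical chain-of-cliques graph $H(n,t)$, whereas the paper fixes $\ell=2$ as the reference representative and re-reads its path as an $\ell$-vertex-per-clique path; both then conclude by transitivity of isomorphism.
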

\begin{proof}

 The argument follows the analogous line of reasoning as in the proof of Proposition~\ref{prop:Cn}. Now, we start from $P_{2t}\Diamond_{2} K_n$ and to show isomorphism to $P_{\ell t}\Diamond_\ell K_n$, i.e. the case where we have $\ell$ common vertices with the main path, we proceed as follows. For each copy of $K_n$ in $P_{2t}\Diamond_{2} K_n$, choose any $\ell -2$ vertices not lying on the path $P_{2t}$ and take them together with the two vertices already on the path. These $\ell$ vertices then serve as the common vertices with the path; repeating this choice for every copy and relabeling along the path yields $P_{\ell t}\Diamond_\ell K_n$, as desired. Since the graph isomorphism relation is transitive, it follows that there exists an isomorphism between $P_{\ell t}\Diamond_{\ell} K_n$ and $P_{\ell ' t}\Diamond_{\ell'} K_n$ for any pair of integers $\ell, \ell '$ satisfying $2 \leq \ell, \ell^{'} \le n$.
\end{proof}

Notice that in path-aligned complete product graphs, the diameter of the graph depends only on the path length; that is, $diam(P_n\Diamond_{\ell}K_t)=diam(P_n) = n - 1$ for all $\ell, t, n$. Furthermore, since $C_3$ is isomorphic to $K_3$, we have the same result as in Theorem \ref{thm:pathAlignedCycle}.

\begin{corollary}\label{cor:K_3}
Let $t\geq 1$ and $\ell \in \{2,3\}$ be integers. Then,     $$\chi_p(P_{2t}\Diamond_{\ell} K_3)\leq\left\{
\begin{array}{ll}
3, & t=1,\\
4, & 2 \leq t \leq 3,\\
5, & t\geq 4.
\end{array}\right.
$$
In addition, the equality holds for $t\in\{1,2,3\}$.     
\end{corollary}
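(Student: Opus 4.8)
The plan is to derive this result directly from the earlier theorem rather than construct new colorings from scratch. The key observation is that $C_3 \simeq K_3$, so the graph $P_{2t}\Diamond_2 K_3$ is literally the same graph as $P_{2t}\Diamond_2 C_3$, whose packing chromatic number was established in Theorem~\ref{thm:pathAlignedCycle}. This immediately handles the case $\ell = 2$ with all the claimed values and tightness. The remaining task is to reduce the case $\ell = 3$ to the case $\ell = 2$.

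For the reduction, I would invoke Proposition~\ref{prop:Kn}, which asserts that $P_{\ell t}\Diamond_{\ell} K_n \simeq P_{\ell' t}\Diamond_{\ell'} K_n$ for any $2 \le \ell, \ell' \le n$. Applying this with $n = 3$, $\ell = 3$, and $\ell' = 2$ shows that $P_{3t}\Diamond_3 K_3$ is isomorphic to $P_{2t}\Diamond_2 K_3$. Since isomorphic graphs have the same packing chromatic number, the bounds and equalities for $\ell = 3$ follow from those for $\ell = 2$. Note the statement pairs $\ell = 3$ with the path length $P_{2t}$, consistent with the fact that $\Diamond_3$ already requires three vertices per copy to lie on the path and $K_3$ has exactly three vertices; under the isomorphism the underlying graph is unchanged, so the stated bound in terms of $P_{2t}$ is the correct object to compare.

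Concretely, the proof would read: by $C_3 \simeq K_3$ we have $P_{2t}\Diamond_2 K_3 \simeq P_{2t}\Diamond_2 C_3$, so Theorem~\ref{thm:pathAlignedCycle} gives the displayed bounds and the equalities for $t \in \{1,2,3\}$ in the case $\ell = 2$. For $\ell = 3$, Proposition~\ref{prop:Kn} yields $P_{3t}\Diamond_3 K_3 \simeq P_{2t}\Diamond_2 K_3$, and since the packing chromatic number is a graph isomorphism invariant, the same values hold.

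I do not anticipate a genuine obstacle here, since everything is a consequence of results already proved; the only point requiring mild care is bookkeeping the indices correctly when applying Proposition~\ref{prop:Kn}, namely checking that the hypothesis $2 \le \ell, \ell' \le n = 3$ is satisfied for both $\ell = 2$ and $\ell = 3$, and confirming that the path-length parameter transforms as stated so that the isomorphism class matches the object named in the corollary. This index-matching is the single step where an error could creep in, but it is routine verification rather than a substantive difficulty.
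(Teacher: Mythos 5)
Your proposal is correct and follows essentially the same route as the paper: the paper justifies this corollary with the single observation that $C_3 \simeq K_3$, so the bounds and tightness transfer directly from Theorem~\ref{thm:pathAlignedCycle}, with Proposition~\ref{prop:Kn} (implicitly) covering $\ell=3$ exactly as you do. The only point worth noting is that Proposition~\ref{prop:Kn} is stated for $t\geq 2$, but the $t=1$ case is trivial since both graphs are then $K_3$ itself.
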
 

We now extend the result presented in Corollary \ref{cor:K_3}, first to $K_4$ in Theorem \ref{thm_K4}, and subsequently to $K_5$ in Theorem \ref{thm_K5}.

\begin{theorem} \label{thm_K4}
Let $t\geq 1$ and $2 \leq \ell \leq 4$ be integers. Then, 
    $$\chi_p(P_{2t}\Diamond_\ell K_4)\leq\left\{
\begin{array}{ll}
4, & t=1,\\
6, & t=2, 3,\\
8, & t\geq 4.
\end{array}\right.
$$
In addition, the equality holds for $t\in\{1,2,3\}$.
\end{theorem}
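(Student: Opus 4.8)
The plan is to collapse the three cases of $\ell$ to a single graph, give one periodic coloring that yields all three upper bounds simultaneously, and then prove the matching lower bounds for $t\le 3$ by a capacity-counting argument. For $t\ge 2$, Proposition~\ref{prop:Kn} says $P_{2t}\Diamond_\ell K_4$ is the same graph (up to isomorphism, with $t$ copies of $K_4$) for every $\ell\in\{2,3,4\}$, so I fix $\ell=2$ and work with $P_{2t}\Diamond_2 K_4$; the case $t=1$ is simply $K_4$, whose vertices are pairwise adjacent, so every color class is a singleton and $\chi_p(K_4)=4$. Writing block $i$ as the $K_4$ on the path vertices $v_{2i-1},v_{2i}$ together with two off-path vertices $a_i,b_i$, the only distances I need are the linear ones $d(v_{2i-1},v_{2j-1})=d(v_{2i},v_{2j})=2|i-j|$ and $d(a_i,a_j)=d(b_i,b_j)=2|i-j|+1$, together with the fact that consecutive blocks meet only along the edge $v_{2i}v_{2i+1}$.

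For the upper bound I would use the single period-$4$ pattern
$$({\bf 3}\ 2\ 4\ {\bf 1})({\bf 5}\ 2\ 6\ {\bf 1})({\bf 3}\ 2\ 4\ {\bf 1})({\bf 7}\ 2\ 8\ {\bf 1}),$$
truncated to the first $t$ blocks, where the bold entries color $v_{2i-1},v_{2i}$ and the others color $a_i,b_i$. The device is to reserve color $1$ for every even path vertex and color $2$ for every $a_i$ (valid since $d(v_{2i},v_{2j})\ge2$ and $d(a_i,a_j)\ge3$), and to color the two remaining vertices of each block from the two \emph{disjoint} palettes $\{3,5,7\}$ on the odd path vertex and $\{4,6,8\}$ on $b_i$. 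Disjointness means every monochromatic pair lies within one role, so it suffices to check the tightest cases: color $3$ at path-distance $4$, color $7$ at path-distance $8$, color $4$ at distance $5$, color $8$ at distance $9$—each holding with equality. Because an initial segment of blocks is an isometric subgraph, truncation preserves validity; colors $7,8$ first appear in block $4$, so the truncation uses $4$, $6$, $6$, and $8$ colors for $t=1$, $t=2$, $t=3$, and $t\ge4$ respectively, delivering all the stated upper bounds at once.

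For the lower bounds I would bound, for each color, how many vertices it can occupy and compare the total to $4t$. Since each $K_4$ needs four distinct colors, every color is used at most once per block. For $t=2$ the diameter is $3$, so by Observation~\ref{obs:diam} every color $\ge3$ is used at most once and colors $1,2$ at most twice; hence $k$ colors cover at most $2+2+(k-2)=k+2$ vertices, forcing $k\ge6$. For $t=3$ the diameter is $5$, so colors $\ge5$ are used at most once, colors $1,2$ at most three times, and colors $3,4$ at most twice: any two adjacent blocks have all their vertices at mutual distance at most $3<4$, so a color requiring distance $\ge4$ can occupy only the two (non-adjacent) end blocks. This gives capacity $3+3+2+2+(k-4)=k+6$, forcing $k\ge6$. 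Together with the constructions, these establish equality for $t\in\{1,2,3\}$.

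The main obstacle is the lower bound for $t=3$: the counting is tight only if the per-color capacities are exactly right, so I must verify carefully that colors $3$ and $4$ genuinely cannot occupy all three blocks, which rests on computing that the maximum distance between vertices of adjacent blocks is exactly $3$ while that between the two end blocks is exactly $5$. The upper bound, by contrast, is routine once the disjoint-palette idea is in hand; its only delicacy is confirming that the four extremal distance constraints are met with equality rather than failing by one.
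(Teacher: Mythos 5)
Your proposal is correct and follows essentially the same route as the paper: reduction to $\ell=2$ via Proposition~\ref{prop:Kn}, the identical period-four pattern $({\bf 3}\ 2\ 4\ {\bf 1})({\bf 5}\ 2\ 6\ {\bf 1})({\bf 3}\ 2\ 4\ {\bf 1})({\bf 7}\ 2\ 8\ {\bf 1})$ for the upper bound, and the diameter-plus-clique counting argument for the lower bound at $t=2$. The only divergence is at $t=3$, where the paper infers the bound from the containment of $P_4\Diamond_2 K_4$ in $P_6\Diamond_2 K_4$ while you redo a direct per-color capacity count; your version is slightly more self-contained (it avoids relying on the fact that the smaller graph sits isometrically inside the larger one, which is what actually justifies the paper's monotonicity step), but both are valid.
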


\begin{proof}
   By Proposition \ref{prop:Kn}, it is sufficient to prove the result for $\ell=2$. Let $(a)$, $(b)$, and $(c)$ denote the patterns $({\bf 3}\ 2\ 4\ \bf 1)$, $({\bf 5}\ 2\ 6\ \bf 1)$, and $({\bf 7}\ 2\ 8\ \bf 1)$, respectively. For $t=1,2,3,4$, the pattern is $(a)$, $(a)(b)$, $(a)(b)(a)$, and $(a)(b)(a)(c)$, respectively. For $t \ge 5$, if $t \equiv 0 \pmod{4}$, we repeat the $(a)(b)(a)(c)$ pattern $t/4$ times. Otherwise, we repeat the $(a)(b)(a)(c)$ pattern $\lfloor t/4 \rfloor$ times and add $(a)$, $(a)(b)$ or $(a)(b)(a)$ for $t \equiv 1,2,3 \ (\bmod 4)$, respectively. One can verify that these patterns constitute feasible packing colorings. Besides, it is easy to verify that $\chi_p(P_{2t}\Diamond_2 K_4) = 4$ for $t=1$. Let us now consider $k=2$. As we have two copies of $K_4$, any color can be used at most twice. In addition, by Observation \ref{obs:diam}, any color greater than 2 can be used at most once since $diam(P_4 \Diamond_2 K_4)=3$. Hence, we have that $\chi_p(P_{4}\Diamond_2 K_4) \ge 6$. Since $P_4 \Diamond_2 K_4$ is a subgraph of $P_6 \Diamond_2 K_4$, $\chi_p(P_{6}\Diamond_2 K_4) \ge 6$. 
    Hence, $\chi_p(P_{2t}\Diamond_2 K_4) = 6$ for $t=2,3$ (see Figure \ref{fig:P6_K4}).
\end{proof}

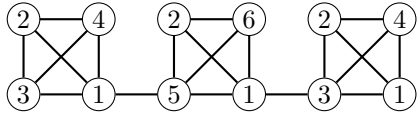
\begin{figure}[H]
\centering
\begin{tikzpicture}[scale=1.0, every node/.style={circle, draw=black, fill=white, inner sep=1.5pt}]

\foreach \i/\x/\col in {
  0/0/3, 1/1/1, 2/2/5, 3/3/1, 4/4/3,
  5/5/1
} {
  \node (v\i) at (\x,0) {\col};
}

\foreach \i in {0,...,4} {
  \pgfmathtruncatemacro{\j}{\i + 1}
  \draw[thick] (v\i) -- (v\j);
}

\foreach \i/\x/\labB in {
  0/0/4,
  2/2/6,
  4/4/4}{

  \pgfmathtruncatemacro{\j}{\i + 1}
  \coordinate (a\i) at (\x,1);             
  \coordinate (b\i) at ({\x + 1},1);       
  
  \draw[thick] (v\i) -- (a\i) -- (b\i) -- (v\j);
  \draw[thick] (v\i) -- (b\i);
  \draw[thick] (a\i)  -- (v\j);
  \node at (a\i) {2};
  \node at (b\i) {\labB};
}

\end{tikzpicture}
\caption{Packing 6-coloring of $P_{6} \mathbin{\Diamond_2} K_4$.}\label{fig:P6_K4}
\end{figure}

To obtain a feasible packing coloring pattern for $P_{2t} \Diamond_2 K_5$, we employed a computer-assisted approach based on integer linear programming (ILP). Specifically, we formulated the problem using the ILP framework proposed by Shao et al. \cite{shao2015ilp}, who introduced several alternative formulations for the packing coloring problem. Among these, we adopted the ILP-1 formulation and implemented it using the CPLEX optimization software. The formulation is described below.

 We consider a given graph $G=(V, E)$. For each vertex $v \in V(G)$ and each integer $i \in \{1, 2, \ldots, k\}$, let $x_{v,i}$ be a binary decision variable that equals $1$ if $v$ is labeled with $i$, and $0$ otherwise. Let $z$ be a continuous decision variable. The distance between any two vertices $u$ and $v$, defined as the number of edges on the shortest path between them, is denoted by $d(u,v)$. The problem of finding the packing chromatic number of a graph $G$ can be formulated as follows:

\begin{align}
\text{minimize } \quad & z \label{eq:obj} \\
\text{subject to:} \quad & \nonumber \\
 \sum_{i=1}^{k} x_{v,i} &= 1 && \forall v \in V(G) \label{eq:assign} \\
 x_{u,i} + x_{v,i} &\leq 1 && \forall u,v \in V(G),\ 1 \leq i \leq k,\ d(u,v) \leq i \label{eq:indep} \\
 i x_{v,i} &\leq z && \forall v \in V(G),\ 1 \leq i \leq k \label{eq:bound} \\
 x_{v,i} &\in \{0, 1\} && \forall v \in V(G),\ 1 \leq i \leq k \label{eq:binary}
\end{align}

The variable $z$ represents the highest color index employed in a feasible packing $k$-coloring. Accordingly, the objective function in \eqref{eq:obj} seeks to minimize $z$. Constraint \eqref{eq:assign} ensures that each vertex gets exactly one color, whereas Constraint \eqref{eq:indep} enforces that any two vertices within distance at most $i$ cannot be assigned the same color. Constraint \eqref{eq:bound} ensures that the variable $z$ refers to the index of the highest index color used. Finally, Constraint \eqref{eq:binary} ensures that $x_{v,i}$ are binary decision variables. 

The source code of the implementation together with some results can be found in \cite{melih2025ilp}. This implementation enabled us to obtain a feasible coloring pattern for $P_{2t} \Diamond_2 K_5$, which we use in Theorem \ref{thm_K5}.

From now on, $(x) \cdots (y)$ will denote the coloring pattern from $(x)$ to $(y)$ in alphabetical order in the Latin alphabet. For example, $(a)\cdots (d)$ will denote the pattern $(a)(b)(c)(d)$.

\begin{theorem}\label{thm_K5}
    Let $t \geq 1$ and $2 \leq \ell \leq 5$ be integers. Then,
    $$\chi_p(P_{\ell t}\Diamond_{\ell} K_5)\leq 14.$$
\end{theorem}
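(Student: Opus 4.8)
The plan is to reduce to a single value of $\ell$ and then exhibit and verify an explicit periodic $14$-coloring. By Proposition~\ref{prop:Kn}, the graphs $P_{\ell t}\Diamond_{\ell} K_5$ are pairwise isomorphic for all $2\le\ell\le 5$, so it suffices to bound $\chi_p(P_{2t}\Diamond_2 K_5)$. I label the $i$-th copy of $K_5$ by its two path vertices $v_{2i-1},v_{2i}$ together with three private vertices $a_i,b_i,c_i$ not lying on the path. Since each copy is a clique, the five vertices of every copy must receive five pairwise distinct colors; and because $\mathrm{diam}(P_{2t}\Diamond_2 K_5)=2t-1$ grows with $t$ while each clique still demands five distinct colors, colors cannot merely be used once (for instance color $5$, needing separation $6$, cannot appear in two consecutive cliques), so a genuinely periodic reuse scheme is unavoidable.

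First I would record the inter-copy distances, which govern every reuse constraint. For copies $i<j$ with $m=j-i$, the only pair at distance $2m-1$ is $(v_{2i},v_{2j-1})$, while every other cross-copy pair is at distance $2m$ or $2m+1$; in particular any two private vertices of copies $i$ and $j$ are at distance exactly $2m+1$, and within a copy every pair is at distance $1$. Using this profile I would assign the two cheapest colors structurally: color $1$ to each even path vertex $v_{2i}$ (consecutive occurrences are at distance $2$) and color $2$ to one private vertex $a_i$ of each copy (consecutive occurrences are at distance $3$). This fixes two of the five colors in every copy and leaves the three ``hard'' slots $v_{2i-1},b_i,c_i$ to be colored with larger colors in a periodic fashion.

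The remaining and main task is to color these three hard slots per copy with colors from $\{3,\dots,14\}$ so that, once a block is tiled, every color $i$ induces an $i$-packing. The obstacle is that the separation $i+1$ required for color $i$ couples the odd-path track and the two private tracks asymmetrically: a color placed on $v_{2i-1}$ and on a private vertex of a \emph{later} copy sit at distance $2m+1$, whereas a private vertex of an \emph{earlier} copy and a later $v_{2j-1}$ sit at distance $2m$. Reconciling these against the hard demand of five distinct colors in every clique makes a hand-built pattern easy to get wrong and hard to optimize, which is precisely why I would (as the authors do) solve the ILP-1 formulation~\eqref{eq:obj}--\eqref{eq:binary} on a single period to obtain a valid repeating block using at most $14$ colors, present it as named patterns $(a),(b),\dots$ in the notation fixed above, tile it, and truncate to the appropriate initial segment for small $t$.

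Finally I would verify the coloring directly against the distance profile: colors $1$ and $2$ are immediate from the construction, and for each $i\in\{3,\dots,14\}$ one checks that same-colored vertices lie at least $i+1$ apart, both within one period and across the periodic boundary (the wrap-around being the only place where the tiling could fail). The hardest part is exactly producing the period---balancing five distinct colors per clique against the steeply increasing separation demands of the higher colors---which is why the bound is established computationally rather than through a closed-form pattern.
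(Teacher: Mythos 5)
Your overall strategy is the same as the paper's: reduce to $\ell=2$ via Proposition~\ref{prop:Kn}, then tile a periodic block found by the ILP formulation \eqref{eq:obj}--\eqref{eq:binary} and check each color class against the inter-copy distance profile. Your distance computations are correct (private--private pairs of copies $i$ and $j$ at distance $2m+1$, the pair $(v_{2i},v_{2j-1})$ at $2m-1$, everything else at $2m$ or $2m+1$), and this is exactly the data one needs for the verification step.

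However, there is a genuine gap: you never exhibit the coloring. An upper bound $\chi_p\leq 14$ is proved only by producing a concrete periodic block and verifying it; saying that you \emph{would} obtain one from the ILP solver is a description of a method, not a proof. The paper's proof consists precisely of the twelve explicit five-tuples $(a),\dots,(l)$, the tiling rule for each residue of $t$ modulo $12$, and the color-by-color distance check --- none of which appears in your write-up. Moreover, the structural skeleton you commit to in advance (color $1$ on every even path vertex $v_{2i}$, color $2$ on a private vertex of every copy) is \emph{not} satisfied by the paper's actual pattern: in block $(d)=({\bf 7}\ 1\ 2\ 4\ {\bf 5})$ color $1$ sits on a private vertex, and in $(e)=({\bf 1}\ 2\ 12\ 13\ {\bf 3})$ it sits on the first path vertex. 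So you would additionally need to show that a $14$-color period exists \emph{subject to} your rigid pre-assignment, which is an unverified (and possibly false) strengthening; a rough density count over the colors $3,\dots,14$ leaves little slack, so this cannot be waved away. To close the gap you must either produce and verify an explicit period respecting your skeleton, or drop the skeleton and present the pattern the solver actually returns, as the paper does.
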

\begin{proof}
    By Proposition \ref{prop:Kn}, it is sufficient to prove the result for $\ell=2$. Let $(a), (b), (c), (d), (e), (f), (g), (h), (i), (j), (k)$, and $(l)$ denote the patterns $({\bf 5}\ 2\ 3 \ 10\ {\bf 1})$, $({\bf 9}\ 2\ 4 \ 11\ {\bf 1})$, $({\bf 3}\ 2\ 6 \ 8\ {\bf 1})$, $({\bf 7}\ 1\ 2 \ 4\ {\bf 5})$, 
$({\bf 1}\ 2\ 12 \ 13\ {\bf 3})$, $({\bf 1}\ 2\ 4 \ 6\ {\bf 10})$, $({\bf 1}\ 2\ 5 \ 8\ {\bf 3})$, $({\bf 11}\ 2\ 4 \ 7\ {\bf 1})$, $({\bf 9}\ 2\ 3 \ 6\  {\bf 1})$, $({\bf 5}\ 2\ 4 \ 14\ {\bf 1})$, $({\bf 3}\ 2\ 8 \ 12\ {\bf 1})$, and $({\bf 7}\ 2\ 4 \ 6\ {\bf 1})$, respectively.
\\ For $t \equiv 0 \pmod{12}$ we repeat the pattern 
$(a)\cdots (l)$. Otherwise, we repeat the 
$(a)\cdots (l)$ 
pattern $\lfloor t/6 \rfloor$ times, then add $(a)$, $(a)(b)$, $(a)(b)(c)$, $(a)\cdots (d)$, $(a)\cdots (e)$,  $(a)\cdots(f)$,  $(a)\cdots (g)$, $(a)\cdots (h)$,  
$(a)\cdots (j)$, 
$(a)\cdots (k)$,
and $(a)\cdots(l)$, for $t \equiv 1, 2, 3, 4, 5, 6, 7, 8, 9, 10, 11 \pmod{12}$, respectively.



Now, we show that the given coloring is a feasible packing coloring as follows. First, note that the distance between vertices with color 1 is at least two. Next, observe that color 2 is assigned to vertices in two consecutive complete graphs that are not on the path, ensuring that the distance between any two such vertices is at least 3. Similarly, colors 3 and 4 are used in every second complete graph. The distance between any two vertices colored with 3 is at least 4, while the distance between any two vertices colored with 4 is at least 5.
For $i \in \{5,6\}$, if color $i$ appears in the $j^\text{th}$ complete graph along the path, its next occurrence is on the $(j+3)^\text{rd}$ complete graph. For $i \in \{7,8\}$, if color $i$ appears in the $j^\text{th}$ complete graph along the path, its next occurrence is on the $(j+4)^\text{th}$ complete graph. Consequently, for colors $5,6,7,8$ the distance between any two vertices assigned color $i$ is at least $i+1$.
Color 9 appears only on the vertices on the path $P_{2t}$, where the distance between any two such vertices is at least 10. It is easy to verify that any two vertices colored with color $i$, for $i\in\{10,11,12\}$, are at a distance of at least $i+1$. Furthermore, colors 13 and 14 appears in every $12^\text{th}$ complete graph. Therefore, the conditions for a valid packing coloring are satisfied in all cases.

In summary, for every color $i$, where $1 \leq i \leq 14$, the distance between any two vertices colored with $i$ is at least $i+1$. Hence, the coloring obtained according to the given pattern constitutes a proper packing 14-coloring.
\end{proof}

\subsubsection{Summarizing remarks}

In this subsection, we have analyzed the path-aligned products of complete graphs. First, we have given a general transformation result for $P_{\ell t}\Diamond_{\ell} K_n$ for any $n\geq 3$ in Proposition \ref{prop:Kn}. Then, we have provided upper bounds for $\chi_p(P_{\ell t}\Diamond_{\ell} K_n)$, for $n \in \{3, 4, 5\}$ and any feasible $\ell \geq 2$. For $\ell =1$, our computational experiments show that the number of colors for packing coloring of such graphs increases rapidly. We pose the following as an open question.
\begin{question}
    Is $\chi_p(P_{\ell t}\Diamond_{\ell} K_n)$ bounded by a constant for $\ell=1$?
\end{question}


Besides, our computer experiments show that $\chi_p(P_{\ell t}\Diamond_{\ell} K_6) \le 22$ for $t \le 48$ and $2  \le \ell \le 6$. Then we pose the following conjecture:

\begin{conj}
    $\chi_p(P_{\ell t}\Diamond_{\ell} K_6) \le 22$ for $2  \le \ell \le 6$.
\end{conj}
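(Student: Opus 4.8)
The plan is to follow the strategy used in the proof of Theorem~\ref{thm_K5}. First, by Proposition~\ref{prop:Kn} the graph $P_{\ell t}\Diamond_{\ell}K_6$ is isomorphic to $P_{2t}\Diamond_2 K_6$ for every $2\le\ell\le 6$, so it suffices to treat the case $\ell=2$. Here the graph consists of $t$ copies of $K_6$ strung along the path $P_{2t}$, where the $j$-th copy uses the two path vertices $v_{2j-1},v_{2j}$ together with four complete-only vertices. Since $K_6$ has diameter $1$, every two vertices inside a single copy are adjacent, so each copy must receive six pairwise distinct colors; this ``six distinct colors per copy'' requirement is the structural constraint that drives the entire construction.

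Next I would record the inter-copy distances, which govern how often each color may be reused. Two complete-only vertices lying in copies $j$ and $j+k$ are at distance $2k+1$, while the path vertices of these two copies lie at distances $2k-1$, $2k$, or $2k+1$ depending on which endpoints are chosen. Consequently a color $i$ placed on complete-only vertices may reappear only after at least $\lceil i/2\rceil$ copies, with slightly stricter thresholds for the path placements. These thresholds fix the minimal admissible repetition period of each individual color, and hence the coarsest common period $p$ of the whole pattern (for $K_5$ this was $p=12$).

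The core of the argument is then to exhibit an explicit periodic block pattern of period $p$, where each block encodes the six colors of one copy of $K_6$ (two bold path colors and four complete-only colors) using only the colors $1,\dots,22$. As in Theorem~\ref{thm_K5}, the low colors $1$ and $2$ would be reused in (almost) every block, the colors $3,4$ every few blocks, and the largest colors only once per period. I would then verify feasibility color by color: for each $i\in\{1,\dots,22\}$, using the distance formulas above, one checks that any two consecutive occurrences of $i$---including the pair that straddles the boundary between two repetitions of the period---are at distance at least $i+1$. Because the distance between occurrences is monotone in their copy-separation, occurrences that are farther apart automatically satisfy the bound, so it suffices to verify the constraints within a single period together with its wrap-around, which is a finite check. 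Finally, the cases $t\not\equiv 0\pmod p$ are handled exactly as before, by appending the appropriate initial segment $(a)\cdots$ of the pattern.

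The main obstacle is the design of the period-$p$ pattern itself: one must simultaneously honour the ``six distinct colors per copy'' constraint in every block and the color-dependent spacing constraints across blocks, all while keeping the largest color at $22$. This is exactly the combinatorial bottleneck that the authors resolved computationally for $K_5$ and verified only up to $t=48$ for $K_6$. Turning that computational evidence into a closed-form periodic certificate---and proving that the wrap-around of the certificate respects every distance bound, thereby covering all $t$ at once rather than a finite range---is the crux that separates the conjecture from a theorem.
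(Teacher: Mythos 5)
This statement is posed in the paper as an open conjecture, not a theorem: the authors offer no proof, only the computational evidence that $\chi_p(P_{\ell t}\Diamond_{\ell} K_6)\le 22$ for $t\le 48$ and $2\le\ell\le 6$. So there is no proof in the paper to compare yours against, and your proposal does not close the gap either. What you have written is a correct and sensible \emph{plan} — the reduction to $\ell=2$ via Proposition~\ref{prop:Kn} is valid, your inter-copy distance formulas ($2k+1$ between complete-only vertices of copies $j$ and $j+k$, and $2k-1$, $2k$, or $2k+1$ between path vertices) are right, and the ``six pairwise distinct colors per copy'' observation correctly identifies the binding constraint. But the entire content of such a proof is the explicit periodic block pattern itself, and you never exhibit one. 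Without a concrete assignment of colors $1,\dots,22$ to the six vertices of each copy over one full period, together with the finite wrap-around check, nothing has been proved; the distance bookkeeping you set up is only the \emph{verification} half of the argument, and it has nothing to verify. You acknowledge this yourself in your final paragraph, which is intellectually honest but confirms that the proposal is a roadmap rather than a proof.

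A secondary point worth flagging: your claim that a color $i$ on complete-only vertices ``may reappear only after at least $\lceil i/2\rceil$ copies'' needs care. The condition is $2k+1\ge i+1$, i.e.\ $k\ge i/2$, so $k\ge\lceil i/2\rceil$ only when $i$ is odd; for even $i$ the threshold is $k\ge i/2$ exactly. More importantly, determining the correct period $p$ is not a routine computation from these thresholds — for $K_5$ the authors needed an ILP solver to find a working period-$12$ pattern, and the apparent difficulty of doing the same by hand for $K_6$ (where each block needs six distinct colors and the palette reaches $22$) is precisely why the statement remains a conjecture. If you want to turn this into a theorem, the deliverable is the explicit periodic certificate, which your proposal defers rather than supplies.
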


Recall that the packing coloring problem is NP-complete even for trees \cite{fiala}, and consequently also for block graphs. Note that path-aligned complete product graphs constitute a subclass of block graphs. Therefore, the results presented in Section \ref{subsec:complete} identify an infinite family of graphs within the class of block graphs, where the packing coloring problem is generally NP-complete, for which the problem can be solved in polynomial time.

Furthermore, in the survey \cite{survey}, we have:

\begin{theorem}[Thm. 2.10]
If $G$ is a graph with $\chi_p(G)=\chi(G)$, then $\omega(G)=\chi(G)$.\label{thm:eq_chi}
\end{theorem}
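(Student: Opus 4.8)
Since any packing coloring is in particular proper we have $\chi_p(G)\ge\chi(G)$, and $\omega(G)\le\chi(G)$ holds for every graph; hence it suffices to exhibit a clique of size $k:=\chi_p(G)=\chi(G)$. The plan is to fix an optimal packing coloring $c:V(G)\to[k]$ with classes $V_1,\dots,V_k$ and exploit that it is \emph{simultaneously} an optimal proper $k$-coloring. In particular each class is nonempty, and by the standard merging argument, between every pair of classes $V_i,V_j$ there is at least one edge (otherwise recoloring all of $V_j$ with color $i$ would produce a proper $(k-1)$-coloring, contradicting $\chi(G)=k$). I will also use two consequences of the packing constraint: for $i\ge 2$ every class $V_i$ is a $2$-packing, so its vertices are pairwise at distance $\ge 3$ with pairwise disjoint closed neighborhoods, and consequently two neighbors of a common vertex can share a color only if that color is $1$.

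I would then induct on $k$. For $k=1$ the graph is edgeless and $\omega(G)=1$; for $k=2$ any edge between $V_1$ and $V_2$ is already a $K_2$. For the inductive step, delete the color-$1$ class and pass to $G-V_1$. Restricting $c$ and relabelling the colors $2,\dots,k$ as $1,\dots,k-1$ yields a packing $(k-1)$-coloring, since distances never decrease in a subgraph and a set with pairwise distance $\ge i+1$ is, after the downward shift, an $(i-1)$-packing. Moreover deleting one class of an optimal coloring lowers the chromatic number by exactly one (it is at most $k-1$ because the remaining classes color $G-V_1$, and at least $\chi(G)-1$ because re-adding the independent set $V_1$ raises $\chi$ by at most one), so $\chi_p(G-V_1)=\chi(G-V_1)=k-1$. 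By the induction hypothesis $G-V_1$ contains a clique $Q$ of size $k-1$; as $Q$ avoids color $1$ and its vertices receive pairwise distinct colors, $Q$ uses precisely the colors $2,3,\dots,k$, one vertex each.

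It remains to promote $Q$ to a $K_k$, that is, to find a vertex adjacent to all of $Q$; such a vertex is necessarily colored $1$, since colors $2,\dots,k$ already occur on $Q$. Here I would first normalize $c$ so that $V_1$ is dominating: as long as some $u\notin V_1$ has no neighbor in $V_1$, recolor $u$ with color $1$; this remains a valid packing coloring, and it cannot empty the class of $u$, for that would give a packing $(k-1)$-coloring and contradict $\chi_p(G)=k$, so the process terminates with $V_1$ a maximal independent, hence dominating, set. This guarantees that every $q_i\in Q$ has a color-$1$ neighbor, and the \emph{main obstacle} — the single place where the packing hypothesis must be used in full force — is to force these neighbors to coincide in one common neighbor of all of $Q$. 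I expect to close this by a local recoloring argument: were there no common color-$1$ neighbor of $Q$, one could recolor a vertex of $Q$ together with part of $V_1$ so as either to save a color (contradicting $\chi(G)=k$) or to violate an $i$-packing condition that the optimal coloring must satisfy. Should the extension prove stubborn, the alternative route is to pass first to a $k$-vertex-critical subgraph $H\subseteq G$, which inherits $\chi_p(H)=\chi(H)=k$ because $c|_H$ stays a packing coloring under the growth of distances, and then to show that a $k$-critical graph admitting a packing $k$-coloring must be $K_k$, invoking the Gallai–Brooks structure theory of critical graphs to eliminate the non-complete cases.
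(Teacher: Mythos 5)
This statement is Theorem~2.10 quoted from the survey \cite{survey}; the paper gives no proof of it, so there is nothing in-paper to compare your argument against, and I can only assess the proposal on its own terms. Your reduction is sound as far as it goes: $\chi_p(G)\ge\chi(G)\ge\omega(G)$, the downward colour shift correctly gives $\chi_p(G-V_1)=\chi(G-V_1)=k-1$, and induction hands you a clique $Q$ on $k-1$ vertices carrying the colours $2,\dots,k$. But the proof is not complete: the whole difficulty of the theorem is concentrated in the step you explicitly leave open, namely producing a single colour-$1$ vertex adjacent to all of $Q$. Making $V_1$ dominating only gives each $q_j$ \emph{some} colour-$1$ neighbour, and neither exit strategy is carried out --- ``recolour so as to save a colour or violate a packing condition'' is a hope, not an argument, and Gallai--Brooks structure theory of critical graphs does not by itself say that a $k$-critical graph admitting a packing $k$-colouring is complete (already for $k=3$ that claim is exactly the fact that no odd cycle of length at least $5$ has packing chromatic number $3$, which comes from the cycle computation, not from critical-graph theory). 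Since the inductive step is broken, the argument proves nothing beyond $k=2$.

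The gap is closable, and the missing ingredient is a distance computation you never make. Each $V_i$ with $i\ge2$ is in particular a $2$-packing, so every vertex has at most one neighbour of each colour $i\ge2$; hence $\Delta(G-V_1)\le k-2$ and your $(k-1)$-clique $Q$ is automatically an entire component of $G-V_1$. Pass first to a $k$-critical subgraph $H$ (the restriction of $c$ remains a packing colouring because distances only grow, so $\chi_p(H)=\chi(H)=k$); then $\delta(H)\ge k-1$ forces every $w\in V_1$ to have \emph{exactly} one neighbour of each colour $2,\dots,k$, and forces every vertex of $Q$ to have a neighbour in $V_1$. Now if $w\in V_1$ is adjacent to any vertex of $Q$, then $d(w,q_j)\le2$ for all $j$, so for $j\ge3$ the colour-$j$ neighbour $u_j$ of $w$ satisfies $d(u_j,q_j)\le3<j+1$, and the $j$-packing condition forces $u_j=q_j$; choosing $w$ to be a $V_1$-neighbour of the colour-$2$ vertex $q_2$ then makes $w$ adjacent to all of $Q$, and $\{w\}\cup Q$ is the desired $K_k$. (One must also justify the existence of $Q$ itself, e.g.\ via Brooks' theorem applied to $H-V_1$, where $\chi=k-1=\Delta+1$, and for $k=4$ rule out the odd-cycle components Brooks permits by observing they would inherit a packing $3$-colouring, contradicting $\chi_p(C_{2m+1})=4$ for $m\ge2$.) Note that it is the $j$-packing conditions for $j\ge3$ --- not just properness plus the $2$-packing property --- that make the common neighbour exist; this is precisely the ``full force of the packing hypothesis'' you anticipated needing but did not supply.
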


Here we show that the converse is not true; counterexamples are path-aligned even cycles and path-aligned complete graphs. In particular, $\chi(P_{4}\Diamond_2 K_4) = 4 = \omega(P_{4}\Diamond_2 K_4)$ but $\chi_p(P_{4}\Diamond_2 K_4) =6 \ne 4$ by Theorem \ref{thm_K4}. It would be interesting to explore the graphs where the Theorem \ref{thm:eq_chi} is true.

\begin{question}
    What are the graphs $G$ such that if $\omega(G)=\chi(G)$, then $\chi_p(G)=\chi(G)$?
\end{question}

\section{Packing Coloring of Caterpillars} \label{sec:corona}

In this section, we study the packing chromatic number of the caterpillars, which can be seen as the generalized $p$-corona of paths. The first study addressing the packing coloring of corona graphs was conducted by Laïche et al. \cite{laiche2017packing}.
\begin{definition}
\emph{For a given integer $p \geq 1$, the}
 $p$-corona of a graph $G$ \emph{is the graph obtained from $G$ by adding $p$ degree-one
 neighbors to every vertex of $G$.}
\end{definition}  
The authors of \cite{laiche2017packing} determined the packing
 chromatic number of $p$-coronae of paths and cycles for every $p \geq 1$. In this section, we extend the results obtained for $p$-coronae of paths to caterpillars, which is a more general class of graphs since any number of $K_1$'s are attached to each vertex in a caterpillar.

\begin{prop}[\cite{laiche2017packing}]
The packing chromatic number of the corona  graph $P_n \circ K_1$ is given
 by
$$
\chi_p(P_n \circ K_1)=
\left\{
\begin{array}{ll}
2 & \text{ if } n=1,\\
3 & \text{ if } n\in\{2,3\},\\
4 & \text{ if } 4 \leq n \leq 9,\\
5 & \text{ if } n \geq 10.
\end{array}\right.
$$
\end{prop}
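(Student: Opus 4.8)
The plan is to establish the four regimes by matching explicit colorings (upper bounds) against forcing arguments (lower bounds), exploiting the rigid local structure of the comb $P_n\circ K_1$: write $v_1,\dots,v_n$ for the spine and $u_i$ for the leaf attached to $v_i$, so that each internal spine vertex has degree $3$, each leaf has degree $1$, and $\operatorname{diam}(P_n\circ K_1)=n+1$. Two small identifications dispose of the base cases: $P_1\circ K_1\cong K_2$ gives $\chi_p=2$, while $P_2\circ K_1\cong P_4$ gives $\chi_p=3$ by the path result of \cite{goddard2008broadcast} (the lower bound $\chi_p\ge 3$ for $n=3$ being immediate, since the spine cannot be packing-$2$-colored). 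For all remaining lengths I would handle the upper and lower bounds separately.

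For the upper bounds I would exhibit explicit periodic coloring patterns, described column by column as an assignment of a pair $(c(v_i),c(u_i))$ to each index $i$. For $4\le n\le 9$ a finite family of $4$-colorings (one pattern, truncated to the appropriate length) suffices; for $n\ge 10$ I would give a periodic pattern in the colors $\{1,2,3,4,5\}$ and verify it one period at a time. The verification is routine distance checking, using that in the comb $d(v_i,v_j)=|i-j|$, $d(u_i,u_j)=|i-j|+2$, and $d(v_i,u_j)=|i-j|+1$ for $i\ne j$; one only has to confirm that each class $V_c$ attains the required minimum pairwise distance $c+1$ within a period and across each period boundary.

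The lower bounds carry the real content. For $4\le n\le 9$ I would prove $\chi_p\ge 4$ from Proposition \ref{obs:no1}: in any $3$-coloring no degree-$3$ spine vertex may receive color $1$, so the internal spine lies in $\{2,3\}$. For $n\ge 5$ this is already fatal, since three consecutive internal vertices must alternate between $2$ and $3$, placing two equal colors at distance $2$ and violating the distance requirement of whichever color repeats; the boundary case $n=4$ needs one extra step, propagating the constraints from $v_2,v_3$ to the adjacent leaves and spine endpoints until some leaf is left with no admissible color. For $n\ge 10$ I would prove $\chi_p\ge 5$: assuming a $4$-coloring, analyze a window of consecutive columns and bound, column by column, how the classes $V_1,V_2,V_3,V_4$ can be distributed, showing that a block of ten columns cannot be covered.

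The main obstacle is precisely this last bound. A naive density count does not close: the maximum attainable densities of a $1$-, $2$-, $3$-, and $4$-packing in the comb are about $\tfrac12,\tfrac12,\tfrac14,\tfrac16$ (color $1$ reaching $\alpha=n$ out of $2n$ vertices, color $2$ at most one vertex per column, etc.), whose sum exceeds $1$, so no class can be ruled out in isolation. One must instead track the interaction between classes — placing color $2$ on a leaf forbids it on the two neighbouring leaves and on the nearby spine vertices, and similarly for the higher colors — which I would control either by a discharging/weighting scheme over consecutive columns or by a finite case analysis on a minimal obstruction spanning roughly ten columns. Pinning the threshold exactly at $n=10$ (so that $n=9$ still admits a $4$-coloring while $n=10$ does not) is the delicate quantitative point, and it is exactly what fixes the window length needed in the argument.
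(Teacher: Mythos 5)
First, a point of reference: the paper does not prove this proposition at all --- it is imported verbatim from La\"iche, Bouchemakh and Sopena \cite{laiche2017packing}, so there is no in-paper argument to measure yours against; any proof has to stand on its own. Judged that way, the parts of your proposal that are actually carried out are correct. The identifications $P_1\circ K_1\cong K_2$ and $P_2\circ K_1\cong P_4$, the $n=3$ case, and the distance formulas for the comb are all right. Your lower bound for $4\le n\le 9$ is sound and complete in outline: Proposition~\ref{obs:no1} with $x=3$ forbids color $1$ on the degree-$3$ internal spine vertices; for $n\ge 5$ three consecutive such vertices cannot be colored from $\{2,3\}$ without placing two equal colors at distance $2$; and for $n=4$ the forced assignment $v_1=v_4=1$, $\{v_2,v_3\}=\{2,3\}$ leaves the leaf $u_1$ with no admissible color. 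That matches the standard argument.

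The genuine gap is the one you flag yourself: the inequality $\chi_p(P_n\circ K_1)\ge 5$ for $n\ge 10$ is the substantive content of the proposition in its main regime, and you do not prove it --- you correctly observe that the naive density count $\tfrac12+\tfrac12+\tfrac14+\tfrac16>1$ cannot rule out a packing $4$-coloring, and then offer two candidate strategies (a discharging scheme, or a finite case analysis on a window of about ten columns) without executing either. Naming the obstacle is not the same as overcoming it; in particular nothing in the proposal pins the threshold at exactly $n=10$ rather than $9$ or $11$, which is precisely the delicate point. Until one of those strategies is actually carried out (the argument in \cite{laiche2017packing} is a concrete case analysis of where color $1$ can sit on the spine and how colors $2,3,4$ can then be distributed over a bounded window), the proposition is not established. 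A secondary, repairable omission is that none of the explicit colorings certifying the upper bounds is written down: the $4$-coloring for $4\le n\le 9$ and the periodic $5$-coloring for $n\ge 10$ are asserted to exist but never exhibited, so even the easy halves of the last two cases are not yet verifiable.
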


 
\begin{prop}[\cite{laiche2017packing}]
The packing chromatic number of the corona  graph $P_n \circ 2K_1$ is given
 by
$$
\chi_p(P_n \circ 2K_1)=
\left\{
\begin{array}{ll}
2 & \text{ if } n=1,\\
3 & \text{ if } n=2,\\
4 & \text{ if } n\in \{3,4\},\\
5 & \text{ if } 5 \leq n \leq 11,\\
6 & \text{ otherwise.}
\end{array}\right.
$$
\end{prop}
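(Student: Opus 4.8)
The plan is to fix notation, reduce to the spine, and then treat the upper and lower bounds separately. Write the spine of $P_n\circ 2K_1$ as $v_1\cdots v_n$, attach to each $v_i$ two leaves, and call $\{v_i,\text{its two leaves}\}$ the $i$-th \emph{block}. The only distances ever needed are $d(v_i,v_j)=|i-j|$, the distance $|i-j|+1$ between a spine vertex and a leaf of another block, the distance $|i-j|+2$ between two leaves of distinct blocks, and the distance $2$ between the two leaves of one block. Two consequences guide everything: a leaf may always take colour $1$, and the two leaves of one block (distance $2$) may simultaneously take colour $1$; conversely, for any $i\ge 2$ no two of the three vertices in a block can share colour $i$ (they are pairwise at distance $\le 2<i+1$), so each colour $\ge 2$ occupies at most one vertex per block. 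I would also record the elementary fact that the packing chromatic number is subgraph-monotone, since deleting edges or vertices only increases distances; restricting to the subgraph $P_n\circ K_1$ obtained by removing one leaf per block gives $\chi_p(P_n\circ 2K_1)\ge \chi_p(P_n\circ K_1)$, which by the preceding proposition already yields $\ge 4$ for $4\le n\le 9$ and $\ge 5$ for $n\ge 10$.

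For the upper bounds I would exhibit explicit, eventually periodic colourings and verify the distance condition colour by colour. The naive idea of colouring every leaf with $1$ and packing the spine with colours $\{2,3,\dots\}$ works only for small $n$, because a path coloured solely with colours $\ge 2$ (minimum gaps $3,4,5,\dots$) has covering density $\tfrac13+\tfrac14+\tfrac15+\cdots<1$ and is coverable by $\{2,3,4,5\}$ only up to a bounded length. The genuinely useful device, which I expect the construction to exploit, is to place colour $1$ on selected \emph{spine} vertices and then let the freed leaves of those blocks absorb the large colours (a leaf carrying colour $5$ is automatically far from almost everything). I would build one periodic block for each target value---$4$ for $n\le 4$, $5$ for $5\le n\le 11$, and $6$ for $n\ge 12$---and dispose of the residue classes of $n$ by truncating the period or prepending a fixed prefix, exactly in the style of the cycle constructions earlier in the paper.

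The lower bounds split into an easy part and a hard part. For $n=1$ the graph is the star $K_{1,2}$, giving $2$. For $2\le n\le 4$, and for the value $5$ when $n\ge 5$, the engine is Proposition~\ref{obs:no1}: with at most $x$ colours no vertex of degree $\ge x$ carries colour $1$. With $\le 3$ colours this forbids colour $1$ on every spine vertex (degree $\ge 3$), and with $\le 4$ colours it forbids colour $1$ on every interior spine vertex (degree $4$). This forces a long stretch of the spine to be coloured using colours $\ge 2$ only; combined with the observation that a path longer than a small explicit bound cannot be covered by $\{2,\dots,c\}$, and with Observation~\ref{obs:diam} limiting how often the largest colours recur, it rules out $3$ colours for $n\ge 2$, $4$ colours for $n\ge 3$, and (for $n\ge 7$ directly, with a short finite check at $n=5,6$) $4$ colours for all $n\ge 5$, i.e. $\chi_p\ge 5$ there.

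The hard part, and the main obstacle, is the exact threshold $\chi_p\ge 6$ for $n\ge 12$, equivalently that $5$ colours fail precisely from $n=12$ on. A crude per-colour count over a window of $w$ consecutive blocks does not suffice: since a colour $\ge 2$ may be placed on a leaf at essentially every block, the per-colour capacities sum to roughly $4w>3w$, so no contradiction follows from densities alone. I would instead argue locally, classifying each block by whether both its leaves receive colour $1$---which forces its spine vertex into $\{2,3,4,5\}$---or not, and track how a run of blocks of the first type behaves like a path that must be covered by $\{2,3,4,5\}$, while blocks of the second type leak only a controlled amount of slack. Propagating this bookkeeping across a window long enough to contain the unavoidable obstruction should yield the contradiction for $n\ge 12$; near the boundary I would confirm the finitely many tight cases ($n=11$ achievable with $5$ colours, $n=12$ not) by a direct check and then extend monotonically. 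I expect this window/case analysis to be the most delicate and technical step of the whole argument.
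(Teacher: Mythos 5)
First, a point of reference: the paper does not prove this proposition at all --- it is imported verbatim from La\"iche, Bouchemakh and Sopena \cite{laiche2017packing} --- so there is no in-paper argument to compare yours against. Judged on its own terms, your skeleton is sensible: the block decomposition, the distance formulas, the subgraph monotonicity giving $\chi_p(P_n\circ 2K_1)\ge\chi_p(P_n\circ K_1)$, and the degree obstruction of Proposition~\ref{obs:no1} are all correct and are the right tools for the small thresholds $n=2$ and $n=3$, and your observation that a crude per-colour capacity count cannot settle the $6$-colour threshold is accurate.

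However, the two components that actually carry the statement are left as declared intentions rather than arguments. For the upper bounds you exhibit no colourings; the $5$-colouring that works up to $n=11$ but not at $n=12$, and the periodic $6$-colouring valid for all $n$, are exactly where the content lies, and ``I would build one periodic block for each target value'' is not checkable. For the lower bounds, the decisive claim is that five colours fail from $n=12$ onward; you propose a block-classification/window argument but do not identify the actual obstruction, do not bound the window length, and give no reason why the threshold falls at $12$ rather than $11$ or $13$ --- the exactness of that cutoff is the whole point. The same precision issue, in milder form, affects $\chi_p\ge 5$ for $5\le n\le 9$, which you reduce to ``a short finite check'' without performing it. As it stands the proposal is a plausible roadmap whose hardest legs --- precisely the ones that distinguish this result from a routine estimate --- remain untraveled, so it cannot be accepted as a proof.
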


\begin{prop}[\cite{laiche2017packing}]
The packing chromatic number of the corona  graph $P_n \circ 3K_1$ is given by $$\chi_p(P_n \circ 3K_1)=
\left\{
\begin{array}{ll}
2 & \text{ if } n=1,\\
3 & \text{ if } n=2,\\
4 & \text{ if } n\in \{3,4\},\\
5 & \text{ if } 5 \leq n \leq 8,\\
6 & \text{ otherwise.}
\end{array}\right.
$$
\end{prop}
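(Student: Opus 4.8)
The plan is to prove matching upper and lower bounds for $G_n:=P_n\circ 3K_1$, exploiting two structural features: writing $v_1\cdots v_n$ for the spine with three leaves attached to each $v_i$, every internal spine vertex has degree $5$ and each endpoint has degree $4$, while the three leaves of a common $v_i$ are pairwise at distance $2$. The decisive lever is that color $1$ can be placed freely on leaves (any two leaves are at distance $\ge 2$), so the natural strategy is to color every leaf with $1$ and to color the spine with a palette avoiding $1$. By Proposition~\ref{obs:no1}, if $\chi_p(G_n)\le x$ then no vertex of degree $\ge x$ receives color $1$; hence for the small palettes relevant here the spine is forced off color $1$, and the problem reduces to an ``$S$-packing'' question on the path (distances along the spine coincide with distances in $G_n$). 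I also use that $\chi_p$ is monotone under subgraphs, since restricting a packing coloring of $G$ to $H\subseteq G$ stays valid because $d_H(u,v)\ge d_G(u,v)$.

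For the upper bounds I would color all leaves with $1$ and color the spine $P_n$ with colors from $\{2,3,\dots\}$: the prefixes of $2\,3\,4\,2\,5\,3\,2\,4$ (so $P_8\mapsto 2\,3\,4\,2\,5\,3\,2\,4$, $P_5\mapsto 2\,3\,4\,2\,5$, $P_4\mapsto 2\,3\,4\,2$), checking directly that vertices receiving color $c$ lie pairwise at spine-distance $\ge c+1$; since leaves all carry $1$ and the spine carries no $1$, these are valid packing colorings, giving $\chi_p(G_n)\le 5$ for $4\le n\le 8$ and $\le 4$ for $n=4$. For $n\ge 9$ I would exhibit a periodic assignment of $\{2,3,4,5,6\}$ to the spine (place color $2$ on a density-$\tfrac13$ arithmetic progression and distribute $3,4,5,6$ on the remaining positions; the reciprocal densities $\tfrac13+\tfrac14+\tfrac15+\tfrac16+\tfrac17>1$ leave room for such a period), yielding $\chi_p(G_n)\le 6$. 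The cases $n=1,2,3$ are immediate, since $K_{1,3}$ has packing chromatic number $2$ and the spine patterns $2\,3$ and $2\,3\,4$ give $3$ and $4$.

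For the lower bounds I would force the spine palette via Proposition~\ref{obs:no1} and then rule out the reduced coloring by counting. Since a color-$c$ class occupies at most $\lceil n/(c+1)\rceil$ spine positions, for $n\in\{3,4\}$ taking $x=3$ forces every spine vertex off $1$ (all degrees $\ge 4$), so the spine would need $\{2,3\}$; but $\lceil n/3\rceil+\lceil n/4\rceil<n$ for these $n$, giving $\chi_p\ge 4$ (and similarly $\chi_p(G_2)\ge 3$). For $n=5$, taking $x=4$ again forces the whole spine onto $\{2,3,4\}$; a short partition analysis of $P_5$ excludes this, since covering $5$ positions needs $n_3=2$, which forces the color-$3$ pair to be $\{1,5\}$, after which the positions $\{2,3,4\}$ cannot accommodate two pairwise-distance-$\ge 3$ color-$2$ vertices. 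Thus $\chi_p(G_5)\ge 5$, and by subgraph-monotonicity $\chi_p(G_n)\ge 5$ for all $n\ge 5$.

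The crux is the lower bound $\chi_p(G_9)\ge 6$, and here lies the main obstacle: the endpoint degree $4$ is strictly below the target palette size $5$, so Proposition~\ref{obs:no1} forbids color $1$ only on the seven internal vertices $v_2,\dots,v_8$, not on the endpoints, and one must treat two regimes. If neither endpoint is colored $1$, the whole spine $P_9$ must be $\{2,3,4,5\}$-colored; I would rule this out by exhausting the four compositions of $9$ compatible with the caps $(n_2,n_3,n_4,n_5)\le(3,3,2,2)$, each failing (either $n_2=n_3=3$ is infeasible because $n_3=3$ forces positions $\{1,5,9\}$, leaving no room for three color-$2$ vertices, or the positions left for the remaining colors violate a spacing constraint). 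If instead an endpoint is colored $1$, say $v_1=1$, then its three leaves must carry three distinct colors of $\{2,3,4,5\}$, which forces $v_2$ to be the unique missing color and then propagates deterministically: each successive $v_3,v_4,\dots$ is pinned by adjacency to its predecessor together with the spacing obligations of the three end-leaves, and within a few steps some $v_i$ is left with no admissible color — a contradiction. Hence both endpoints lie in $\{2,3,4,5\}$, reducing to the first regime, and $\chi_p(G_9)\ge 6$; subgraph-monotonicity extends this to all $n\ge 9$, matching the upper bound and completing the proof.
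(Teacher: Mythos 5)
This proposition is quoted from \cite{laiche2017packing}; the paper gives no proof of it, so your attempt can only be judged on its own merits. For $n\le 8$ your argument is essentially sound (explicit spine patterns for the upper bounds, Proposition~\ref{obs:no1} plus packing-size counting on the spine for the lower bounds, and subgraph monotonicity), although the $n=9$ lower bound is only sketched: in the subcase where color $3$ occupies $\{1,5,9\}$ you still need to rule out $n_2=2$ together with $n_4=n_5=2$, and the ``propagates deterministically'' claim in the endpoint-colored-$1$ regime is asserted rather than carried out. Those are completable.

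The genuine gap is the upper bound for $n\ge 9$. Your strategy is to put color $1$ on every leaf and color the spine from $\{2,3,4,5,6\}$, justified only by the density inequality $\tfrac13+\tfrac14+\tfrac15+\tfrac16+\tfrac17>1$. That inequality is necessary but nowhere near sufficient, and in fact no such spine coloring exists for large $n$: if $P_n$ admitted a $\{2,3,4,5,6\}$-packing coloring for all $n$, then coloring all leaves $1$ would give $\chi_p(P_n\circ pK_1)\le 6$ for \emph{every} $p$ and every $n$, contradicting Proposition~\ref{prop:cater}, which states that $\chi_p(P_n\circ pK_1)=7$ for $n\ge 35$. So your construction is provably impossible for $n\ge 35$, and even in the range $9\le n\le 34$ you have exhibited no actual pattern. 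The reason $p=3$ stays at $6$ while $p\ge 4$ climbs to $7$ is exactly the feature your approach discards: an internal spine vertex has degree $5<6$, so Proposition~\ref{obs:no1} does not forbid color $1$ there, and a correct $6$-coloring for large $n$ must place color $1$ on some spine vertices and assign their three leaves three distinct colors from $\{2,\dots,6\}$; with four or more leaves this escape route closes. Any repair of your proof has to build such a mixed pattern rather than a pure $\{2,\dots,6\}$ spine coloring.
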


\begin{prop}[\cite{laiche2017packing}]
    The packing chromatic number of the corona  graph $P_n \circ pK_1$ is given
 by
$$\chi_p(P_n \circ pK_1)=
\left\{
\begin{array}{ll}
2 & \text{ if } n=1,\\
3 & \text{ if } n=2,\\
4 & \text{ if } n\in \{3,4\},\\
5 & \text{ if } 5 \leq n \leq 8,\\
6 & \text{ if } 9 \leq n \leq 34,\\
7 & \text{ otherwise.}
\end{array}\right.
$$\label{prop:cater}
\end{prop}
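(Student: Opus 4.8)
The plan is to reduce the whole problem to packing-colouring only the spine path $P_n=v_1\cdots v_n$ with colour $1$ forbidden there. First I would record two structural facts about $G:=P_n\circ pK_1$. Since the pendant leaves create no shortcuts, $d_G(v_i,v_j)=|i-j|$ and $\mathrm{diam}(G)=n+1$. The set of all leaves is a single $1$-packing (two leaves of the same spine vertex lie at distance $2$, any other two at distance at least $3$), so colouring every leaf with $1$ is always admissible. Conversely, if $p\ge 4$ then colouring a spine vertex $v_i$ with $1$ is wasteful: its $p$ leaves are then forbidden colour $1$ (they are at distance $1$ from $v_i$) and are pairwise at distance $2$, so they require $p$ pairwise distinct colours $\ge 2$, already forcing colours $2,\dots,p+1$ to appear. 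I would use these two facts to argue that an optimal colouring may be taken to give every leaf colour $1$ and every spine vertex a colour $\ge 2$, so that $\chi_p(G)=f(n)$, where $f(n)$ denotes the least $k$ for which $P_n$ admits a packing colouring using only the palette $\{2,3,\dots,k\}$ (colour $c$ requiring pairwise distance $\ge c+1$). The stated table then becomes exactly the computation of $f(n)$, and in particular the fact that it is independent of $p$ for $p\ge 4$.

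For the upper bound I would exhibit explicit, eventually periodic spine patterns realising $f(n)\le 4,5,6,7$ on the respective ranges, in the same pattern-and-verify spirit used for the path-aligned products in Section \ref{subsec:cycles}. Colour $2$ is laid down on a maximum $2$-packing (every third spine vertex), and the residual pairs of consecutive uncoloured vertices are filled by cyclically interleaving $3,4,5,\dots$; for instance a block such as $2\,3\,4\,2\,5\,3\,2\,4$ handles $5\le n\le 8$ with the four colours $2,3,4,5$, and longer periods bring in colour $6$ and then $7$. For each range I would state the period together with the short prefix and suffix that absorb $n$ modulo the period length, and then check directly that any two vertices sharing colour $c$ are at distance at least $c+1$.

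The hard part is the matching lower bound, namely that the thresholds fall exactly at $n=3,5,9,35$. The crude density inequality $\sum_{c=2}^{k}\lceil n/(c+1)\rceil\ge n$ does \emph{not} locate the thresholds $8\to 9$ and $34\to 35$: for the palette $\{2,\dots,6\}$ the fractional capacity $\tfrac13+\tfrac14+\tfrac15+\tfrac16+\tfrac17>1$ already exceeds density $1$, yet $f(35)=7$, so the obstruction is not density but the impossibility of realising the individual maximum packings simultaneously. This is precisely an $S$-packing colourability statement for finite paths with distance sequence $(3,4,5,\dots)$: the path $P_n$ is colourable with $\{2,\dots,6\}$ exactly up to $n=34$ and fails at $n=35$, while the palette $\{2,\dots,7\}$ colours $P_n$ for every $n$ via a suitable periodic word. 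I would establish the negative direction by a windowed counting/exchange argument: within a window of the critical length, fix an optimal placement of colour $2$, observe that the surviving vertices split into short blocks on which the colours $\ge 3$ must themselves form a strictly sparser packing, and iterate to bound the number of coverable vertices below the window size, which forces an extra colour. Pinning the two large thresholds exactly is the main obstacle, and it is where a finite case check --- or the integer-programming verification already employed in Section \ref{subsec:complete} --- is decisive.

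Finally I would close the gaps introduced by the reduction, which is where the dependence on $p$ actually lives. That no spine vertex receives colour $1$ follows from Proposition \ref{obs:no1} whenever the palette size $k$ satisfies $k\le p+2$ (interior vertices, degree $p+2$) and $k\le p+1$ (endpoints, degree $p+1$). For large $p$ this covers every case, but for $p\in\{4,5\}$ the palettes $k=6,7$ slip past these bounds at the endpoints, and for $p=4,\ k=7$ even at interior vertices. For those finitely many regimes I would verify directly that assigning colour $1$ to a spine vertex is never advantageous: its $p\ge 4$ leaves are then driven onto $p$ distinct colours $\ge 2$, and the cost of recolouring them outweighs any saving from freeing colour $1$ on the spine, so the maximum colour cannot drop below $f(n)$. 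This is exactly the mechanism separating $p\ge 4$ from the regimes $p\le 3$ of the preceding propositions, where paying with only one, two, or three leaves remains affordable and keeps the packing chromatic number lower.
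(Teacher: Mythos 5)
This proposition is imported verbatim from \cite{laiche2017packing}; the paper under review offers no proof of it, so there is nothing internal to compare your argument against. Judged on its own terms, your reduction is the right one and matches the standard approach: for $p\ge 4$ one normalizes so that every leaf gets colour $1$ and no spine vertex does, whence $\chi_p(P_n\circ pK_1)$ equals the least $k$ for which $P_n$ is packing-colourable from the palette $\{2,\dots,k\}$, and the small values $n\le 8$ check out. Your diagnosis that the naive density bound $\sum_{c=2}^{k}\lceil n/(c+1)\rceil\ge n$ cannot locate the thresholds $8\to 9$ and $34\to 35$ is also correct and is exactly why the result is nontrivial.

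The genuine gap is that the content of the proposition lives precisely in those two thresholds, and you do not prove them. Showing that $P_{9}$ is not colourable from $\{2,3,4,5\}$ and, far more seriously, that $P_{35}$ is not colourable from $\{2,3,4,5,6\}$ while $P_n$ is colourable from $\{2,\dots,7\}$ for all $n$ is the entire difficulty; ``a windowed counting/exchange argument'' is named but never specified, and deferring to an ILP check is an admission that the argument is absent, not a proof. A complete proof must either carry out the window analysis (fix the placement of colour $2$ on a critical segment, show the residual blocks cannot be covered by the sparser colours, and propagate) or give a rigorous finite-case exhaustion with an explicit periodic $\{2,\dots,7\}$-pattern for the upper bound at $n\ge 35$. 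A secondary, smaller gap: your normalization relies on Proposition~\ref{obs:no1}, which for $p=4$ does not forbid colour $1$ on the endpoints in a putative $6$- or $7$-colouring (degree $p+1=5$), nor on interior vertices in a $7$-colouring (degree $p+2=6$); your claim that ``the cost of recolouring outweighs any saving'' is an intuition, not an argument --- you would need to show concretely that an endpoint coloured $1$ forces its $p\ge 4$ leaves onto $p$ distinct colours $\ge 2$ whose reuse radii then make the remaining $n-1$ spine vertices uncolourable within the palette. As written, the proposal is a sound outline with its two hardest steps unexecuted.
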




\begin{definition}\label{def:caterpillar}
   A caterpillar graph $C(l;m_1,\ldots, m_l)$ \emph{of a length $l$, denoted also by $CT_l$, is a tree in which the removal of all pendant edges results in a chordless path $P_l$ called the \emph{backbone} of the caterpillar. In addition, we assume $V(P_l)=\{v_1,\ldots,v_l\}$ with $\{v_i,v_{i+1}\} \in E(P_l)$ for each $i\in \{1,\ldots,l-1\}$, and each $v_i$ is incident to $m_i$ leaves (cf. Fig.~\ref{fig:caterpillar_ex}).}
\end{definition}


\begin{figure}[H]
\centering
\begin{tikzpicture}[scale=1.0, every node/.style={circle, draw=black, fill=white, inner sep=1.5pt}]

\foreach \i/\x/\col in {
  0/0/\textcolor{white}{\small{$v_0$}}, 1/1.2/\small{$v_1$}, 2/2.4/\small{$v_2$}, 3/3.6/\small{$v_3$}, 4/4.8/\small{$v_4$},
  5/6/\textcolor{white}{\small{$v_5$}}
} {
  \node (v\i) at (\x,0) {\col};
}

\foreach \i in {0,...,4} {
  \pgfmathtruncatemacro{\j}{\i + 1}
  \draw[thick] (v\i) -- (v\j);
}
\foreach \i/\x in {
  1/1.2,
  2/2.4}{

  \pgfmathtruncatemacro{\j}{\i + 1}
  \node (a\i) at (\x,1) {\textcolor{white}{\tiny{$a_i$}}};
  \draw[thick] (v\i) -- (a\i);
}
\foreach \i/\x in {
  1/1}{  
  \node (b\i) at (\x - 0.4,1) {\textcolor{white}{\tiny{$b_i$}}};
   \node (c\i) at (\x + 0.8,1) {\textcolor{white}{\tiny{$c_i$}}};
  \draw[thick] (v\i) edge (b\i);
  \draw[thick] (v\i) -- (c\i);
}
\end{tikzpicture}
\caption{The caterpillar $C(4;4,1,0,1)$ with the backbone $v_1v_2v_3v_4$ of length 4.}\label{fig:caterpillar_ex}
\end{figure}
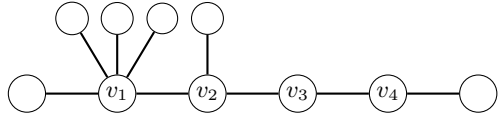

Sloper \cite{sloper} proved that eccentric chromatic number of any caterpillar $C$ are bounded from the above by 7. Since every eccentric coloring is also a packing coloring with the additional requirement that each color must be larger than the eccentricity of the corresponding vertex, the result in \cite{sloper} also implies that
$$\chi_p(C)\leq 7.$$

This result can be improved by using Proposition \ref{prop:cater} as follows.
\begin{corollary}
    Let $C=C(l;m_1,\ldots,m_l)$ be a caterpillar of length $l$. Then $\chi_p(C)\leq 6$ for $l\leq 34$, and $\chi_p(C)\leq 7$ otherwise. In both cases, the bounds are tight.
\end{corollary}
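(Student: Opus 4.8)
The plan is to establish both the upper bound and the tightness by reducing the caterpillar to an associated path-coloring problem. The key observation is that a caterpillar $C(l;m_1,\ldots,m_l)$ contains, for each backbone vertex $v_i$, at most $m_i$ leaves, and no vertex has more than a handful of leaves that matter for coloring purposes: any number of leaves attached to a single vertex $v_i$ can be colored using just colors $1$ and $2$ as long as $v_i$ itself avoids these colors appropriately. Thus the coloring behavior of a caterpillar is governed by its backbone together with the worst case of having arbitrarily many leaves, which is precisely the situation captured by $P_l \circ pK_1$ for large $p$. I would first argue that $\chi_p(C) \le \chi_p(P_l \circ pK_1)$ whenever the number of leaves per backbone vertex is at most $p$; since adding more pendant vertices can only increase (or maintain) the packing chromatic number, the caterpillar on backbone $P_l$ is dominated by the $p$-corona with $p = \max_i m_i$, and taking $p$ large enough (indeed the bound in Proposition~\ref{prop:cater} stabilizes at $p$ arbitrarily large) yields $\chi_p(C) \le \chi_p(P_l \circ pK_1)$.

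Given this domination, I would apply Proposition~\ref{prop:cater} directly. For $l \le 34$, the value of $\chi_p(P_l \circ pK_1)$ is at most $6$ (it equals $6$ in the range $9 \le n \le 34$ and is smaller below), giving $\chi_p(C) \le 6$. For $l \ge 35$, we fall into the ``otherwise'' case where $\chi_p(P_l \circ pK_1) = 7$, which combined with Sloper's bound $\chi_p(C)\le 7$ gives $\chi_p(C) \le 7$. The monotonicity claim—that attaching pendant leaves to a backbone vertex never decreases the packing chromatic number and that the caterpillar is a subgraph of a suitable corona—must be made precise; the cleanest route is to note that $C(l;m_1,\ldots,m_l)$ is an induced subgraph of $P_l \circ pK_1$ for $p = \max_i m_i$, and that packing chromatic number is monotone under taking subgraphs with the same vertex-distance structure, or more carefully, that any packing coloring of the corona restricts to one of the caterpillar.

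For tightness, I would exhibit caterpillars achieving each bound. The bound $6$ for some $l \le 34$ is witnessed by taking $C = P_l \circ pK_1$ itself (a caterpillar with $m_i = p$ for all $i$) with $9 \le l \le 34$ and $p$ chosen in the appropriate range from Proposition~\ref{prop:cater}, which has packing chromatic number exactly $6$. Likewise, the bound $7$ for $l \ge 35$ is witnessed by $P_l \circ pK_1$ with $l \ge 35$ and $p$ large, whose packing chromatic number equals $7$ by Proposition~\ref{prop:cater}. Since these coronae are themselves caterpillars, tightness in both regimes follows immediately.

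The main obstacle I anticipate is making the subgraph/monotonicity argument rigorous: packing chromatic number is \emph{not} monotone under arbitrary subgraphs in general, because deleting edges can \emph{decrease} distances only by lengthening shortest paths, so one must verify that the distances in $C$ between any two of its vertices agree with (or exceed) their distances in the corona $P_l \circ pK_1$. In fact, since $C$ is an induced subgraph of the corona obtained by deleting surplus pendant leaves, and deleting pendant leaves does not alter distances between the remaining vertices, the restriction of any packing coloring of $P_l \circ pK_1$ to $V(C)$ remains a valid packing coloring of $C$; this gives $\chi_p(C) \le \chi_p(P_l \circ pK_1)$ cleanly. I would make this distance-preservation observation explicit, as it is the crux that legitimizes the reduction.
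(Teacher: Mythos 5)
Your proposal is correct and follows essentially the same route as the paper, which derives the corollary directly from Proposition~\ref{prop:cater} (the paper gives no further written proof); your explicit justification that deleting surplus pendant leaves preserves distances, so a caterpillar inherits any packing coloring of $P_l \circ pK_1$ with $p=\max_i m_i$, together with the coronae themselves as tight examples, is exactly the intended argument. (Your worry about subgraph monotonicity is in fact unnecessary here, since $\chi_p$ is monotone under arbitrary subgraphs because distances can only increase, but your more careful distance-preservation argument is certainly valid.)
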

    
Goddard et al. \cite{goddard2008broadcast} provided a polynomial-time recognition algorithm for graphs $G$ with $\chi_p(G)\leq 3$. In this section, we provide a structural characterization of caterpillars with $\chi_p(G)\leq 3$, implying a polynomial-time recognition algorithm for caterpillars in particular. 

\subsection{Characterization of caterpillars with $\chi_p(CT_l)=3$}\label{sec:char3}

First, we define families of caterpillars to be used in this section as follows.
\begin{definition}
~~
    \begin{itemize}
        \item $\mathcal{G}_1=C(4k; m_1, m_2, 0,m_4,0, m_6, \ldots,0,m_{4k})$ for any $k\geq 1$,
        \item $\mathcal{G}_2=C(4k+1; 1, m_2,0,m_4,\ldots,0,m_{4k},m_{4k+1})$ for any $k\geq 1$,
        \item $\mathcal{G}_3=C(4k+1; m_1, 0,m_3,\ldots, 0,m_{4k+1})$ for any $k\geq 1$,
    \item $\mathcal{G}_4=C(4k+2; m_1, m_2,0,m_4,\ldots, 0,m_{4k+2})$ for any $k\geq 0$,
    \item $\mathcal{G}_5=C(4k+3; m_1, m_2, 0, m_4, \ldots, 0, m_{4k+2},m_{4k+3})$ for any $k\geq 1$,
    \item $\mathcal{G}_6=C(3; m_1,m_2,1)$,
    \item $\mathcal{G}_7=C(4k+3;m_1,0,m_3,\ldots, 0,m_{4k+3})$ for any $k\geq 0$,
    \end{itemize}
    where ``$\ldots$" means alternating subsequence of 0 and $m_i$'s.
\end{definition}

\begin{theorem} \label{thm:caterpillar}
Let $G$ be a caterpillar $C(l;m_1,m_2,\ldots,m_l)$ with $l\geq 2$, $m_1,m_l \geq 1$, and $m_2,\ldots,m_{l-1}\geq 0$. Then, $\chi_p(G)=3$, if and only if $G \in \cup_{i=1}^{7}\mathcal{G}_i$. 

\end{theorem}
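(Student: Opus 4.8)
The plan is to prove both implications, anchored by two local observations about any packing $3$-coloring $c$. First, by Proposition~\ref{obs:no1} with $x=3$, every backbone vertex of degree at least $3$ receives color $2$ or $3$; in particular every internal leaf-bearing vertex, and every endpoint carrying at least two leaves, is colored $2$ or $3$. Second, no three consecutive backbone vertices $v_{i-1},v_i,v_{i+1}$ can all lie in $\{2,3\}$: since $v_{i-1},v_{i+1}$ are at distance $2$ they cannot share color $2$ (needs distance $\ge 3$) nor color $3$ (needs distance $\ge 4$), so $\{v_{i-1},v_{i+1}\}=\{2,3\}$, and then $v_i$, adjacent to both, can take no color at all. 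Hence among internal vertices, leaf-bearing positions never occur three in a row. For the lower bound $\chi_p(G)\ge 3$, I would note that a leaf of $v_1$, the backbone $v_1\cdots v_l$, and a leaf of $v_l$ form an isometric copy of $P_{l+2}$ with $l+2\ge 4$ vertices; restricting $c$ to an isometric subgraph gives a packing coloring of it, so $\chi_p(P_n)=3$ for $n\ge 4$ forces at least three colors. Thus it remains only to decide when three colors suffice.

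Sufficiency I would handle family by family with an explicit periodic coloring built from the path pattern $1\,2\,1\,3$. For the families where leaves sit on the even positions together with the two initial ones ($\mathcal G_1,\mathcal G_4$), color the backbone with prefix $v_1v_2=2\,3$ and then repeat $1\,2\,1\,3$, so that positions $\equiv 0,2 \pmod 4$ get $2,3$ respectively and all odd positions $\ge 3$ get $1$; every leaf is colored $1$, which is permissible since its unique backbone neighbor is colored $2$ or $3$. Verifying the packing conditions then reduces to a residue-class check modulo $4$ that color-$2$ occurrences are at pairwise distance $\ge 3$ and color-$3$ occurrences at distance $\ge 4$. For the odd-length even-position families ($\mathcal G_2,\mathcal G_5$) the same pattern is used but adjusted at the last vertex: for $l\equiv 3 \pmod 4$ the terminal vertex can be recolored $2$ at distance exactly $3$ from the previous $2$ (so both endpoints may bear leaves, as in $\mathcal G_5$), whereas for $l\equiv 1 \pmod 4$ this fails, which is precisely why $\mathcal G_2$ imposes $m_1=1$, allowing $v_1$ (now of degree $2$) to be colored $1$ with its single leaf colored $2$ or $3$. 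For the odd-position families ($\mathcal G_3,\mathcal G_7$) I color all even positions $1$ and alternate $2,3$ along the odd positions; the sporadic case $\mathcal G_6$ is colored $2\,3\,1$ on the backbone with the single leaf of $v_3$ colored $2$.

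For necessity I would start from a packing $3$-coloring, use the two local facts to conclude that leaf-bearing internal vertices are colored $2$ or $3$ with no three in a row, and then run a global distance argument on the backbone. The decisive step is that the positions colored $2$ or $3$, read as an alternating $\{2,3\}$-sequence, are almost entirely pinned down by the constraints that two color-$3$ vertices lie at distance $\ge 4$ and two color-$2$ vertices at distance $\ge 3$: this forces, up to the reflection $C(l;m_1,\dots,m_l)\simeq C(l;m_l,\dots,m_1)$, the value $v_1=2$ and then the full $2/3$ assignment, after which whether the last leaf-bearing position can avoid color $1$ depends only on $l \bmod 4$. This yields the even-position families for $l\equiv 0,2 \pmod 4$, and for odd $l$ either the odd-position families or the endpoint-corrected families $\mathcal G_2,\mathcal G_5$, with a degree-$2$ endpoint colored $1$ being exactly the source of the conditions $m_1=1$ and $m_3=1$; the residual length-$3$ configurations with $m_2\ge 1$ give $\mathcal G_6$.

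I expect the main obstacle to be the necessity direction. Making the global $2/3$-placement argument rigorous and genuinely exhaustive across all residues $l \bmod 4$ and all endpoint configurations is delicate, and one must confirm both that the seven families (interpreted up to reflection) cover every surviving caterpillar and that they are mutually consistent where they overlap, for instance $\mathcal G_6$ versus $\mathcal G_7$ at $l=3$ and $\mathcal G_2,\mathcal G_5$ versus $\mathcal G_3,\mathcal G_7$ on odd lengths. By contrast, the sufficiency colorings and the $\chi_p\ge 3$ bound are routine once the right periodic patterns are fixed.
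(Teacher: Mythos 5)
Your overall strategy matches the paper's: Proposition~\ref{obs:no1} with $x=3$ to force colors $2$ or $3$ on high-degree backbone vertices, propagation of an essentially periodic $[1\,2\,1\,3]$-type pattern along the backbone, a residue analysis of $l \bmod 4$ at the end of the backbone, and explicit patterns for sufficiency. The sufficiency half, the lower bound via an isometric $P_{l+2}$, and your two local observations are all correct (the paper does not even spell out the $\chi_p\ge 3$ bound, so that is a small improvement). The problem is that the necessity half, which is the entire substance of the theorem, is left as a plan rather than a proof, and the one concrete structural claim you make there is false: it is not true that the $\{2,3\}$-placement ``forces, up to reflection, the value $v_1=2$ and then the full $2/3$ assignment.'' An endpoint with exactly one leaf may be colored $1$ (this is precisely what generates $\mathcal{G}_2$, $\mathcal{G}_6$, and the $m_l=1$ subcases of $\mathcal{G}_5$), and $v_1$ may also be colored $3$ (the paper's colorings of $\mathcal{G}_3$ and $\mathcal{G}_7$ start with $3$ or place $3$ at $v_3$ after $2\,1$); reflection does not collapse these cases, since both endpoints can simultaneously carry many leaves, as in $\mathcal{G}_3$.

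The missing ingredient is the exhaustive branching that actually pins down the pattern. The paper branches on all six ordered pairs $\bigl(c(v_1),c(v_2)\bigr)$ and shows in each case that the coloring of the backbone is forced up to a single possible deviation at the final block (the neighbor of a $3$ may take color $2$ only if the backbone terminates there), which is exactly what produces the seven families and their endpoint conditions. Your sketch gestures at this (``whether the last leaf-bearing position can avoid color $1$ depends only on $l\bmod 4$'') but never establishes the key propagation lemma --- that after $v_1,v_2$ are fixed, each subsequent backbone vertex has at most one admissible color except adjacent to a $3$, and that choosing $2$ there kills all continuations --- nor does it verify that mixed configurations (e.g.\ leaf-bearing internal vertices at both odd and even positions) are excluded. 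You correctly anticipate that this is where the difficulty lies; as written, the necessity direction is a program, not an argument, and its guiding claim needs to be weakened before it can be carried out.
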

\begin{proof}
    The proof is based on the observation that the colors assigned to $v_1$ and $v_2$ largely constrain the coloring of the remaining vertices in the caterpillar $G$.

 ($\Rightarrow$) Let $G$ be a caterpillar of the form $C(l;m_1,m_2,\ldots,m_l)$ with $l\geq 2$, $m_1,m_l \geq 1$, and $m_2,\ldots,m_{l-1}\geq 0$ such that $\chi_p(G)=3$. Then, there are six possibilities to assign colors to $v_1$ and $v_2$. We will now analyze these six cases separately. In each case, we start by coloring the vertices $v_1$ and $v_2$. Then, we consider how this coloring can be extended to the remaining vertices of the backbone and to the leaves of $G$. In general, we assume that the vertices of the backbone that receive a color different from 1 may have neighbors of degree 1 corresponding to $m_i \geq 0$, for $2\leq i\leq l-1$, and $m_i \geq 1$, for $i=1$ or $i=l$. Vertices $v_i$ colored 1 may have adjacent leaves only in the case of $i=1$ or $i=l$, and in these cases exactly one leaf is allowed.
~~\\

\emph{Case 1:} Let $c(v_1)=1$ and  $c(v_2)=2$. 

First, suppose that $v_1$ has at least two leaves. Then, the leaves together with $v_2$ would imply that $\chi_p(G) \ge 4$, a contradiction. Moreover, by the definition of a caterpillar, $v_1$ is adjacent to at least one leaf; hence, $v_1$ is adjacent to exactly one leaf. Since $c(v_2)=2$, this leaf has to be colored with $3$. Then, $c(v_3)=1$ and $c(v_4)=3$ are the only possible assignments. It is easy to observe that the coloring pattern for the backbone becomes $[1\ 2\ 1\ 3]^{*}$. 
Indeed, the only undetermined color in this sequence is a color of the neighbor of the vertex colored with 3; that is, the color of vertices $v_{4s+1}$, for $s\geq 1$. Here, the vertex can be colored with 2 or with 1. However, if color 2 is assigned to $v_{4s+1}$, then color 1 must be used for $v_{4s+2}$, which in turn necessitates the use of color 4 for the remaining vertices in the backbone. Therefore, this case is possible only at the end of the backbone where $l=4k+1$.
Hence, let us assume that we have colored $l - (l \bmod 4)$ vertices of the backbone of $G$ with the pattern $[1\ 2\ 1\ 3]^{*}$. Now, we need to consider how to color the rest of $l \bmod 4$ vertices of the backbone. 

\underline{Let $l=4k$:} Here, the pattern $[1\ 2\ 1\ 3]^{*}$ colors all the vertices of the backbone. Hence, $G \in \mathcal{G}_{1}$.


\underline{Let $l=4k+1$:} The only vertex remaining uncolored in the backbone is $v_{4k+1}$, which may be assigned color 1 or 2. If color 1 is assigned to $v_{4k+1}$, then it may have only one adjacent leaf, which must be assigned color 2, resulting in a graph $G\in \mathcal{G}_2$; otherwise, the resulting graph is again in  $G\in \mathcal{G}_2$, where multiple adjacent leaves are permissible for $v_{4k+1}$.


\underline{Let $l=4k+2$:} Now, we have two uncolored vertices in the backbone and the only possible extension of the pattern to those vertices is to assign color 1 to $v_{4k+1}$ and color 2 to $v_{4k+2}$.  
In this case, we have $G\in \mathcal{G}_4$.

\underline{Let $l=4k+3$:} If $k=0$, then we color the vertices of the backbone with colors $1,2,1$, consecutively. Note that, this is the only possible coloring since we have exactly one leaf adjacent to $v_1$ that is colored with 3. Moreover, $v_3$ has to be adjacent to exactly one leaf that is also assigned color 3. Therefore, $G \in C(3;1,m_2,1)$ which is in the family $\mathcal{G}_6$. 
Now, let $k\geq 1$. Since $c(v_{4k})=3$, it follows that $c(v_{4k+1})=1$, $c(v_{4k+2})=2$, and $c(v_{4k+3})=1$. Note that, the leaf adjacent to $v_{4k+3}$ must be assigned color 3. Indeed, the distance of this leaf to $v_{4k}$, which is also colored 3, is four; hence, the resulting coloring constitutes a feasible packing coloring. So, $G \in C(4k+3; 1, m_2, 0, m_4, 0, m_6, \ldots, 0, m_{4k+2},1)$, for any $k\geq 1$, and therefore belongs to the family $\mathcal{G}_5$.

~~\\
\emph{Case 2:} Let $c(v_1)=1$ and  $c(v_2)=3$.

Here, we follow a similar approach. The vertex $v_1$ may have only one leaf that must be assigned color 2. To extend the coloring to the backbone, we are required to use the coloring pattern $[1\ 3\ 1\  2]^*$. Indeed, the only undetermined color in this sequence is the color of the neighbor of the vertex colored with 3, namely, the color of vertices $v_{4s+3}$, for $s\geq 1$. Here, the vertex may be colored with 1 or 2. However, if color 2 is assigned to $v_{4s+3}$, then $v_{4s+4}$ must be colored with 1, which subsequently necessitates the use color 4 for the remaining vertices along the backbone. Therefore, such a situation can occur only at the end of the backbone, and moreover, only for graphs $G$ with $l=4k+3$.
\\Hence, let us assume that the initial $l - (l \bmod 4)$  vertices of the backbone of $G$ are colored using the pattern $[1\ 3\ 1\ 2]^*$. In the following, we consider how to extend the coloring for the remaining $l \bmod 4$ vertices of the backbone.

\underline{Let $l=4k$:} Here, the backbone is colored by the pattern $[1\ 3\ 1\ 2]^*$, yielding $C(4k;1,m_2,0,m_4,\ldots, 0, m_{4k}) \in \mathcal{G}_{1}$.

\underline{Let $l=4k+1$:} The only vertex  in the backbone remaining uncolored is $v_{4k+1}$, which must be assigned color 1, and it may have at most one adjacent leaf, which must be assigned color 3 (the distance to the nearest vertex colored with 3, that is to $v_{4k-2}$, is 4, yielding a feasible packing coloring). The family of caterpillars admitting this coloring can be described as follows: $C(4k+1; 1, m_2, 0, m_4, \ldots, m_{4k}, 1)$, for any $k\geq 1$, which is a member of $\mathcal{G}_2$.

\underline{Let $l=4k+2$:} Here we have two uncolored vertices in the backbone and the only possible extension of the pattern to those vertices is to assign color 1 to $v_{4k+1}$ and color 3 to $v_{4k+2}$. In this case, the family of caterpillars is described as $C(4k+2;1,m_2,0,m_4,0,m_6, \ldots,0,m_{4k+2}) \in  \mathcal{G}_4$, for any $k\geq 0$.

\underline{Let $l=4k+3$:} The three uncolored vertices may be assigned the colors $1, 3, 1$ or $1,3, 2$, respectively. It is easy to see that the latter coloring permits a larger number of leaves adjacent to $v_l$. In either extension, we have $G\in \mathcal{G}_5$, provided that $k\geq 1$.  If $k=0$, that is, if $l=3$, we likewise apply the coloring  $1, 3, 1$ or $1,3, 2$ to the vertices of the backbone; however, accounting for symmetry, this results in caterpillars belonging to the family $\mathcal{G}_6$.

~~\\
\emph{Case 3:} Let $c(v_1)=2$ and  $c(v_2)=1$. 

First, note that unlike in the previous two cases, vertex $v_1$ may have any number of leaves attached. To extend the coloring of $v_1$ and $v_2$ to the entire backbone we obtain the pattern $[2\ 1\ 3\ 1]^*$. Again, the only undetermined color in this sequence is the color of the neighbor of the vertex colored with 3, that is, the color of the vertices $v_{4s}$, for $s\geq 1$. Here, the vertex may be colored with 1 or 2. Nevertheless, if color 2 is assigned to $v_{4s}$, then $v_{4s+1}$ must be colored with 1, which subsequently necessitates the use of color 4 for the remaining vertices along the backbone. Thus, such a situation can occur only at the end of the backbone and, moreover, only for graphs $G$ with $l=4k$, where $k\geq 1$. Therefore, $v_{4s}$ must be assigned color 1, for any $ 1 \leq s\leq k -1$.
Hence, let us assume that we have colored $l - (l \bmod 4)$ initial vertices of the backbone of $G$ using the pattern $[2\ 1\ 3\ 1]^*$. In the following, we extend the coloring for the remaining $l \bmod 4$ vertices of the backbone.

\underline{Let $l=4k$:} Here, all vertices of the backbone are colored using either the pattern $[2\ 1\ 3\ 1]^*$ or $[2\ 1\ 3\ 1]^*[2\ 1\ 3\ 2]$. Both situations, taking symmetry into account, yield a caterpillar belonging to the family $\mathcal{G}_{1}$.

\underline{Let $l=4k+1$:} The vertex $v_{4k+1}$ must be assigned color 2 and may have an arbitrary number of adjacent leaves, which must be assigned color 1. Hence, $G\in \mathcal{G}_3$.

\underline{Let $l=4k+2$:} In this case, two vertices of the backbone remain uncolored, and the only possible extension of the pattern assigns color 2 to $v_{4k+1}$ and color 1 to $v_{4k+2}$. Note that $v_l$ may have only one adjacent leaf, which must be assigned color 3. The distance to the other vertices colored with 3 is, once again, consistent with a feasible packing coloring. Thus, $G \in C(4k+2; m_1, 0, m_3, 0, \ldots, m_{4k+1}, 1)$, for any $k\geq 0$, which belongs to the family symmetric to $\mathcal{G}_4$. 

\underline{Let $l=4k+3$:} The three uncolored vertices can only be assigned colors $2, 1, 3$, consecutively. In this case, $G\in \mathcal{G}_7$.

~~\\
\emph{Case 4:} Let $c(v_1)=2$ and $c(v_2)=3$. 

In this case, the extension of the backbone coloring differs substantially from that in the preceding three cases. Note that starting with colors $2$ and $3$, the sequence must continue with colors $1, 2, 1, 3, \ldots$ and the first vertex with undetermined color is $v_7$; in general, $v_{4s+7}$, for $s\geq 0$, that is, the vertex adjacent to the one colored with $3$. At this point, vertex $v_{4s+7}$ may be assigned color 1 or color 2. However, if it is colored with 2, the backbone must terminate at this vertex. In other words, such a coloring is feasible only at the end of the backbone, which occurs when $l=4k+3$, $k\geq 1$. Hence, let us assume that $l - ((l-2) \bmod 4)$ initial vertices of the backbone of $G$ are colored using the pattern $2\ 3\ [1\ 2\ 1\ 3]^*$, and $(l-2)\bmod 4$ vertices of the backbone remain uncolored. We now extend the coloring to these vertices.

\underline{Let $l=4k$:} 
In this case, two vertices remain uncolored, and the extension is uniquely determined: we assign color 1 to $v_{l-1}$ and color 2 to $v_l$. It follows that $G \in C(4k; m_1, m_2,0, m_4,\ldots, 0, m_l)$, which is in $\mathcal{G}_1$.

\underline{Let $l=4k+1$:} 
Here, three vertices remain uncolored, and the only possible extension assigns the colors $1,2,1$, consecutively. Note that $v_l$ can have only one leaf, which must be assigned color 3. Again, the distance to the nearest vertex colored 3, namely $v_{l-3}$, is 4; hence, the coloring is a feasible packing coloring.
In this case, $G\in C(4k+1; m_1,m_2,0,m_4,\ldots,0,m_{4k},1)$, which is in $\mathcal{G}_2$.

\underline{Let $l=4k+2$:} Here, all vertices of the backbone of $G$ are colored using the pattern $2\ 3\ [1\ 2\ 1\ 3]^*$, implying that $G\in \mathcal{G}_4$. 

\underline{Let $l=4k+3$:} In this case, one vertex remains uncolored, namely the vertex $v_l$, and the vertex $v_{l-1}$ is assigned color 3. If $k=0$, that is, the length of the backbone is 3, the vertex $v_l$ must be colored 1, yielding the coloring $2, 3, 1$. Note that $v_l$ may have only one adjacent leaf, which must be assigned color 2. Finally, we obtain a caterpillar from the family $\mathcal{G}_6$. If $k\geq 1$, two possible colors  arise for $v_l$: (i) when $c(v_l)=1$, the vertex $v_l$ may have only a single adjacent leaf, which must be colored 2; or (ii) when $c(v_l)=2$, the vertex $v_l$ may have one or more adjacent leaves. In both cases, $G \in \mathcal{G}_5$. 

~~\\
\emph{Case 5:} Let $c(v_1)=3$ and $c(v_2)=1$. 

Here, we will extend the coloring from $v_1$ and $v_2$ to the entire backbone; in particular, we will show that the coloring pattern $3\ [1\ 2\ 1\ 3]^*$ must be used. Indeed, the only vertex with an undetermined color in this pattern is the neighbor of the vertex colored with 3. Either color 1 or 2 may be used to color this vertex. Nevertheless, color 2 is permissible only at the end of the backbone, that is, when $l=4k+2$. Thus, we assume that the first $l-((l-1) \bmod 4)$ vertices of the backbone of $G$ are colored according to the pattern $3\ [1\ 2\ 1\ 3]^*$. At this stage, $(l-1)\bmod 4$ vertices at the end of the backbone remain uncolored.

\underline{Let $l=4k$:} 
Three vertices remain uncolored, and colors $1,2,1$ must be used consecutively. Observe that $v_l$ may have at most one adjacent leaf, and this leaf must be assigned color 3. The distance to the nearest vertex colored  3, namely $v_{l-3}$, is four; hence, the coloring is feasible. Thus, $G \in C(4k;m_1,0,m_3,0, \ldots, 0, m_{4k-1},1)$, which is in $\mathcal{G}_1$, taking symmetry into account.

\underline{Let $l=4k+1$:} 
All vertices of the backbone of $G$ are colored with the pattern $3\ [1\ 2\ 1\ 3]^*$, and $G \in \mathcal{G}_3$.

\underline{Let $l=4k+2$:} 
Here, one vertex remains uncolored, namely the vertex $v_l$, while vertex $v_{l-1}$ is assigned color 3. There are two possibilities: $(i)$ $v_l$ can be colored 1, in which case it is adjacent to exactly one leaf which must be assigned color 2; or $(ii)$ $v_l$ can be colored 2, in which case it may have any number of adjacent leaves, with $m_l \geq 1$. In both cases, $G_4\in C(4k+2; m_1,0,m_3,0,\ldots,m_{4k+1},m_{4k+2})$, which is in $\mathcal{G}_4$, taking symmetry into account.

\underline{Let $l=4k+3$:} When $k=0$, the colors 3, 1, 2 are assigned to $v_1$, $v_2$, and $v_3$, respectively. Otherwise, the two uncolored vertices $v_{l-1}$ and $v_l$ must be colored with 1 and 2, respectively. Thus, in both cases, $G \in \mathcal{G}_7$.

~~\\
\emph{Case 6:} Let $c(v_1)=3$ and $c(v_2)=2$. 

Observe that such a coloring cannot be extended. In fact, if $v_3$ existed, it would need to be assigned color 1, and its neighbor, whether a leaf or the next vertex on the backbone, would necessarily require color 4, contradiction. 
Hence, $G\in C(2; m_1,m_2)$, which is in $\mathcal{G}_4$.

\vspace{0.3cm}
 ($\Leftarrow$) It remains to show that every graph in $\bigcup_{i=1}^7 \mathcal{G}_i$ has packing chromatic number 3. For each family of graphs in $\bigcup_{i=1}^7 \mathcal{G}_i$, we provide a corresponding coloring pattern for the backbone. The leaves adjacent to the vertices colored 2 or 3 are assigned color 1. When a leaf is adjacent to a vertex colored 1 (that is, to $v_1$ or $v_l$), we explicitly indicate its color.

\begin{quote}
    
\begin{itemize}
        \item $\mathcal{G}_1=C(4k; m_1, m_2, 0,m_4,0, m_6, \ldots,0,m_{4k})$, for any $k\geq 1$.

        Here, the coloring pattern for the backbone is $2\ 3\ [1\ 2\ 1\ 3]^{k-1}\ 1\ 2$.
        \item $\mathcal{G}_2=C(4k+1; 1, m_2,0,m_4,\ldots,0,m_{4k},m_{4k+1})$, for any $k\geq 1$.

        In this case, the backbone follows the coloring pattern $[1\ 2\ 1\ 3]^k \ 2$. Note that $v_1$ has only one adjacent leaf, which must be colored 3.
        \item $\mathcal{G}_3=C(4k+1; m_1, 0,m_3,\ldots, 0,m_{4k+1})$, for any $k\geq 1$.
    
        In this case, the coloring pattern of the backbone is $[3\ 1\ 2\ 1]^k\ 3$.
    \item $\mathcal{G}_4=C(4k+2; m_1, m_2,0,m_4,\ldots, 0,m_{4k+2})$, for any $k\geq 0$.
    
    The coloring pattern for the backbone is $2\ 3\ [1\ 2\ 1\ 3]^k$.
    \item $\mathcal{G}_5=C(4k+3; m_1, m_2, 0, m_4, \ldots, 0, m_{4k+2},m_{4k+3})$, for any $k\geq 1$.
    
    The coloring pattern for the backbone is $2\ 3\ [1\ 2\ 1\ 3]^k \ 2$.
    \item $\mathcal{G}_6=C(3; m_1,m_2,1)$.

    In this case, the coloring pattern $2\ 3\ 1$ is used for the backbone. Note that $v_3$ is adjacent to only one leaf, which must be assigned color 2.
    
    \item $\mathcal{G}_7=C(4k+3;m_1,0,m_3,\ldots, 0,m_{4k+3})$, for any $k\geq 0$.

    The coloring pattern for the backbone is $2\ 1\ 3\ [1\ 2\ 1\ 3]^k$.
    \end{itemize}

\end{quote}
 The proof is complete.
\end{proof}

  Goddard et al. proved in \cite{goddard2008broadcast} that any graph $G$ with $\chi_p(G) \le 3$ can be recognized in polynomial-time, whereas it is NP-complete to determine whether a given graph $G$ has $\chi_p(G) \le 4$. Our structural characterization in Theorem \ref{thm:caterpillar}  also implies a polynomial-time recognition algorithm for caterpillars with $\chi_p(G) \le 3$. It is therefore an interesting research direction to extend our characterization to caterpillars with $\chi_p(G) \le 4$, as this would also address the question of whether a polynomial-time recognition algorithm exists for caterpillars with $\chi_p(G) \le 4$, given that the problem is NP-complete in the general case. Hence, we pose the following open question:

  \begin{question}
      Can caterpillars with $\chi_p(G) \le 4$ be recognized in polynomial-time?
  \end{question}

\section*{Acknowledgements}
This work has been supported by the European Commission's Horizon Europe Research and Innovation programme through the Marie Skłodowska-Curie Actions Staff Exchanges (MSCA-SE) under Grant Agreement no.101182819 (COVER: (C)ombinatorial (O)ptimization for (V)ersatile Applications to (E)merging u(R)ban Problems) and by the Scientific and Technological Research Council of Türkiye (TÜBİTAK) under grant no.124F114.


\begin{thebibliography}{99}
\bibitem{packing1} B. Brešar, S. Klavžar,  D. F. Rall,  On the packing chromatic number of Cartesian products, hexagonal lattice, and trees, \emph{Discrete Applied Mathematics}, 155(17)(2007), 2303--2311.
\bibitem{survey} B. Brešar, J. Ferme, S. Klavžar, and D. F. Rall,  A survey on packing colorings, \emph{Discussiones Mathematicae Graph Theory}, 40(4)(2020), 923--970.
\bibitem{fiala} J. Fiala, P.A. Golovach, Complexity of the packing coloring problem for trees, \emph{Discrete Applied Mathematics}, 158(7)(2010), 771--778.
\bibitem{sloper} C. Sloper, An eccentric coloring of trees. \emph{Australasian Journal of Combinatorics}, 29(2004), 309--322. 


\bibitem{goddard2008broadcast}
W.~Goddard, S.~M.~Hedetniemi, S.~T.~Hedetniemi, J.~M.~Harris, and D.~F.~Rall,
Broadcast chromatic numbers of graphs,
\emph{Ars Combinatoria}, vol.~86, pp. 33--49, 2008.

\bibitem{GoRo13} C. Godsil, F.G. Royle, \emph{Algebraic graph theory}, \emph{Springer Science}, vol.207, 2013.

\bibitem{melih2025ilp} A.M. Gül, T. Ayyıldız, D. Gözüpek, \emph{Integer linear programming implementation for packing coloring in graphs}, \url{https://github.com/gulmelih/ILP_Packing_Coloring}, 2025.

\bibitem{laiche2017packing}
D.~La{\"i}che, I.~Bouchemakh, and {\'E}.~Sopena,
Packing coloring of some undirected and oriented coronae graphs,
\emph{Discussiones Mathematicae Graph Theory}, 37(3)(2017), 665--690.

\bibitem{leijon2025evcharging}
J. Leijon,
Charging strategies and battery ageing for electric vehicles: A review,
\emph{Energy Strategy Reviews}, 57(2025), 101641.

\bibitem{shao2015ilp} Z. Shao, A. Vesel, Modeling the packing coloring problem of graphs, \emph{Applied Mathematical Modelling}, 39(2015), 3588--3595.

\end{thebibliography}
\end{document}